\newtheorem{theorem}{Theorem}[section]
\newtheorem{proposition}[theorem]{Proposition}
\newtheorem{lemma}[theorem]{Lemma}
\newtheorem{definition}[theorem]{Definition}
\newcommand{\R}{\mathbb R}
\newcommand{\cD}{\mathcal D}
\newcommand{\eps}{\varepsilon}
\newcommand{\dd}{\, \mathrm{d}}
\newcommand{\ww}{\langle w\rangle}
\newcommand{\vv}{\langle v\rangle}
\newcommand{\vvi}{\langle v_i\rangle}
\newcommand{\vvo}{\langle v_0\rangle}
\newcommand{\vvd}{\langle \cdot\rangle}
\newcommand{\vvp}{\langle v'\rangle}
\newcommand{\Qs}{Q_{\rm s}}
\newcommand{\Qns}{Q_{\rm ns}}
\newcommand{\AC}{\begin{figure}[t]
	\newcommand{\RAD}{1} %Radius of circle
	\newcommand{\SCAL}{.6*\RAD} % scaling factor related to perpendicular vectors
	\begin{tikzpicture}
	
		\coordinate (V) at (0,4*\RAD);
		\coordinate (VP) at (.6, -.35);
		
		\path let \p1=($(V) - (VP)$) in coordinate (VPerp) at (\y1,- \x1);
%		\coordinate (VPerp) at (1,1);

		% Draw the axes
		\draw[->] (0,0) -- (V)
			node[above left] {\scriptsize $v$};
		
		\draw[|-|, gray] (.1,0) -- (.1,\RAD - .025);% node[midway, left]{$\sfrac{|v|}{4}$};
		
		\fill (0,0) circle (.02) node[below left] {\scriptsize $0$};
%		\node[below left] at (0,0) {\tiny $0$};
		
		\draw[->, red] (0,0) -- (VP) node[below left] {\scriptsize$v'$};
%			\fill[red] (VP) circle (.02);
			
%		\fill (V) circle (.05) node[above left] {\scriptsize $v$};
			
		\draw[->, dashed, red] (VP) -- (V) node[midway, right] {\scriptsize$v-v'$};
%			\fill[red] (-.615,4.42) circle (.02);
			
%		\draw[dashed, blue] ($(0,0)- \SCAL*(VPerp)$) -- ($\SCAL*(VPerp)$)
%			node[below]{\scriptsize $(v-v')^\perp$};
			%(-4.42/2, -.615/2) -- (4.42/2,.615/2) 
		
		\draw[blue] ($(V) - \SCAL*(VPerp)$) -- ($(V) + \SCAL*(VPerp)$)
			node[below]{\scriptsize $v + (v-v')^\perp$};%(-4.42/2, -.615/2 +4) -- (4.42/2,.615/2 + 4) 
  
		% Draw the ball centered at x=0
		
		\draw (0,0) circle (\RAD);
	\end{tikzpicture}
	\begin{tikzpicture}
	
		\coordinate (V) at (0,4*\RAD);
		\coordinate (VP) at (.6, -.35);
		\path let \p1=($(V) - (VP)$) in coordinate (VPerp) at (\y1,- \x1);
		
		\coordinate (U1) at ($\SCAL*4.4*(1, .2582)$);
		\path let \p1=(U1) in coordinate (U2) at (\x1,- \y1);
		
%		\coordinate (VPerp) at (1,1);

		% Draw the axes
		\draw[->] (0,0) -- (V);
%			node[above left] {\scriptsize $v$};
		\draw[red,->] ($(V) + (.3,.25)$) arc (5:355:.3 and .15);
		
		\draw[|-|, gray] (.1,0) -- (.1,\RAD - .05) node[midway, right]{$\sfrac{|v|}{4}$};
		
		\fill (0,0) circle (.02) node[below left] {\scriptsize $0$};
%		\node[below left] at (0,0) {\tiny $0$};

		\fill[red!20] (V) -- ($(V) + (U1)$) -- ($(V)+(U2)$) -- cycle;
		\draw[red] (V) -- ($(V) + (U1)$);
		\draw[red] (V) -- ($(V)+(U2)$);
		\node[red] at ($(V) + .5*(U1) + .5*(U2)$) {$\mathcal{AC}$};

		\fill[red!20] (V) -- ($(V) - (U1)$) -- ($(V) - (U2)$) -- cycle;
		\draw[red] (V) -- ($(V) - (U1)$);
		\draw[red] (V) -- ($(V) - (U2)$);
		
%		\draw[red,->] ($(V) + .25*(U2) - (0,.1)$) to[bend left=45]  ($(V) - .25*(U1) - (0,.1)$);
					
		\draw[blue] ($(V) - \SCAL*(VPerp)$) -- ($(V) + \SCAL*(VPerp)$);
%			node[below]{\scriptsize $v + (v-v')^\perp$};%(-4.42/2, -.615/2 +4) -- (4.42/2,.615/2 + 4) 
  
		% Draw the ball centered at x=0
		
		\draw (0,0) circle (\RAD);
	\end{tikzpicture}
\caption{On the left, we draw a typical plane $v + (v-v')^\perp$ in which $v+w$ lives.  On the right, we draw our larger set $\mathcal{AC}$ (the shaded region) that contains all such planes.  We point out that these regions exist in three dimensions, so that, e.g., the shaded region $\mathcal{AC}$ is really the rotation of the figure above around the $z$-axis.}\label{f.AC}
\end{figure}}
\tikzset{offset/.style={to path={%
    -- ($(\tikztostart)!#1cm!(\tikztotarget)$)}},
         offset/.default=1}
\newcommand{\DI}{\begin{figure}[t]
	\newcommand{\RAD}{1} %Radius of B_{|v|/4}
	\newcommand{\SCAL}{.6*\RAD} % scaling factor related to perpendicular vectors
	\newcommand{\RI}{.135*\RAD} %Radius of B_{r_i}(v_i)
	\begin{tikzpicture}
	
		\coordinate (V) at (0,4*\RAD); %vector v
		\coordinate (VI) at (3.2*\RAD, 4.5*\RAD); %vector v_i
		
		%Find vector perpendicular to (V) - (VI)		
		\path let \p1=($(V) - (VI)$) in coordinate (VPerp) at (\y1,-\x1);
%		\coordinate (VPerpUnit) at [offset] (VPerp);

		\coordinate (VIT) at ($(VI) + .3*\RI*(VPerp)$); % location where tangent line going through v touches top of B_{r_i}(v_i)
		\coordinate (VIB) at ($(VI) - .3*\RI*(VPerp)$); % same as above but instead tangent line on bottom
		
		\path let \p2=($(V) - (VIT)$) in coordinate (VTPerp) at (\y2,-\x2);
%		\coordinate (VTPerpUnit) at [offset] (VTPerp);

		\path let \p3=($(V) - (VIB)$) in coordinate (VBPerp) at (\y3,-\x3);
%		\coordinate (VBPerpUnit) at [offset] (VBPerp);

		\node[below,red] at (VIB) {\tiny $\displaystyle B_{r_i}(v_i)$};

		\draw[red, fill=red!20] (VI) circle (\RI); %ADD LABLE!
%		\draw[blue, fill=blue] ($(VI) + .5*\RI*(VPerpUnit)$) circle (.1);
%		\draw[blue, fill=blue] ($(VI) - .5*\RI*(VPerpUnit)$) circle (.1);

		\draw[blue, dashed] (V) -- (VIT);
		\draw[blue, dashed] (V) -- (VIB);
		
		%FOR CENTERING REASONS
		\draw[opacity=0] (V) --($2*(V) - (VIB)$);%$.2*(V)$):
		\draw[blue, dashed] (V) -- ($(V)-1.75*\RAD*(VTPerp)$);
		\draw[blue, dashed] (V) -- ($(V)-1.75*\RAD*(VBPerp)$);
		
		\node[blue] at (.25,.25) {\tiny $\cD_i$};

		\begin{scope}
			\clip (0,0) circle (\RAD);
			\fill[blue!30] (V) -- ($(V)-1.75*\RAD*(VTPerp)$) -- ($(V)-1.75*\RAD*(VBPerp)$) -- cycle;
		\end{scope}
		
		\draw[dotted] (V) -- ($(V)-1.65*\RAD*(VPerp)$)
			node[below]{\tiny$P_i$};

		% Draw the axes
		\draw[->] (0,0) -- (V)
			node[above left] {\scriptsize $v$};
		
		\fill (0,0) circle (.02) node[below left] {\scriptsize $0$};
		
		\draw (0,0) circle (\RAD);
	\end{tikzpicture}
	\qquad\qquad\qquad
	\begin{tikzpicture}
	
		\coordinate (V) at (0,4*\RAD); %vector v
		\coordinate (VI) at (3.2*\RAD, 4.5*\RAD); %vector v_i
		
		%Find vector perpendicular to (V) - (VI)		
		\path let \p1=($(V) - (VI)$) in coordinate (VPerp) at (\y1,-\x1);
%		\coordinate (VPerpUnit) at [offset] (VPerp);

		\coordinate (VIT) at ($(VI) + .3*\RI*(VPerp)$); % location where tangent line going through v touches top of B_{r_i}(v_i)
		\coordinate (VIB) at ($(VI) - .3*\RI*(VPerp)$); % same as above but instead tangent line on bottom

		\path let \p2=($(V) - (VIT)$) in coordinate (VTPerp) at (\y2,-\x2);
		\path let \p3=($(V) - (VIB)$) in coordinate (VBPerp) at (\y3,-\x3);

		\draw[opacity=0] (VI) circle (\RI); %ADD LABLE!
%		\draw[blue, fill=blue] ($(VI) + .5*\RI*(VPerpUnit)$) circle (.1);
%		\draw[blue, fill=blue] ($(VI) - .5*\RI*(VPerpUnit)$) circle (.1);

		\draw[opacity=0] (V) -- (VIT);
		\draw[opacity=0] (V) -- (VIB);
		\draw[opacity=0] (V) -- ($(V)-1.75*\RAD*(VTPerp)$);
		\draw[opacity=0] (V) -- ($(V)-1.75*\RAD*(VBPerp)$);
		
		\node[blue] at (.95,.95) {\tiny $E\cD_i$};

		\begin{turn}{-9.55}
			\clip (0,0) circle (\RAD);
			\fill[blue!30] (V) -- ($(V)-1.75*\RAD*(VTPerp)$) -- ($(V)-1.75*\RAD*(VBPerp)$) -- cycle;
			\draw[dotted] (V) -- ($(V)-1.75*\RAD*(VPerp)$);
		\end{turn}
		
		\draw[opacity=0] (V) -- ($(V)-1.65*\RAD*(VPerp)$)
			node[below]{\tiny$P_i$};

		% Draw the axes
		\draw[opacity=0,->] (0,0) -- (V)
			node[above left] {\scriptsize $v$};
		
		\fill (0,0) circle (.02) node[below left] {\scriptsize $0$};
		
		\draw (0,0) circle (\RAD);
	\end{tikzpicture}
	\caption{On the left, the blue shaded region represents $\cD_i$.  It is clear from this picture that this region is nearly a disk, with each ``slice'' in the $y$-$z$ plane ($y$ being the vertical direction and $z$ being the direction extending into the page) being a circle.  On the right, we rotate $\cD_i$ by $E$ as in the proof of~\eqref{e.c100501}.} \label{f.cD}
\end{figure}}
\newcommand{\parameters}{\begin{figure}[t]
	\centering
	\begin{tikzpicture}
		%% ALL PARAMETERS WERE MULTIPLIED BY 2

		\fill[olive!10] (0,0) -- (0,2) -- (-6,2) -- (-6,0) -- cycle;
		
		%Moderately soft potentials
		\draw[red] (0,0) -- (-4,2) node[midway,below,sloped]{\tiny $\gamma+2s = 0$};
		\fill[red!10] (0,0) -- (-4,2) -- (0,2) -- cycle;

		%Condition gamma + s = 1/2
		\draw[blue] (0,1) -- (-1,2) node[midway, below, sloped]{\tiny $\gamma+s=\sfrac12$} ;
		\fill[blue!10] (0,1) -- (-1,2) -- (0,2) -- cycle;

		\draw[gray, thick, ->] (0,0) -- (0,2.5) 
			node[right,gray] {$s$};
		\draw[dotted, very thick] (-6,0) -- (0,0) --(0,2) -- (-6,2) 
			node[midway, above, gray]{\tiny Landau equation ($s=1$)} -- (-6,0);
		\draw[gray, thick, ->] (0,0) -- (-7,0) node[below left, gray]{$\gamma$};
		\draw[very thick] (-.125,2) -- (.125,2);
		\draw[very thick] (-6,-.125)--(-6,.125);
		\node at (.25,2) {\tiny $1$};
		\node at (.25,0) {\tiny $0$};
		\node at (0,-.3)  {\tiny $0$};
		\node at (-6.1,-.3) {\tiny $-3$};

	\end{tikzpicture}
	\caption{Above are the three parameter regimes that determine which hydrodynamic conditions in~\eqref{e.conditional2} are required in \Cref{t:decay2}.  The olive area is $\gamma + 2s <0$, the red is $\gamma + 2s > 0$ but $\gamma + s < \sfrac12$, and the blue region is $\gamma + s > \sfrac12$. As discussed below, the Landau equation arises when $s\to1$, which, in this parameter space, is the dotted line running horizontally along the top.}\label{f.parameters}
\end{figure}}
\newcommand{\pk}{p_0}
\newcommand{\vertiii}[1]{{\left\vert\kern-0.25ex\left\vert\kern-0.25ex\left\vert #1 
    \right\vert\kern-0.25ex\right\vert\kern-0.25ex\right\vert}}
\newcommand{\KOO}[3]{\vertiii{#1}_{#2,#3}}
\newcommand{\KO}[1]{\KOO{#1}{q_0}{\pk}}
\newcommand{\1}{\mathds{1}}
\newcommand{\be}{\begin{equation}}
\newcommand{\ee}{\end{equation}}
\Crefname{definition}{Definition}{Definitions}
\numberwithin{equation}{section}
\numberwithin{theorem}{section}
\title{Decay estimates and continuation for the non-cutoff Boltzmann equation}
\author{Christopher Henderson}
\address{Department of Mathematics, University of Arizona, Tucson, AZ 85721}
\email{ckhenderson@math.arizona.edu}
\author{Stanley Snelson}
\address{Department of Mathematical Sciences, Florida Institute of Technology, Melbourne, FL 32901}
\email{ssnelson@fit.edu}
\author{Andrei Tarfulea}
\address{Department of Mathematics, Louisiana State University, Baton Rouge, LA 70803}
\email{tarfulea@lsu.edu}
\thanks{CH was supported by NSF grant DMS-2204615 and acknowledges support of the Institut Henri Poincaré (UAR 839 CNRS-Sorbonne Université) and LabEx CARMIN (ANR-10-LABX-59-01). SS was supported by NSF grant DMS-2213407. AT was supported by NSF grant DMS-2108209.}
\begin{document}

\maketitle

\begin{abstract}
We consider the non-cutoff Boltzmann equation in the spatially inhomogeneous, soft potentials regime, and establish decay estimates for large velocity. In particular, we prove that pointwise algebraically decaying upper bounds in the velocity variable are propagated forward in time whenever the solution has finite weighted $L^\infty_{t,x} L^p_v$-norms for certain $p$.  The main novelty is that these estimates hold for {\em any} decay exponent above $\max\{2,3 + \gamma\} +2s$, where $\gamma$ and $s$ are standard physical parameters such that $\gamma \in (-3,0)$ and $s\in (0,1)$. Our results are useful even for solutions with mild decay.

As an application, we combine our decay estimates with recent short-time existence results to derive a continuation criterion for large-data solutions.  Compared to past results, this extends the range of allowable parameters and weakens the requirements on smoothness and decay in velocity of solutions.
\end{abstract}

\section{Introduction}
We consider the Boltzmann equation, a fundamental kinetic model from statistical physics. The unknown function $f(t,x,v)\geq 0$ represents the density of particles at time $t\geq 0$, location $x\in \R^3$, and velocity $v\in \R^3$. The equation reads
\begin{equation}\label{e.boltzmann}
	(\partial_t +v\cdot\nabla_x) f = Q(f,f),
\end{equation}
where the bilinear collision operator is defined for functions $f, g:\R^3\to \R$ by
\begin{equation}\label{e.Q}
 Q(f,g) := \int_{\R^3} \int_{\mathbb S^2} B(v-v_*,\sigma) [f(v_*')g(v') - f(v_*) g(v)] \dd \sigma \dd v_*.
 \end{equation}
For any deviation angle $\sigma \in \mathbb S^2$, the pre- and post-collisional velocities are related by the formulas
\be 
	v' = \frac{v+v_*} 2 + \frac{|v-v_*|} 2 \sigma,
	\qquad
	v_*' = \frac{v+v_*} 2 - \frac{|v-v_*|} 2 \sigma,
\ee
and the {\it non-cutoff} collision kernel $B$ is defined by
\be \label{e.B}
	B(v-v_*,\sigma) = b(\cos\theta) |v-v_*|^\gamma,
	\qquad \cos\theta = \sigma \cdot \frac{v-v_*}{|v-v_*|},
\ee
for some $\gamma \in (-3,1]$, where $b$ satisfies the bounds
\be 
c_b \theta^{-2-2s} \leq b(\cos\theta) \leq \frac 1 {c_b} \theta^{-2-2s}, 
\ee
for some $s\in (0,1)$ and $c_b>0$. In this paper, we assume 
\be
	\gamma \in (-3,0)
	\quad\text{ and }\quad s\in(0,1).
\ee
This is the full range of possible $s$, while $\gamma$ is restricted to the ``soft potentials'' case.

Our goal is to investigate the decay of solutions for large velocities. This is a classical topic in the study of the Boltzmann equation.  Its interest is due to its importance in establishing quantitative control on the nonlocal collision operator.  This is needed, for example, in order to prove regularity estimates or establish conservation of energy. 
Because the kinetic kernel $|v-v_*|^\gamma$ in \eqref{e.B} decays slowly, some decay of $f$ is needed even to evaluate $Q(f,f)$ pointwise. 

In the soft potentials regime ($\gamma<0$) considered here, it is well-understood that decay estimates are not self-generating, i.e. the decay of $f$ is essentially limited by the decay of the initial data. Therefore, the best result we can hope for is of the following form: if the initial data satisfies an estimate like $f_{\rm in}(x,v) \leq K \vv^{-q}$, then the solution also satisfies $f(t,x,v) \leq K'\vv^{-q}$ for some $K'$ depending on $K$ and some (hopefully mild) norms of $f$. When $\gamma > 0$, one expects a stronger result: that polynomial decay estimates hold regardless of the initial data. We do not consider this case in the current article, because the proof strategy would necessarily be different.

All the prior decay estimates for the regime we consider (non-cutoff, spatially inhomogeneous, far from equilibrium) apply only for large polynomial decay exponents \cite{imbert2018decay, cameron2020boltzmann}. In the $\gamma< 0$ case, such decay estimates require a strong assumption on the decay of the initial data, because of the issue mentioned in the previous paragraph. By contrast, the decay estimates in the present article hold for any exponent larger than $\max\{3+ \gamma,2\}+2s$ and therefore apply to solutions whose initial data might decay quite slowly.  Our decay estimates also seem to be the first that cover the parameter regime $\gamma+2s<0$ for inhomogeneous, large-data solutions.

We are also motivated by the {\it continuation of large-data solutions}. While the open question of unconditional large-data global existence for \eqref{e.boltzmann} is likely out of reach with current techniques, in recent years there has been interesting partial progress in the form of continuation criteria, most notably \cite{imbert2020smooth} which established that, in the case $\gamma+2s\in [0,2]$, solutions with infinite order of decay in $v$ can be continued for as long as the mass, energy, and entropy densities are bounded above, and the mass density is bounded below, uniformly in $t$ and $x$. Later, the present authors showed in \cite{HST2020lowerbounds} that one can discard the lower mass bound and upper entropy bound from the criterion of \cite{imbert2020smooth}, i.e. that solutions can be continued as long as 
\be \label{e.mod-soft}
\int_{\R^3} f(t,x,v)\dd v \leq M_0 \quad \text{ and } \quad \int_{\R^3} |v|^2 f(t,x,v) \dd v \leq E_0,
\ee
uniformly in $t$ and $x$. Again, this criterion holds only for the regime $\gamma+2s\in [0,2]$ and requires $f_{\rm in}(x,\cdot)$ to have infinite order of decay.
Until now, no similar continuation criterion has been available for the $\gamma+2s<0$ case nor has the restriction on the decay in $v$ been relaxed. 
%In \Cref{c:cont} below \CH{(MAYBE WE SHOULD UPGRADE THIS TO A THEOREM?)}, we combine our decay estimates with recent short-time existence results to derive a continuation criterion for this remaining case. This new criterion is more restrictive than \eqref{e.mod-soft}, which is to be expected because the singularity $|v-v_*|^\gamma$ in \eqref{e.B}, a key source of difficulty, is more severe when $\gamma$ is more negative.  \CH{On the other hand, it requires only that the initial data satisfies $f_{\rm in}(x,v) \leq K \vv^{-q}$ {\em some} $q> 3+2s$.}

\subsection{Main results}

To state our results, let us introduce the following notation for weighted $L^p$-norms on $\R^3$: for $p \in [1,\infty]$ and $k \in \R$, define
\be
	\|f\|_{L^p_k(\R^3)} := \| \vv^k f\|_{L^p(\R^3)},
\ee
where $\vv = \sqrt{1+|v|^2}$.  When the $x$ variable is included, we abuse notation and write
\be
	\|f\|_{L^p_k(\R^6)} := \| \vv^k f\|_{L^p(\R^6)},
\ee
Similarly for when the $t$ and $x$ variables are included.

Our first main result says that decay estimates for the initial data are propagated forward to positive times, as long as some weaker norms of $f$ remain finite.

\begin{theorem}\label{t:decay1}
Let $f\geq 0$ be a classical solution to the Boltzmann equation~\eqref{e.boltzmann} on $[0,T]\times\R^6$, with the initial data $f_{\rm in}$ lying in $L^\infty_q(\R^6)$ for some $q> \max\{3 + \gamma,2\} + 2s$.  If $f$ satisfies the bounds
\be\label{e.hydro}
\begin{split}
\int_{\R^3} \vv^{(1+2\gamma+2s+\delta)_+} f(t,x,v) \dd v &\leq K_0,\\
\int_{\R^3} f^{\frac{3}{3+\gamma}+\eta} (t,x,v) \dd v &\leq G_0,\\
\int_{\R^3} \vv^{p(1+2\gamma+2s+\delta)_+} f^p(t,x,v) \dd v &\leq P_0, \quad \text{(only necessary if $\gamma + 2s < 0$)}, %\\
%\int_{\R^3} |v|^{\ell} f(t,x,v) \dd v &\leq L_0, \quad \text{(only necessary if $\gamma + s \geq 1/2$),}
%\|f(t,x,\cdot)\|_{L^1_{(1+2\gamma+2s+\delta)_+}(\R^3)}
\end{split}
\ee
uniformly in $(t,x) \in [0,T]\times\R^3$, for some $p>3/(3+\gamma+2s)$, some small $\delta,\eta>0$ and some constants $K_0$, $G_0$, and $P_0$, then one has
\be
	\|f\|_{L^\infty_q([0,T]\times \R^6)} \leq K,
\ee
for some $K>0$ depending only on $T$, $\|f_{\rm in}\|_{L^\infty_q(\R^6)}$, $K_0$, $G_0$,  $\gamma$, $s$, $c_b$, and (if $\gamma+2s< 0$) $P_0$ and $p$. %, and (if $\gamma+s \geq 1/2$) $L_0$ and $\ell$.
\end{theorem}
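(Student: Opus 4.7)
The plan is a pointwise comparison argument at a first-touching point, in the spirit of the decay and upper bound methods developed in \cite{imbert2018decay, cameron2020boltzmann}. Define the barrier
\[
\phi(t,v) := K(t) \langle v\rangle^{-q}, \qquad K(t) := 2\|f_{\rm in}\|_{L^\infty_q(\R^6)}\, e^{Lt},
\]
for some $L > 0$ to be chosen large. Assume for contradiction that $f > \phi$ somewhere on $[0,T]\times\R^6$. Then there is a first time $t_0 \in (0,T]$ and a point $(x_0, v_0) \in \R^6$ at which $f(t_0,x_0,v_0) = \phi(t_0, v_0)$ and $f \leq \phi$ on $[0, t_0] \times \R^6$. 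One can reduce to the case $|v_0| \geq R_0$ for $R_0$ large, the bound on a ball in $v$ following from propagation of the unweighted $L^\infty$ norm (which is itself a consequence of the hydrodynamic hypotheses). At the contact point, $\nabla_x f(t_0, x_0, v_0) = 0$ and $\partial_t f(t_0, x_0, v_0) \geq K'(t_0)\langle v_0\rangle^{-q}$, so~\eqref{e.boltzmann} reduces the problem to establishing the pointwise inequality
\[
Q(f,f)(t_0, x_0, v_0) < K'(t_0) \langle v_0\rangle^{-q}.
\]

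Next, I would pass to the Carleman representation and split $Q(f,f)(v_0) = \Qs(f,f)(v_0) + \Qns(f,f)(v_0)$ into a singular-integral part and a non-singular remainder. The singular part takes the form
\[
\Qs(f,f)(v_0) = \int_{\R^3} K_f(v_0, v')\,[f(v') - f(v_0)]\, \dd v',
\]
where $K_f(v_0, v')$ encodes an integral of $f$ over the plane $v_0 + (v_0 - v')^\perp$ drawn in Figure \ref{f.AC}. Since $f-\phi$ attains its zero maximum at $(x_0, v_0)$, one replaces $f(v') - f(v_0)$ by $\phi(v') - \phi(v_0)$ in the integrand and exploits the concavity of $v \mapsto \langle v\rangle^{-q}$. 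Combined with a pointwise lower bound for $K_f$ over the cone of non-degeneracy $\mathcal{AC}$ (the shaded region in Figure \ref{f.AC}), this would give the coercive estimate
\[
\Qs(f,f)(t_0, x_0, v_0) \leq -c_0\, |v_0|^{\gamma+2s} \phi(t_0, v_0),
\]
for some $c_0 > 0$ depending only on $K_0$ and $G_0$. The purpose of the $L^{3/(3+\gamma)+\eta}_v$ hypothesis is precisely to produce a quantitative lower bound on the $f$-averages inside $K_f$ without requiring any pointwise positivity of $f$.

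The non-singular part has the form $\Qns(f,f)(v_0) = f(v_0)\,g(v_0)$, with $g(v_0)$ a convolution of $f$ (with possible moment weights) against $|\cdot|^{\gamma+2s}$. When $\gamma+2s \geq 0$, the moment hypothesis in~\eqref{e.hydro} alone gives $|g(v_0)| \leq C\langle v_0\rangle^{\gamma+2s}$. When $\gamma+2s < 0$ the kernel $|\cdot|^{\gamma+2s}$ is singular at the origin and must be controlled by H\"older's inequality against the weighted $L^p$ hypothesis; the exponent $p > 3/(3+\gamma+2s)$ is exactly what places $|\cdot|^{\gamma+2s}$ locally in the dual space $L^{p'}$, and the $\langle v\rangle^{p(1+2\gamma+2s+\delta)_+}$ weight governs the tail. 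In either case one obtains $|\Qns(f,f)(v_0)| \leq C\langle v_0\rangle^{\gamma+2s}\phi(t_0, v_0)$ with $C$ independent of $K(t)$. Combining with the previous step, for $|v_0|$ sufficiently large the coercive contribution from $\Qs$ dominates, and choosing $L$ large enough yields the strict inequality $Q(f,f)(t_0,x_0,v_0) < K'(t_0) \langle v_0\rangle^{-q}$, contradicting the contact.

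The main obstacle is the regime $\gamma+2s<0$. Here the standard moment bounds cannot close the estimate for $\Qns$, which forces the use of the weighted $L^p$ hypothesis with the exponent and weight both carefully matched to the singularity of $|\cdot|^{\gamma+2s}$; moreover, because the $f$-averages appearing in $K_f$ are not bounded below pointwise, the coercive constant $c_0$ must be quantified through integral information only, which is the reason the specific $L^{3/(3+\gamma)+\eta}_v$ bound is introduced. A secondary technical point is that the conclusion should hold for exponents $q$ arbitrarily close to $\max\{3+\gamma,2\}+2s$; the small slack parameters $\delta$ and $\eta$ in~\eqref{e.hydro} are included to leave exactly enough room for the H\"older and coercivity estimates above to close at this critical decay rate.
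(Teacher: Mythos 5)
Your overall framework — a barrier/comparison argument at a first-touching point — matches the paper's, but the mechanism you propose to close the contradiction is wrong, and fixing it would require genuinely different ideas.

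The central flaw is the claim that $\Qs(f,f)(v_0) \leq -c_0 |v_0|^{\gamma+2s}\phi(t_0,v_0)$ follows from a ``pointwise lower bound for $K_f$ over the cone of non-degeneracy $\mathcal{AC}$.'' The hypotheses \eqref{e.hydro} only supply \emph{upper} bounds on integrals of $f$; they give no quantitative lower bound on $f$ anywhere, so $K_f$ can be arbitrarily small in any direction and no coercive term is available. (The paper even singles this out as a feature: ``Theorem \ref{t:decay1} requires no lower bounds for $f$ or the initial data.'') You also misread the role of $\mathcal{AC}$ in Figure \ref{f.AC}: it is not a cone of non-degeneracy for a lower bound, but an anti-cone used to \emph{upper}-bound $K_f(v,v')$ for $v'$ near the origin, by showing that the hyperplanes $v+(v-v')^\perp$ only meet the region where $f$ has little mass. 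The paper's argument is \emph{linear}: at the crossing point one uses $f\leq g$ only to pass to $Q(f,f)\leq Q_s(f,g)+Q_{\rm ns}(f,g)$, bounds this quantity above by $C\,\vvd^{-q_0}\cdot(\text{hydrodynamic norms of }f)$ via the key estimate of Lemma \ref{l:Qs-pointwise-base}, and then chooses $\beta$ large in the barrier $g\sim e^{\beta t}\vvd^{-q_0}$ to produce the contradiction from the time derivative alone — no coercivity needed, at the price of a bound that grows in $t$. What you describe is instead a \emph{nonlinear} barrier argument in the style of the large-$q$ or Gaussian-decay results, which the paper explicitly contrasts with its approach in Section 1.3; that route does not work at the small decay exponents $q$ that are the point of this theorem.

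There are further concrete issues. The non-singular part has kernel $|\cdot|^\gamma$, not $|\cdot|^{\gamma+2s}$ (see \eqref{e.Qns}), which is why the condition $p>3/(3+\gamma)$ (not $3/(3+\gamma+2s)$) appears in the second hypothesis of \eqref{e.hydro}; the threshold $p>3/(3+\gamma+2s)$ in the third hypothesis comes from controlling the \emph{singular} term $\Qs$ in the very soft case, via the H\"older step hidden in the convolution Lemma \ref{l:convolution}. Your proposed barrier $K(t)\vvd^{-q}$ also does not decay slower than $f$ at infinity in $x$, so a first contact point need not exist; the paper adds the terms $(\eps\vv)^\alpha + \eps e^{\eps^2\langle x\rangle}$ precisely to force the contact point into a compact set before taking $\eps\to 0$. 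Finally, the claim that propagation of the unweighted $L^\infty$ norm ``is itself a consequence of the hydrodynamic hypotheses'' is not correct under \eqref{e.hydro} alone; boundedness is an assumption in the detailed version (Theorem \ref{t:decay-detailed}), and deriving it from hydrodynamic bounds requires the additional well-distributed lower bound on $f_{\rm in}$ used only in Theorem \ref{t:decay2}.
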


%We should note that Theorem \ref{t:decay1} requires no lower bounds for $f$ or the initial data. 
Theorem \ref{t:decay1} can be understood as a regularization, as it bounds a stronger norm $\|f(t,x,\cdot)\|_{L^\infty_q(\R^3)}$ in terms of the weaker norms in \eqref{e.hydro} plus the initial data. Regularization estimates for the non-cutoff Boltzmann equation almost always require some quantitative lower bounds for $f$. Heuristically, if $f$ is small, there are few particles to collide with, so the diffusive effects of the collision term will be insignificant. Therefore, it is notable that Theorem \ref{t:decay1} requires no lower bounds for $f$ or the initial data.

By contrast, our next main result weakens the conditional assumption on $f$ for positive times, at the cost of assuming suitable lower bounds for the initial data. We say the initial data $f_{\rm in}$ is {\it well-distributed with parameters $c_m, r, R>0$} if for all $x\in \R^3$, there exists an $(x_m,v_m) \in B_R(x,0)$ with 
\be\label{e.well}
	f_{\rm in} \geq c_m 1_{B_r(x_m,v_m)}.
\ee
Roughly, this condition says that for every physical location $x$ there are some particles of ``not too large'' velocity ``nearby.'' 
% at every location $x$ in physical space. \st{Morally, this condition means that every location in the $x$ domain is too far from some mass of $f$ that lies within relatively bounded velocities.}}
In the special case of spatially periodic initial data, \eqref{e.well} would follow from uniform positivity in some small ball in velocity. Our second result is as follows:

\begin{theorem}\label{t:decay2}
Let $f\geq 0$ be a classical solution to the Boltzmann equation~\eqref{e.boltzmann} on $[0,T]\times\R^6$, with the initial data $f_{\rm in}$ lying in $L^\infty_q(\R^6)$ for some $q> 3 + \gamma + 2s$. In addition, assume that the initial data is well-distributed with parameters $c_m, r, R$.   Fix $p>3/(3 + \gamma + 2s)$, $\ell > 2\gamma+2s+1$, and $k = \max\{1-3(\gamma+2s)/(2s), 1+2\gamma+2s+\delta\}$ for some small $\delta>0$.

If $f$ satisfies the bounds, for all $(t,x) \in [0,T]\times \R^3$,
\begin{equation}\label{e.conditional2}
\begin{split}
	\int_{\R^3} f(t,x,v) \dd v &\leq M_0,\\
	\int_{\R^3} |v|^2 f(t,x,v) \dd v &\leq E_0,\\
	\int_{\R^3} |v|^{k} f^{p} (t,x,v) \dd v &\leq P_0, \quad \text{(only necessary if $\gamma+2s\leq 0$),}\\
	\int_{\R^3} |v|^{\ell} f(t,x,v) \dd v &\leq L_0, \quad \text{(only necessary if $\gamma + s \geq 1/2$),}
\end{split}
\end{equation}
for some constants $M_0$, $E_0$, $P_0$, and $L_0$, then one has
\be
	\|f\|_{L^\infty_q([0,T]\times \R^6)} \leq K
\ee
for some $K>0$ depending only on $T$, $\|f_{\rm in}\|_{L^\infty_q(\R^6)}$, $c_m$,  $r$, $R$, $M_0$, $E_0$, $\gamma$, $s$, $c_b$, as well as (if $\gamma+2s\leq 0$) $P_0$ and $p$, and (if $\gamma + s \geq 1/2$) $L_0$ and $\ell$.
\end{theorem}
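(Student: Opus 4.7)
The plan is to derive Theorem~\ref{t:decay2} from Theorem~\ref{t:decay1} by verifying, under the hypotheses of Theorem~\ref{t:decay2}, the three hydrodynamic conditions in~\eqref{e.hydro}. A crucial ingredient is a pointwise lower bound on $f$ for positive times. Using the authors' prior work~\cite{HST2020lowerbounds} on spreading of mass, the well-distributed initial data combined with the mass and energy bounds $M_0$, $E_0$ propagates to a quantitative lower bound of the form $f(t,x,v) \geq c_0 \1_{B_{R_0}(v_0(t,x))}$. Such a bound ensures that the nonlocal collision term acts coercively, behaving at leading order like a fractional Laplacian in $v$ of order $2s$ with a quantitative constant.

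The first and third conditions of~\eqref{e.hydro} then follow by interpolation. For the first, a case split on $\gamma + s$ versus $\sfrac12$ is natural: if $\gamma + s \leq \sfrac12$ the weight $(1+2\gamma+2s+\delta)_+$ is at most $2$ (for $\delta$ small), so mass and energy suffice; if $\gamma + s > \sfrac12$, the moment bound $L_0$, assumed precisely in that regime with $\ell > 2\gamma + 2s + 1$, controls the large-$v$ part. For the third condition, needed only when $\gamma+2s<0$, the weight $k = \max\{1-3(\gamma+2s)/(2s),\,1+2\gamma+2s+\delta\}$ is engineered so that, splitting into $|v|\leq 1$ and $|v|>1$ and combining the assumed $\int |v|^k f^p \dd v \leq P_0$ with the mass bound, one recovers the required $\int \vv^{p(1+2\gamma+2s+\delta)_+} f^p \dd v$ bound for the same $p > 3/(3+\gamma+2s)$.

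The main obstacle is the second condition of~\eqref{e.hydro}, $\int f^{3/(3+\gamma)+\eta}\dd v \leq G_0$. Since $3/(3+\gamma)$ strictly exceeds the exponent $3/(3+\gamma+2s)$ coming from the weighted $L^p$ hypothesis of Theorem~\ref{t:decay2}, no H\"older interpolation among the available bounds can reach it directly. This is precisely where the lower bound is essential: the fractional-Laplacian coercivity enables a local-in-$v$ gain-of-integrability estimate applied to the Boltzmann equation, upgrading the available $L^p$-integrability of $f(t,x,\cdot)$ by roughly $2s$ orders via Sobolev embedding. A short check shows that this upgrade takes an exponent $p > 3/(3+\gamma+2s)$ to a new exponent $p^* > 3/(3+\gamma)$ — exactly what is required. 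With all three conditions of~\eqref{e.hydro} thereby verified, Theorem~\ref{t:decay1} applies and yields the conclusion.
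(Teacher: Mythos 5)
Your proposal correctly identifies that Theorem~\ref{t:decay1} is the target, that the lower bounds of Theorem~\ref{t:lower-bounds} (from \cite{HST2020lowerbounds}) are an essential input, and your treatment of the first and third conditions of~\eqref{e.hydro} is broadly in line with what actually works. But the argument for the second condition, $\int f^{3/(3+\gamma)+\eta}\,\dd v \leq G_0$, has a genuine gap, and it is the crux. You invoke a ``local-in-$v$ gain-of-integrability estimate'' based on fractional-Laplacian coercivity and Sobolev embedding, claiming it takes $p>3/(3+\gamma+2s)$ to $p^*>3/(3+\gamma)$. No such estimate is established anywhere in the paper, and proving one, uniformly in $(t,x)$, would require a substantial hypoelliptic or De Giorgi--Moser argument that is itself nontrivial. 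Worse, the claimed arithmetic is false: fractional Sobolev embedding in $\R^3$ gives $H^s\hookrightarrow L^{6/(3-2s)}$, so if $f^{p/2}\in H^s_v$ then $f\in L^{p^*}_v$ with $p^*=3p/(3-2s)$; at the borderline $p=3/(3+\gamma+2s)$ this yields $p^*=9/\big((3-2s)(3+\gamma+2s)\big)$, and one checks that $(3-2s)(3+\gamma+2s)=3(3+\gamma)-2s(\gamma+2s)>3(3+\gamma)$ precisely when $\gamma+2s<0$, so $p^*<3/(3+\gamma)$ in exactly the regime you care about. A single Sobolev gain falls strictly short. You also leave the regime $\gamma+2s\geq 0$ untouched: there $\eqref{e.conditional2}$ supplies no $L^p_v$ bound with $p>1$ at all, so there is no starting exponent to bootstrap from, yet the $L^{3/(3+\gamma)+\eta}_v$ condition in~\eqref{e.hydro} is still required.

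What the paper actually does is sidestep any gain-of-integrability: it proves a full pointwise $L^\infty$ bound, $\|f(t)\|_{L^\infty(\R^6)}\leq N(1+t^{-3/(2s)})$, via Proposition~\ref{p:Linfty} (a barrier argument adapted from \cite{silvestre2016boltzmann}, using the lower bounds of Theorem~\ref{t:lower-bounds} to supply the cone of nondegeneracy), and then interpolates that $L^\infty$ bound against the mass bound $M_0$ to obtain \emph{every} $L^p_v$-norm, including $L^{3/(3+\gamma)+\eta}_v$, in both parameter regimes. Because Proposition~\ref{p:Linfty} degenerates as $t\to 0$, the paper covers a short initial interval $[0,t^*]$ with $t^*\sim \|f_{\rm in}\|_{L^\infty_q}^{-1}$ separately, via Lemma~\ref{l:small-times}; your proposal omits this and, since Theorem~\ref{t:lower-bounds} also degenerates at $t=0$, your approach would face the same short-time issue. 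So the correct fix is not to strengthen a Sobolev gain, but to replace that step with the $L^\infty$ estimate of Proposition~\ref{p:Linfty}, interpolate with $L^1$, and patch the short-time window with Lemma~\ref{l:small-times}.
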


\parameters

Note that the lower bound in $p$ in Theorem \ref{t:decay2} is much less restrictive than the norm $L^{3/(3+\gamma)+\eta}_v$ in Theorem \ref{t:decay1}, which converges to $L^\infty_v$ as $\gamma \searrow -3$. 

Next, we have a continuation criterion, which follows from combining Theorem \ref{t:decay2} with the recent short-time existence result of \cite[Theorem~1.1]{HST2022irregular}:

\begin{theorem}\label{c:cont}
Let $f\geq 0$ be a solution to the Boltzmann equation~\eqref{e.boltzmann} on $[0,T]\times\R^6$, with the initial data $f_{\rm in}$ lying in $L^\infty_q(\R^6)$ for some $q>3+2s$. Assume that $f_{\rm in}$ is well-distributed with parameters $c_m$, $r$, $R$. 

If $f$ cannot be extended to a solution on $[0,T+\eps)\times\R^6$ for any $\eps>0$, then one of the inequalities in \eqref{e.conditional2} must degenerate as $t\nearrow T$.
\end{theorem}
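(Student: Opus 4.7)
My plan is to argue by contrapositive. Suppose all four inequalities in \eqref{e.conditional2} remain bounded uniformly for $t \in [0, T)$; I will construct an extension of $f$ to a strictly longer time interval, contradicting the hypothesis that no such extension exists. The extension mechanism is the short-time existence result \cite[Theorem 1.1]{HST2022irregular}, which, given initial data at some time $t_0$ with a suitable $L^\infty_q$ bound and a well-distributedness condition, produces a classical solution on $[t_0, t_0 + \tau]$ for some $\tau > 0$ depending quantitatively on those parameters. The goal then is to show that $f(t_0, \cdot, \cdot)$ satisfies the hypotheses of that theorem, uniformly in $t_0 \in [0, T)$.

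The first ingredient is a uniform upper bound. Since $q > 3 + 2s > 3 + \gamma + 2s$ (recall $\gamma < 0$), \Cref{t:decay2} applies and yields $\|f\|_{L^\infty_q([0, T) \times \R^6)} \leq K$, where $K$ depends only on $T$, the parameters in \eqref{e.conditional2}, and the well-distributedness parameters of $f_{\rm in}$. The second ingredient is the propagation of well-distributedness: for each $t_0 \in [0, T)$ and each $x \in \R^3$, one needs a ball in $(x,v)$-space on which $f(t_0, \cdot, \cdot)$ enjoys a quantitative pointwise lower bound, with parameters uniform in $t_0$. This should follow from a bump- or Gaussian-type lower bound argument driven by the uniform mass and energy bounds in \eqref{e.conditional2} together with the initial well-distributedness, in the spirit of the authors' earlier work \cite{HST2020lowerbounds}.

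With both the uniform upper bound and the uniform well-distributedness in hand, I would apply \cite[Theorem 1.1]{HST2022irregular} at time $t_0$, choosing $t_0$ close enough to $T$ so that $t_0 + \tau > T$. By uniqueness of classical solutions on the overlap $[t_0, T)$, the resulting solution agrees with $f$ there and extends it to $[0, t_0 + \tau]$, contradicting the hypothesis. In outline: (i) fix the parameters from \eqref{e.conditional2} and $f_{\rm in}$; (ii) invoke \Cref{t:decay2} to get $K$; (iii) invoke the lower-bound propagation to get uniform well-distributedness parameters $(c_m', r', R')$; (iv) read off $\tau = \tau(K, c_m', r', R') > 0$ from the short-time theorem; (v) restart at any $t_0 \in (T - \tau, T)$.

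The main obstacle is ensuring that the hypotheses of \cite[Theorem 1.1]{HST2022irregular} align precisely with what our estimates control, and in particular that the existence time $\tau$ depends only on quantities that remain uniform as $t_0 \to T$. A related subtlety is that the well-distributedness constants produced by lower-bound propagation must not degenerate as $t_0 \to T$; this should follow as long as the lower-bound argument is structured so that its constants depend only on $T$, the bounds in \eqref{e.conditional2}, and the initial parameters, and not on the particular time slice. Any residual gap — for instance, if the short-time theorem asks for a slightly stronger decay exponent than $q > 3 + 2s$ — would be closed by a brief compatibility check or a minor adjustment of the decay exponent in the hypothesis of the theorem.
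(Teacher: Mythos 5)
Your proposal is correct and follows essentially the same route as the paper: apply \Cref{t:decay2} for a uniform $L^\infty_q$ bound, propagate the well-distributedness via \Cref{t:lower-bounds}, and restart using \cite[Theorem 1.1]{HST2022irregular}. The only (harmless) cosmetic difference is that the paper restarts at $T$ itself rather than at $t_0 < T$, which is possible since $f$ is assumed to be a classical solution on the \emph{closed} interval $[0,T]$; it also notes that the existence time in the short-time theorem depends only on $q$ and $\|f_{\rm in}\|_{L^\infty_q}$, not on the well-distributedness parameters, so your $\tau(K,c_m',r',R')$ can be streamlined to $\tau(K,q)$.
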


%\stan{We could also prove a continuation criterion that does not require well-distributed initial data, but would need the conditional bounds from Theorem \ref{t:decay1} including the $L^\infty_{t,x}L^{3/(3+\gamma)+\eps}_v$ norm for all $\gamma, s$. I'm not sure if this is worth stating?}

As mentioned above, this appears to be the first continuation criterion for large-data solutions in the case $\gamma+2s<0$.  Additionally, \Cref{c:cont} is a new result because it applies under weaker assumptions than the continuation criterion of \cite{HST2020lowerbounds}, which applied to solutions that are smooth, periodic in $x$, and rapidly decaying in $v$.
If $\gamma+s \geq 1/2$ (see \Cref{f.parameters}), then \Cref{c:cont} contains an extra decay assumption (the last inequality in \eqref{e.conditional2}) that is not present in \cite{HST2020lowerbounds}. We believe this assumption is technical. %Indeed, sending $s\to 1$, we obtain the Landau equation (see, e.g.,~\cite[Section 1.7]{villani2002review}), whose continuation criteria \cite[Theorem~1.3]{HST2020landau} requires no such extra condition.

Let us comment on the lower bound $p>3/(3+\gamma+2s)$ in Theorem \ref{t:decay2} and \Cref{c:cont}, which appears in the case $\gamma + 2s < 0$. 
If we send $s\to 1$ in this condition, we recover exactly the borderline exponent in the weakest known continuation criterion for the inhomogeneous Landau equation, which corresponds to the limit of the Boltzmann equation \eqref{e.boltzmann} as $s\to 1$. In more detail, the main result of \cite{snelson2023landau} says that, when $\gamma < -2$, solutions to Landau can be continued past time $T$ provided
\be
\int_{\R^3} f(t,x,v)\dd v \leq M_0, \quad \int_{\R^3} |v|^\ell f(t,x,v) \dd v \leq L_0, \quad \text{ and } \|f(t,x,\cdot)\|_{L^p_v(\R^3)} \leq P_0,
\ee
uniformly in $(t,x) \in [0,T]\times\R^3$, for some $p>3/(\gamma+5)$, some arbitrarily small $\ell>0$, and some constants $M_0, L_0, P_0$. This condition on $p$ also appears in continuation and regularity criteria for the space homogeneous Landau equation (see e.g. \cite{gualdani2017landau, ABDL, golding2023local}) and seems to be the sharpest condition available with current techniques. Based on this comparison with the Landau equation, we believe that removing the higher $L^p_v$ bound from our condition \eqref{e.conditional2}, or even improving the lower bound $p>3/(3+\gamma+2s)$, would be a difficult task. By contrast, removing the weight $|v|^k$ from the $L^p$ norm in \eqref{e.conditional2} is most likely within reach, although still nontrivial.

\subsection{Proof ideas}

The key step in our proof is an upper bound for the ``singular'' part of the collision operator. To explain what this means, let us recall the Carleman decomposition, which allows one to write $Q(f,g)$ as the sum of two terms:
\begin{equation}\label{e.QsQns}
Q(f,g) = Q_{\rm s}(f,g) + Q_{\rm ns}(f,g).
\end{equation}
The singular term $Q_{\rm s}$ is an integro-differential operator of order $2s$, given by
\begin{equation}\label{e.Qs}
\begin{split}
Q_{\rm s}(f,g) &= \int_{\R^3} K_f(v,v') [g(v') - g(v)]\dd v',\\
K_f(v,v') &= |v-v'|^{-3-2s} \int_{w\in (v'-v)^\perp} f(v+w) |w|^{\gamma+2s+1} A(v,v',w)\dd w,
\end{split}
\end{equation}
where the function $A$ is uniformly positive and bounded, i.e. $A(v,v',w) \approx 1$, with implied constants depending only on $\gamma$, $s$, and $c_b$. 
The non-singular term $Q_{\rm s}$ can be written as
\begin{equation}\label{e.Qns}
Q_{\rm ns}(f,g) = C g(v) \int_{\R^3} f(v+z) |z|^\gamma \dd z,
\end{equation}
where $C>0$ is a constant depending only on $\gamma$, $s$, and $c_b$. We refer to \cite{alexandre2000noncutoff, alexandre2000entropy, silvestre2016boltzmann} for proofs of the formulas \eqref{e.Qs} and \eqref{e.Qns}.

Our key estimate, Lemma \ref{l:Qs-pointwise-base}, is an upper bound for $\Qs(f,g)$ with $g(v) = \vv^{-q}$. This bound takes the following form:  for any $q\geq \min\{3+\gamma+2s, 2+2s\}$,
\be\label{e.Qs-est}
\Qs(f,\langle \cdot\rangle^{-q})  \lesssim \vv^{-q} 
\begin{cases}
\|f\|_{L^1_{q-2+\gamma}(\R^3)}, &\gamma+2s\geq 0,\\
\|f\|_{L^1_{q-2+\gamma}(\R^3)} + \|f\|_{L^p_{q-2+\gamma}(\R^3)}, &\gamma+2s<0,
\end{cases}
\ee
where $p>3/(3+\gamma+2s)$. %This estimate, Lemma , may be of independent interest.

Let us briefly discuss the difficulties of proving \eqref{e.Qs-est}. 
%Proving Lemma \ref{l:Qs-pointwise-base} is the most difficult part of this article. To indicate some of the obstructions, let us recall that $\Qs(f,g)$ can be written 
%\be
%\Qs(f,g) = \int_{\R^3} K_f(v,v') [g(v') - g(v)] \dd v',
%\ee
%where $K_f(v,v')$ is a kernel that is singular of order $2s$ and depends nonlocally on $f$. (See \eqref{e.Qs}.) %Estimating the tail of the $v'$ integral, i.e. $\{v' : |v'|\geq |v|/4\}$ in \eqref{e.Qs-est} follows in a relatively standard way from known upper bounds satisfied by $K_f(v,v')$ and a Taylor expansion of $g$ when $v' \sim v$. The region near $v'=0$ 
%
%
%
Looking at the form of $\Qs(f,g)$ in \eqref{e.Qs}, we can see that when $|v'|$ is large, one can take advantage of the smallness of $g(v') = \langle v'\rangle^{-q}$ to bound the tail of the integral. In the region where $|v'|$ is small, the term $\langle v'\rangle^{-q}$ is approximately 1, so to get a good upper bound, one must instead use the smallness of the kernel $K_f(v,v')$, which is much more subtle.  Heuristically, if $|v|$ is much larger than $|v'|$, $v - v'$ is almost parallel to $v$.  Since $f\in L^1_{q-2+\gamma}(\R^3)$, most of its mass is located near the origin.  This implies that the hyperplane $v + (v-v')^\perp$ defining $K_f(v,v')$ (recall \eqref{e.Qs}) is located far from the origin and, thus, only intersects regions where $f$ is small,  making $K_f(v,v')$ small.
%then since most of the mass of $f\in L^1_{q-2+\gamma}(\R^3)$ is located near the origin,  the vector $v'-v$ is likely to be almost parallel to $v$ in regions where $f(v')$ is large. 
%This implies the hyperplane $v + (v-v')^\perp$ in the integral defining $K_f(v,v')$ (see \eqref{e.Qs}) does not usually come close to the region near $0$ where $f$ has more mass.  This can be exploited to show $K_f(v,v')$ is small.   
See Figure \ref{f.AC} in Section \ref{s.main_lemma}. 
%In some sense, the difficulty of Lemma \ref{l:Qs-pointwise-base} is to find the weakest norm of $f$ that can control all three of the following parts of the integral: (i) the singularity at $v'\sim v$, (ii) the region with $v'\sim 0$, and (iii) the unbounded tail.  We note that (ii) is the most challenging region to bound for large $|v|$, because we want to obtain a factor of $\vv^{-q}$ in the final estimate, but the  function $g(v') = \langle v'\rangle^{-q}$ is approximately 1 and gives no benefit. Therefore, we must use the smallness of $K_f(v,v')$ when $v$ is much larger than $v'$, which 
%heuristically follows from the fact that  ``most'' of the mass of $f\in L^1_{q-2+\gamma}(\R^3)$ is near zero, so $v'-v$ is ``usually'' almost parallel to $v$, so the hyperplane $v + (v-v')^\perp$ defining $K_f(v,v')$ (see \eqref{e.Qs}) does not ``usually'' come close to the region near $0$ where $f$ is large. 
A difficulty in quantifying this heuristic is that the integral defining $K_f$ is on a hyperplane and, thus, cannot be controlled using the $L^1_{q-2+\gamma}$-norm of $f$.  We can, however, make this precise in an averaged sense by applying a delicate localization method that is well-suited to the nonlocal dependence of $K_f(v,v')$ on $f$.

  It is nontrivial to make this heuristic picture precise and quantitative using only the $L^1_{q-2+\gamma}$-norm of $f$ to control an integral on a lower dimensional hyperplane. To do this, we apply a delicate localization method that is well-suited to the nonlocal dependence of $K_f(v,v')$ on $f$.

Armed with \eqref{e.Qs-est}, we prove decay estimates for $f$ using a barrier argument with barriers of the form $e^{\beta t} \vv^{-q}$. 
The crucial fact is that this argument gives a small gain of decay, i.e. we can bound an $L^\infty_q$ norm of $f$ in terms of a slightly weaker $L^\infty_{q'}$ norm of $f$, and the initial data. Iterating this gain, we eventually obtain a bound for any polynomially-weighted $L^\infty$ norm of $f$ that is finite at $t=0$. This bootstrapping procedure is partially inspired by our work in \cite{HST2022irregular}.

Once Theorem \ref{t:decay1} is available, we can combine it with the $L^\infty$-estimate of \cite{silvestre2016boltzmann} and the lower bounds of \cite{HST2020lowerbounds} to quickly obtain Theorem \ref{t:decay2}. Finally, \Cref{c:cont} follows from combining  Theorem \ref{t:decay2} with the short-time existence result \cite[Theorem~1.1]{HST2022irregular} with $L^\infty_q$ initial data.

\subsection{Comment about linear vs. nonlinear barrier arguments}

The barrier arguments that we use are {\it linear} in the following sense: if $f$ is a solution and $g$ is an upper barrier, then at a crossing point of $f$ and $g$, one has $Q(f,f) \leq Q(f,g)$ by the positivity properties of the collision operator. We then use a bilinear estimate of the form \eqref{e.Qs-est} and the finiteness of the appropriate norms of $f$, to bound this term from above and derive a contradiction.  This should be contrasted with {\it nonlinear barrier arguments} as seen in \cite{silvestre2016boltzmann, imbert2018decay}, where one additionally uses the fact that $f\leq g$ throughout all of $\R^3_v$ at the first crossing time, to estimate the kernel $K_f(v,v')$. In other words, the inequality $f\leq g$ is used to bound both copies of $f$ appearing in $Q(f,f)$, which results in a nonlinear dependence on the barrier $g$ in the final upper bound.

Our intuition is the following: nonlinear barrier arguments usually apply only for high rates of decay (Gaussian or high-degree polynomial) and give estimates that do not degenerate for large time, while linear barrier arguments hold for both slow and fast decay, but often result in estimates that degenerate for large time. The same dichotomy appears in the study of the Landau equation: see the linear barrier arguments applied in \cite{cameron2017landau, HST2020landau, S2018hardpotentials} and the nonlinear barrier arguments in \cite{snelson2023landau}.

\subsection{Related work}

As mentioned above, velocity decay estimates for the Boltzmann equation are a classical topic. Here, we give a brief and non-exhaustive overview of some important results.

Early work focused on the space homogeneous, cutoff Boltzmann equation. For polynomial $L^1$ moments in this setting, see \cite{ikenberry1956kinetic, elmroth1983boltzmann, desvillettes1993moments, wennberg1996povzner, lu1999boltzmann} and the references therein. %Many of these works made use of Povzner identities (see \cite{povzner}).
For exponentially-weighted $L^1$ estimates, see \cite{bobylev1997moment, mischler2006cooling, alonso2013exponential}. Regarding pointwise decay estimates, we refer to \cite{carleman1933boltzmann, arkeryd1983boltzmann} for results on polynomial decay, and \cite{gamba2009upper} for pointwise exponential decay, still in the homogeneous, cutoff setting. Later, \cite{gamba2019exponential} extended pointwise exponential decay estimates to the non-cutoff, homogeneous equation.

For the spatially inhomogeneous Boltzmann equation, see \cite{gualdani2017htheorem} for polynomial decay estimates with a hard-spheres collision kernel. The result of Imbert-Mouhot-Silvestre \cite{imbert2018decay} is more similar to ours because it addressed the inhomogeneous, non-cutoff Boltzmann equation. Working in the case $\gamma+2s\in [0,2]$, the authors of \cite{imbert2018decay} established polynomial decay estimates $f\lesssim \vv^{-q}$ for sufficiently large, non-explicit $q$, and these estimates do not degenerate for large time (conditional on some assumptions for the solution $f$). By contrast, our decay estimates hold for any $q$ larger than a small constant, but they grow with time.  The result of \cite{cameron2020boltzmann} extended the analysis of \cite{imbert2018decay} to the case $\gamma+2s>2$. The current article provides the only estimates of this type for the non-cutoff equation in the regime $\gamma+2s< 0$.

\subsection{Notation}

In addition to the notation $\vv$ and $L^p_k$ introduced above, we use $A \lesssim B$ to denote $A \leq C B$ where $C$ is a constant allowed to depend on the global parameters $\gamma$, $s$, and $c_b$, and other constants when specified. %, constants $q$, $c_m$, $r$, $R$, $p$, $\ell$, $k$, $K_0$, $G_0$, $P_0$, $M_0$, $E_0$,  and $L_0$.}  
We write $A \approx B$ to mean $A\lesssim B$ and $B \lesssim A$.

By classical solution to~\eqref{e.boltzmann}, we mean $f \in C^{2s + \alpha}_\ell$ for some $\alpha>0$ and $f$ satisfies~\eqref{e.boltzmann} pointwise.  See \cite[Section~3]{imbert2020smooth} for more details on the kinetic H\"older space $C^{2s +\alpha}_\ell$.  We assume this regularity throughout the paper even when it is not mentioned.

\subsection{Organization of the paper}

In Section \ref{s:prelim}, we discuss some useful estimates for the collision operator. Section \ref{s:decay} establishes our first decay estimate, Theorem \ref{t:decay1}, taking for granted the key Lemma \ref{l:Qs-pointwise-base}. Section \ref{s:better-decay} establishes the second decay estimate Theorem \ref{t:decay2}, as well as the continuation criterion of \Cref{c:cont}, and Section \ref{s.main_lemma} contains the proof of Lemma \ref{l:Qs-pointwise-base}. Finally, Appendix \ref{s:conv} contains a technical estimate for convolutions.

\section{Preliminaries}\label{s:prelim}

%\subsection{Carleman decomposition}\label{s:carleman}

%To analyze the bilinear collision operator $Q(f,g)$, we use the Carleman decomposition, which is by now a standard tool in the study of the non-cutoff Boltzmann equation. 

%\subsection{Estimates for the collision operator}

%\begin{lemma}{\cite[Lemma 2.3]{imbert2020lowerbounds}}\label{l:Qs-pointwise}
%If $g:\R^3\to \R$ is bounded and $C^2$, then
%\be |Q_{\rm s}(f,g)| \leq C\left( \int_{\R^3} |w|^{\gamma+2s} f(t,x,v-w) \dd w\right) \|g\|_{L^\infty(\R^3)}^{1-s} \|D_v^2 g\|_{L^\infty(\R^3)}^s.\ee
%\end{lemma}

In this section, we recall some known estimates for the collision operator. They are all stated in the context of the Carleman decomposition \eqref{e.QsQns}.

First, we have an upper bound for the average of the kernel $K_f(v,v')$ on annuli centered at $v$:

\begin{lemma}{\cite[Lemma 4.3]{silvestre2016boltzmann}}\label{l:K-upper-bound}
Let $K_f(v,v')$ be defined as in \eqref{e.Qs}. For any $r>0$, 
\be
	\int_{B_{2r}\setminus B_r} K_f(v,v+w) \dd w
		\lesssim \left( \int_{\R^3} f(v+w)|w|^{\gamma+2s} \dd w\right) r^{-2s},
\ee
for an implied constant depending only on $\gamma$, $s$, and $c_b$. 
\end{lemma}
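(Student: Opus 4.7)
The plan is to unpack the definition of $K_f$ in \eqref{e.Qs}, exploit the fact that $|w|\approx r$ on the annulus $B_{2r}\setminus B_r$, absorb the bounded factor $A$, and then swap the order of integration using a standard ``hyperplane averaging'' identity that converts a double integral over (direction, hyperplane) into a single weighted volume integral on $\R^3$.

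To avoid a notational collision with the $w$ that appears inside the definition of $K_f$, let me temporarily call the integration variable in the lemma $z$. Substituting $v'=v+z$ into \eqref{e.Qs} gives $v'-v = z$, so
\begin{equation*}
	K_f(v,v+z) = |z|^{-3-2s}\int_{w\in z^\perp} f(v+w)|w|^{\gamma+2s+1} A(v,v+z,w)\dd w.
\end{equation*}
Since $|z|^{-3-2s}\approx r^{-3-2s}$ on the annulus and $A\lesssim 1$, the quantity we wish to bound is controlled by
\begin{equation*}
	r^{-3-2s}\int_{B_{2r}\setminus B_r}\int_{w\in z^\perp} f(v+w)|w|^{\gamma+2s+1} \dd w\dd z.
\end{equation*}

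The heart of the argument is the identity
\begin{equation*}
	\int_{S^2}\int_{w\in \sigma^\perp} h(w)\dd w \dd\sigma = 2\pi\int_{\R^3}\frac{h(w)}{|w|}\dd w,
\end{equation*}
valid for any nonnegative $h$. It follows by Fubini on the incidence set $\{(\sigma,w):\sigma\cdot w = 0\}\subset S^2\times\R^3$: for each fixed $w\neq 0$, the set of admissible $\sigma$ is a great circle of length $2\pi$, and the factor $|w|^{-1}$ emerges from comparing the polar Jacobians $\tau\dd\tau$ on the 2-plane $\sigma^\perp$ with $\tau^2\dd\tau$ on $\R^3$. Writing $z=\rho\sigma$ with $\rho\in(r,2r)$ and $\sigma\in S^2$ makes the inner integral depend on $z$ only through $\sigma$, so the $\rho$-integral contributes a factor $\approx r^3$ and the identity applied to $h(w)=f(v+w)|w|^{\gamma+2s+1}$ produces
\begin{equation*}
	\int_{B_{2r}\setminus B_r}\int_{w\in z^\perp} f(v+w)|w|^{\gamma+2s+1}\dd w\dd z \lesssim r^3 \int_{\R^3} f(v+w)|w|^{\gamma+2s}\dd w.
\end{equation*}
Multiplying by the prefactor $r^{-3-2s}$ yields the claimed $r^{-2s}$ bound.

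The only substantive step is verifying the hyperplane-averaging identity, and this is a standard coarea/Fubini computation rather than a genuine obstacle. Integrability of $|w|^{\gamma+2s}$ at the origin requires $\gamma+2s>-3$, which holds automatically for the parameters considered in this paper, so the right-hand side is finite whenever the stated weighted $L^1$ norm of $f$ is. Everything else is bookkeeping.
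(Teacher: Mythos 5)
Your proof is correct. Since the lemma is cited from Silvestre's paper rather than proven here, there is no in-text proof to compare against, but your argument — pull out $|z|^{-3-2s}\approx r^{-3-2s}$ and $A\lesssim 1$ on the annulus, separate $z=\rho\sigma$ so the radial integral contributes $\approx r^3$, then swap the order of $(\sigma,w)$ integration over the incidence manifold $\{\sigma\cdot w=0\}$ to convert $\int_{S^2}\int_{\sigma^\perp}h\,\dd w\,\dd\sigma$ into $2\pi\int_{\R^3}h(w)|w|^{-1}\dd w$ — is precisely the standard argument behind this estimate in \cite{silvestre2016boltzmann}, and the Jacobian bookkeeping ($\tau\,\dd\tau$ on the plane versus $\tau^2\,\dd\tau$ on $\R^3$) is exactly where the extra power $|w|^{-1}$ comes from, converting the $|w|^{\gamma+2s+1}$ inside $K_f$ into the $|w|^{\gamma+2s}$ in the conclusion.
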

The following equivalent statements can be proven by writing the integral over $B_r$ as a sum of integrals over $B_{r2^{-n}}\setminus B_{r2^{-n-1}}$ for $n=0,1,\ldots$ and applying Lemma \ref{l:K-upper-bound} for each $n$ (see, e.g., \cite[Lemma~2.2]{henderson2021existence}).
\begin{lemma}\label{l:K-upper-bound-2}
For any $r>0$, 
\begin{align}
	\int_{B_{r}} K_f(v,v+w) |w|^2 \dd w
		&\lesssim \left( \int_{\R^3} f(v+w)|w|^{\gamma+2s} \dd w\right) r^{2-2s},\\
	\int_{\R^3\setminus B_r} K_f(v,v+w) \dd w	
	&\lesssim \left( \int_{\R^3} f(v+w)|w|^{\gamma+2s} \dd w\right) r^{-2s},
\end{align}
for an implied constant depending only on $\gamma$, $s$, and $c_b$. 
\end{lemma}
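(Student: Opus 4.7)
The plan is to carry out exactly the dyadic decomposition suggested in the excerpt, applying Lemma~\ref{l:K-upper-bound} on each annulus and summing a geometric series.

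For the first inequality, I would write $B_r$ as the disjoint union (up to a null set) of the annuli $A_n := B_{r2^{-n}}\setminus B_{r2^{-n-1}}$ for $n=0,1,2,\ldots$. On $A_n$ one has $|w|^2 \leq (r2^{-n})^2$, so
\[
\int_{A_n} K_f(v,v+w)\,|w|^2\,\dd w
\;\leq\; (r2^{-n})^2 \int_{A_n} K_f(v,v+w)\,\dd w.
\]
Since $A_n = B_{2\rho_n}\setminus B_{\rho_n}$ with $\rho_n := r2^{-n-1}$, Lemma~\ref{l:K-upper-bound} applied at radius $\rho_n$ gives
\[
\int_{A_n} K_f(v,v+w)\,\dd w \;\lesssim\; \rho_n^{-2s} \int_{\R^3} f(v+w)|w|^{\gamma+2s}\,\dd w.
\]
Combining these yields an upper bound of order $r^{2-2s} \, 2^{-n(2-2s)}$ (up to a constant depending only on $s$) times the $f$-integral, and summing over $n \geq 0$ uses $2-2s > 0$ (since $s<1$) so the geometric series converges to a constant depending only on $s$. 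This gives the first bound.

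For the second inequality, the same strategy applies with the dyadic exhaustion reversed: write $\R^3\setminus B_r = \bigcup_{n\geq 0}(B_{r2^{n+1}}\setminus B_{r2^n})$ and apply Lemma~\ref{l:K-upper-bound} at radius $r2^n$ to each piece, obtaining a bound of order $(r2^n)^{-2s}$ times the $f$-integral. Summing in $n$ now uses $-2s < 0$ (since $s>0$) for convergence, yielding the $r^{-2s}$ factor.

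I do not anticipate a real obstacle: the argument is purely a dyadic decomposition, and the only thing to double-check is that the implied constants in Lemma~\ref{l:K-upper-bound} are uniform in the radius (which they are, by scaling) so that the geometric series produces a constant depending only on $\gamma$, $s$, and $c_b$. The only subtle point worth flagging in the write-up is that both endpoints of the range $s\in(0,1)$ matter: $s<1$ is what makes the first series converge, and $s>0$ is what makes the second series converge.
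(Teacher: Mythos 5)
Your proof is correct and is exactly the argument the paper indicates (a dyadic decomposition into annuli $B_{r2^{-n}}\setminus B_{r2^{-n-1}}$, respectively $B_{r2^{n+1}}\setminus B_{r2^n}$, with Lemma~\ref{l:K-upper-bound} applied on each and a geometric series summed using $s<1$ for the first bound and $s>0$ for the second). The paper simply states this in a sentence and defers details to a reference, so your write-up fills in precisely what was intended.
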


The following estimate for $Q_{\rm s}(f,g)$ is useful on domains that are bounded in velocity:

\begin{lemma}{\cite[Lemma 2.3]{imbert2020lowerbounds}}\label{l:Qs-interp}
For any $f,g$ such that the right-hand side is finite, there holds
\be
|Q(f,g)| \lesssim \left( \int_{\R^3} |w|^{\gamma+2s} f(t,x,v-w) \dd w \right) \|g\|_{L^\infty(\R^3)}^{1-s} \|D_v^2 g\|_{L^\infty(\R^3)},
\ee
for an implied constant depending only on $\gamma$, $s$, and $c_b$. 
\end{lemma}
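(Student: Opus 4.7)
The plan is to combine the Carleman decomposition \eqref{e.QsQns} with a near/far decomposition of the singular piece $\Qs(f,g)$ at a cutoff radius $r>0$ to be optimized at the end. Writing
\[
\Qs(f,g)(v) = \int_{|w|\leq r} K_f(v,v+w)\bigl[g(v+w) - g(v) - w\cdot\nabla g(v)\bigr]\dd w + \int_{|w|>r} K_f(v,v+w)\bigl[g(v+w)-g(v)\bigr]\dd w
\]
is legitimate because the kernel $K_f(v,v+w)$ is even in $w$ (the integration in \eqref{e.Qs} is over the hyperplane $(v'-v)^\perp$, which is invariant under $w\mapsto -w$), so the linear drift $w\cdot\nabla g(v)$ integrates to zero on $B_r$ and no boundary term is created by its insertion.

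For the near piece a second-order Taylor estimate gives $|g(v+w)-g(v)-w\cdot\nabla g(v)| \leq \tfrac12 \|D_v^2 g\|_{L^\infty}|w|^2$; combined with the first bound of \Cref{l:K-upper-bound-2}, this yields a contribution bounded by $\|D_v^2 g\|_{L^\infty}\bigl(\int f(v-z)|z|^{\gamma+2s}\dd z\bigr) r^{2-2s}$. For the far piece, $|g(v+w)-g(v)|\leq 2\|g\|_{L^\infty}$ together with the second bound of \Cref{l:K-upper-bound-2} gives $\|g\|_{L^\infty}\bigl(\int f(v-z)|z|^{\gamma+2s}\dd z\bigr) r^{-2s}$. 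The non-singular term $\Qns(f,g) = C g(v)\int f(v+z)|z|^\gamma\dd z$ requires additional care because its natural weight is $|z|^\gamma$ rather than $|z|^{\gamma+2s}$: I split the $z$-integral at $|z|=1$, estimate the exterior ($|z|\geq 1$) directly using $|z|^\gamma\leq |z|^{\gamma+2s}$ against $\|g\|_{L^\infty}$, and absorb the interior ($|z|<1$) part into the near piece of $\Qs$ by pairing it with the first-order Taylor term via a symmetric rewriting of the Carleman decomposition, so that the problematic local contribution is controlled by a bound of the same type as the near piece.

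Balancing the near and far contributions by choosing $r \sim (\|g\|_{L^\infty}/\|D_v^2 g\|_{L^\infty})^{1/2}$ and combining with the bounds above produces the claimed interpolation, with $\int f(v-w)|w|^{\gamma+2s}\dd w$ as the common prefactor. The main obstacle I anticipate is the clean reconciliation of the $|z|^\gamma$ weight native to $\Qns$ with the target $|z|^{\gamma+2s}$; once the Carleman splitting is organized to be compatible with the kernel bounds of \Cref{l:K-upper-bound-2}, the rest of the argument is a routine optimization in $r$.
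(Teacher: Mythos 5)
Your near/far split of $\Qs$ is correct and is the whole content of the lemma: the kernel $K_f(v,v+w)$ is even in $w$, so the linear Taylor term integrates out on the symmetric ball $B_r$; the second-order remainder together with the first estimate of \Cref{l:K-upper-bound-2} controls the near region by $\|D_v^2 g\|_{L^\infty}\bigl(\int f(v-w)|w|^{\gamma+2s}\dd w\bigr) r^{2-2s}$; the crude bound $|g(v+w)-g(v)|\le 2\|g\|_{L^\infty}$ and the second estimate of \Cref{l:K-upper-bound-2} control the far region by $\|g\|_{L^\infty}\bigl(\int f(v-w)|w|^{\gamma+2s}\dd w\bigr) r^{-2s}$; and minimizing in $r$ yields $\|g\|_{L^\infty}^{1-s}\|D_v^2g\|_{L^\infty}^s$ (note your optimization produces the exponent $s$ on the Hessian, which looks like a harmless typo in the statement as printed). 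This is exactly the estimate the paper actually uses, always for the singular part alone (see its invocation at the start of the proof of \Cref{l:preliminary_Qs}, where it is applied to $\Qs$). The lemma is imported by citation with no in-paper proof, so there is nothing to line up against, but your $\Qs$ argument is the standard one.

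The genuine gap is the $\Qns$ step. The interior piece $g(v)\int_{|z|<1} f(v+z)|z|^\gamma\dd z$ cannot be dominated by $\int_{\R^3} f(v-w)|w|^{\gamma+2s}\dd w$ with a universal constant, and no reorganization of the Carleman decomposition produces a cancellation that rescues this: for a unit bump $f$ of radius $\eps$ centered at distance $\eps$ from $v$, the first integral scales like $\eps^{\gamma}$ and the second like $\eps^{\gamma+2s}$, so the ratio diverges as $\eps\to 0$. Even more directly, take $g\equiv 1$: then $\|D_v^2 g\|_{L^\infty}=0$ so the claimed right-hand side vanishes, yet $Q(f,1)=\Qns(f,1)=C\int f(v-w)|w|^\gamma\dd w>0$ for any nontrivial $f\ge 0$. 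The weight $|z|^\gamma$ in $\Qns$ is not an artifact of a suboptimal splitting; it comes from the cancellation lemma applied already to $g(v)\bigl[f(v_*')-f(v_*)\bigr]$, and there is no further cancellation between $\Qs$ and $\Qns$ to exploit. The inequality is really an estimate for $\Qs$ only (as the label \texttt{l:Qs-interp} and every use of it in the paper confirm); you should drop the $\Qns$ discussion entirely rather than try to absorb it.
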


The following estimate in terms of weighted $L^\infty$ norms will be used in our gain-of-decay argument:

\begin{lemma}{\cite[Lemma 2.16]{HST2022irregular}}\label{l:Q-polynomial}
For any $q_1> 3 + \gamma + 2s$, let $f\in L^\infty_{q_1}(\R^3)$ be a nonnegative function, and choose $q_2\in [q_1, q_1-\gamma]$. (Recall that $\gamma < 0$.) Then there holds
\be 
	Q(f,\langle \cdot\rangle^{-q_2})(v)
		\lesssim \|f\|_{L^\infty_{q_1}(\R^3)}\vv^{-q_2}.
\ee
The implied constant depends on $q_1$, $q_2$, $\gamma$, $s$, and $c_b$. 
\end{lemma}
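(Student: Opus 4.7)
My plan is to treat the Carleman decomposition~\eqref{e.QsQns} one piece at a time, using only the pointwise bound $f(v+z)\le \|f\|_{L^\infty_{q_1}}\langle v+z\rangle^{-q_1}$ and the convolution estimate of Appendix~\ref{s:conv}, which for any $\alpha>-3$ gives
\be
(f*|\cdot|^\alpha)(v) \lesssim \|f\|_{L^\infty_{q_1}}\langle v\rangle^{\max\{\alpha,\,3+\alpha-q_1\}}.
\ee
The non-singular piece is then immediate: by~\eqref{e.Qns}, $\Qns(f,\langle\cdot\rangle^{-q_2})(v)=C\langle v\rangle^{-q_2}(f*|\cdot|^\gamma)(v)$, and the hypothesis $q_1>3+\gamma+2s>3+\gamma$ keeps the exponent above non-positive, delivering $\Qns \lesssim \|f\|_{L^\infty_{q_1}}\langle v\rangle^{-q_2}$.

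For $\Qs$, I would set $r:=\langle v\rangle/2$ and split the $v'$-integral at $|v-v'|=r$. On the local piece $|v-v'|<r$, Taylor-expand $g(v')-g(v)$ to second order about $v$. The first-order term vanishes by the symmetry of $K_f(v,v+\cdot)$ under $w\mapsto -w$ (the hyperplane in~\eqref{e.Qs} is invariant under this reflection, and the residual asymmetry of $A$ is handled in the principal value). The quadratic remainder is controlled by $\|D^2 g\|_{L^\infty(B_r(v))}\lesssim \langle v\rangle^{-q_2-2}$, the first half of Lemma~\ref{l:K-upper-bound-2}, and the convolution estimate with $\alpha=\gamma+2s$; combining the powers gives an exponent $-q_2+\max\{\gamma,\,3+\gamma-q_1\}\le -q_2$.

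The tail $|v-v'|\ge r$ is treated via $|g(v')-g(v)|\le g(v')+g(v)$. The $g(v)$ contribution is controlled by the second half of Lemma~\ref{l:K-upper-bound-2} together with the convolution estimate. For the $g(v')$ contribution, I would further subdivide by $|v'|$: when $|v'|\ge \langle v\rangle/2$ one has $\langle v'\rangle^{-q_2}\lesssim \langle v\rangle^{-q_2}$, reducing to the previous case. The main obstacle is the \emph{antipodal} region $|v'|<\langle v\rangle/2$, where $g(v')$ may be of order unity, so the smallness must come from $K_f$ itself. In this region $|v-v'|\approx \langle v\rangle$ and the hyperplane $v+(v-v')^\perp$ stays at distance $\gtrsim \langle v\rangle$ from the origin (cf.~Figure~\ref{f.AC}), so the rescaling $z=\langle v\rangle t$ in~\eqref{e.Qs}, together with the convergence condition $q_1>3+\gamma+2s$ for the resulting $t$-integral, yields $K_f(v,v')\lesssim \|f\|_{L^\infty_{q_1}}\langle v\rangle^{\gamma-q_1}$ throughout the antipodal region.

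Integrating this against $\langle v'\rangle^{-q_2}$ over $B_{\langle v\rangle/2}$ picks up at worst a factor $\langle v\rangle^{\max\{0,\,3-q_2\}}$, so the antipodal contribution is bounded by $\|f\|_{L^\infty_{q_1}}\langle v\rangle^{\gamma-q_1+\max\{0,3-q_2\}}$. This is at most $\|f\|_{L^\infty_{q_1}}\langle v\rangle^{-q_2}$ precisely when $q_1-q_2\ge \gamma$ (in the case $q_2>3$) --- which is the hypothesis $q_2\le q_1-\gamma$ --- or when $q_1\ge 3+\gamma$ (in the case $q_2\le 3$), which follows from $q_1>3+\gamma+2s$. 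Summing the three pieces completes the proof. I expect the geometric control of $K_f$ in the antipodal region to be the only delicate step, and it is also where the upper bound $q_2\le q_1-\gamma$ enters essentially.
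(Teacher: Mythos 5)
The paper does not reprove this lemma---it is cited verbatim from \cite[Lemma 2.16]{HST2022irregular}---so there is no in-text proof to compare against. Judged on its own terms, your argument is sound and, in spirit, is a simplified version of the strategy used later in the paper's proof of Lemma~\ref{l:Qs-pointwise-base}: the same three-region split of the $v'$-integral (singular near $v$, antipodal near $0$, and the remaining tail), with the non-singular piece handled by a straight convolution bound, the first-order term near $v$ killed by the symmetry $K_f(v,v+w)=K_f(v,v-w)$, the tail by Lemma~\ref{l:K-upper-bound-2}, and the antipodal region by the geometric observation that the plane $v+(v-v')^\perp$ lies at distance $\gtrsim\langle v\rangle$ from the origin. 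The principal difference from Lemma~\ref{l:Qs-pointwise-base} is that here you are handed the much stronger pointwise control $f\lesssim\|f\|_{L^\infty_{q_1}}\langle\cdot\rangle^{-q_1}$, which lets you bound $K_f(v,v')$ pointwise in the antipodal region by integrating $\langle v+w\rangle^{-q_1}|w|^{\gamma+2s+1}$ over the plane directly (convergent precisely when $q_1>3+\gamma+2s$), yielding $K_f\lesssim\|f\|_{L^\infty_{q_1}}\langle v\rangle^{\gamma-q_1}$. In the $\KO{\cdot}$-setting of Lemma~\ref{l:Qs-pointwise-base} no such pointwise bound is available, which is why that proof resorts to the density-of-simple-functions and $\mathcal{D}_i$-slab machinery you do not need here. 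You also correctly locate where the hypothesis $q_2\le q_1-\gamma$ is used essentially, namely in absorbing the antipodal contribution when $q_2>3$.

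Two small remarks. First, the convolution bound $(f*|\cdot|^\alpha)(v)\lesssim\|f\|_{L^\infty_{q_1}}\langle v\rangle^{\max\{\alpha,\,3+\alpha-q_1\}}$ needs the additional hypothesis $q_1>3+\alpha$ for convergence at infinity; this is satisfied in your two applications ($\alpha=\gamma$ and $\alpha=\gamma+2s$), but the blanket statement ``for any $\alpha>-3$'' is too strong as written. Second, the parenthetical ``residual asymmetry of $A$ handled in the principal value'' is an unnecessary hedge: $K_f(v,v+w)=K_f(v,v-w)$ is an exact identity (used explicitly in the paper's proof of Lemma~\ref{l:Qs-pointwise-base}), and you should simply invoke it.
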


Finally, we have an upper bound for $\Qns$. We omit the proof, which follows from \eqref{e.Qns} and standard convolution estimates.

\begin{lemma}\label{l:Qns}
For any $f:\R^3\to \R$ such that the right-hand side is finite, and any $p>3/(3+\gamma)$, the estimate
\be
	Q_{\rm ns}(f,g)(v)
		\lesssim g(v)\left(\|f\|_{L^1(\R^3)} +  \|f\|_{L^p(\R^3)}\right)
\ee
holds, with the implied constant depending on $p$, $\gamma$, $s$, and $c_b$. 
\end{lemma}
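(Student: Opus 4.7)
The plan is to start from the explicit formula \eqref{e.Qns}, which reduces the estimate to controlling the convolution-type quantity
\[
I(v) := \int_{\R^3} f(v+z)\,|z|^\gamma\,\dd z,
\]
uniformly in $v$ by $\|f\|_{L^1(\R^3)} + \|f\|_{L^p(\R^3)}$. Since $\gamma \in (-3,0)$, the weight $|z|^\gamma$ is integrable near the origin but not at infinity (it is bounded by $1$ outside the unit ball), so I would split $I(v) = I_{\rm near}(v) + I_{\rm far}(v)$ corresponding to $|z|\leq 1$ and $|z| > 1$.

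For $I_{\rm far}(v)$, the bound $|z|^\gamma \leq 1$ on $\{|z|>1\}$ combined with translation invariance of Lebesgue measure gives
\[
I_{\rm far}(v) \leq \int_{|z|>1} f(v+z)\,\dd z \leq \|f\|_{L^1(\R^3)}.
\]
For $I_{\rm near}(v)$, I apply Hölder's inequality with exponents $p$ and $p' = p/(p-1)$:
\[
I_{\rm near}(v) \leq \|f(v+\cdot)\|_{L^p(B_1)} \cdot \bigl\| |z|^\gamma \bigr\|_{L^{p'}(B_1)} \leq \|f\|_{L^p(\R^3)} \cdot \bigl\| |z|^\gamma \bigr\|_{L^{p'}(B_1)}.
\]
The weight lies in $L^{p'}(B_1)$ provided $\gamma p' > -3$, i.e. $p' < 3/|\gamma|$. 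Solving $p/(p-1) < 3/(-\gamma)$ for $p$ yields exactly the hypothesized condition $p > 3/(3+\gamma)$, so the constant $\| |z|^\gamma\|_{L^{p'}(B_1)}$ is finite and depends only on $p$ and $\gamma$.

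Combining the two bounds gives $I(v) \lesssim \|f\|_{L^1(\R^3)} + \|f\|_{L^p(\R^3)}$ with an implied constant depending on $p$ and $\gamma$, and multiplying by $g(v)$ (and the universal constant $C$ from \eqref{e.Qns} that depends only on $\gamma, s, c_b$) yields the claimed estimate. There is no genuine obstacle: the only thing to check carefully is the threshold $p > 3/(3+\gamma)$, which comes precisely from the requirement that $|z|^\gamma$ be locally in the dual exponent space, and this matches the hypothesis.
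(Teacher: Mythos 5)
Your proof is correct and is exactly the ``standard convolution estimate'' that the paper alludes to but omits: split $f * |\cdot|^\gamma$ at $|z|=1$, use $L^1$ for the far region where $|z|^\gamma\le 1$, and H\"older with exponent $p$ in the near region, where $|z|^\gamma\in L^{p'}(B_1)$ precisely when $p>3/(3+\gamma)$. Nothing to add beyond noting that the estimate is meaningful for $f\ge 0$ (or with $|f|$ in place of $f$), which is the setting in which the lemma is applied.
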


\section{Polynomial decay of solutions}\label{s:decay}

\subsection{The $q_0, p_0$ norm}

We begin by defining a norm $\KO{\cdot}$ that plays an important role in our estimates of the collision operator. This norm will be convenient for estimating certain convolutions involving a kernal like $|v|^{\gamma+2s}$ (see Lemma \ref{l:convolution} below). The definition of this norm depends on $\gamma \in (-3,0)$ and $s\in (0,1)$, as well as the value of the exponents $q_0$ and $\pk$. For functions $f$ defined on $\R^3$, define
%The $K$ here indicates that $\pk$ will be used to estimate the kernel $K_f$. 
%$\delta \in (0, 3+\gamma+2s)$ and define
%\be
%	p_\delta = \frac{3 - \delta}{3 - \delta + \gamma+2s}.
%\ee
%with $p_\delta'$ its conjugate exponent.  Then set
%\CH{$\delta$ is undefined here.  also I think we only need $k_0 = \gamma + 2s$ in the $\gamma+2s\geq 0$ case.}
%\begin{equation}
%k_0 = \left\lbrace
%\begin{split}
%&\gamma + 2s && \quad\quad \text{ if } \gamma < -1 \\
%&2\gamma + 2s + 1+ \delta && \quad\quad \text{ if } \gamma \geq -1
%\end{split}\right.
%\end{equation}
%%and
%\begin{equation}\label{eq:K0_definition}
%	\vertiii{f}_{\gamma,s}??? = \KO{f} := \left\lbrace
%	\begin{split}
%		& \| f \|_{L^1_{1+2\gamma+2s+\delta}(\R^3)}
%			&& \quad\quad \text{ if }
%				\gamma + 2s \geq 0, &&&\gamma + 1 \geq 0, 
%		\\& \| f \|_{L^1_{\gamma+2s}(\R^3)}
%			 && \quad\quad \text{ if }
%			 	\gamma + 2s \geq 0, &&&\gamma + 1 < 0, 
%		\\& \| f \|_{L^1(\R^3)} + \| f \|_{L^{\pk}_{1+\gamma}(\R^3)}
%			&& \quad\quad \text{ if } 
%				 \gamma +2s < 0, &&&\gamma+1 \geq 0,		
%		\\& \| f \|_{L^1(\R^3)} + \| f \|_{L^{\pk}(\R^3)}
%			&& \quad\quad \text{ if }
%				\gamma+2s < 0, &&&\gamma + 1 < 0.
%\end{split}\right.
%\end{equation}
%Then, for any $\delta \geq 0$, set \CH{PROBABLY DELTA SHOULD BE BAKED INTO THIS NOTATION}
\begin{equation}\label{eq:K0_definition}
	\KO{f}
	=
		%= \KO{f} := \left\lbrace
	\left\{\begin{split}
		& \| f \|_{L^1_{q_0 - 2 + \gamma}(\R^3)}
			&& \qquad \text{ if }
				\gamma + 2s \geq 0,
		\\
		& \| f \|_{L^1_{q_0 - 2 + \gamma}(\R^3)} + \| f \|_{L^{\pk}_{q_0 - 2 + \gamma}(\R^3)}
			&&\qquad \text{ if } \gamma +2s < 0.
		 %&&&\gamma + 1 \geq 0, 
%		\\& \| f \|_{L^1_{\gamma+2s}(\R^3)}
%			 && \qquad \text{ if }
%			 	\gamma + 2s \geq 0, &&&\gamma + 1 < 0, 
%		\\& \|f\|_{L^1_{q_0 - 2+\gamma}(\R^3)} + \| f \|_{L^{\pk}_{??? ?? 1+\gamma}(\R^3)}
%			&& \qquad \text{ if } 
%				 \gamma +2s < 0, &&&\gamma+1 \geq 0,		
%		\\& \| f \|_{L^1(\R^3)} + \| f \|_{L^{\pk}(\R^3)}
%			&& \qquad \text{ if }
%				\gamma+2s < 0, &&&\gamma + 1 < 0.
\end{split}\right.
\end{equation}
We choose a $\pk$ satisfying
\be\label{e.p0def}
	\pk > \frac{3}{3 + \gamma + 2s},
\ee
and this value for $\pk$ will remain fixed for the remainder of the paper. The assumptions on $q_0$ will be specified in the statement of each lemma or theorem below. By allowing $q_0$ to vary, we can state sharper versions of our lemmas, for the sake of potential future applications.

The norm $\KO{\cdot}$ is, of course, independent of $\pk$ in the moderately soft potentials case $\gamma+2s\geq 0$; however, we include $\pk$ in the subscript of $\KO{\cdot}$ in order to avoid using different notation for the two cases.  
%Let us note that the $q_0$ below will be taken to be
%\be\label{e.c100301}
%	q_0 = \begin{cases}
%		3 + \gamma + 2s + \delta
%			\qquad&\text{ whenever } \gamma \geq -1,\\
%		2+2s
%			\qquad&\text{ whenever } \gamma < -1.
%		\end{cases}
%\ee 
%Here $\delta$ is any (preferably small) positive constant. The lemma below does not require this precise value, so we prove it in generality.

When $f$ is, additionally, a function of $t$ and $x$, we abuse notation and write $\KO{f}$ to also refer to
\be
	\sup_{t,x} \KO{f(t,x,\cdot)}.
\ee

\subsection{Upper bounds for $Q_{\rm s}$}

We now state the key estimate on which the proof of our theorem depends.  It is a bound for $Q_{\rm s}(f,\cdot)$ applied to negative powers of $\vv$.

\begin{lemma}\label{l:Qs-pointwise-base}
Suppose that $q_0 \geq \min\{3 + \gamma + 2s, 2 + 2s\}$ and $f$ is a nonnegative measurable function. Then
\begin{equation}\label{eq:Qs-pointwise-base}
	Q_{\rm s}(f, \langle \cdot \rangle^{-q_0})(v)
		\lesssim \KO{f} \vv^{-q_0}.
\end{equation}
The implied constant depends on $\gamma$, $s$, $q_0$, and (when $\gamma+2s<0$) $p_0$.
\end{lemma}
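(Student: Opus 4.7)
The strategy is to split the $v'$-integral defining $\Qs(f, \langle\cdot\rangle^{-q_0})(v)$ into three pieces based on the geometry of $v'$ relative to $v$, treating the ``antipodal'' region $\{|v'|\le|v|/4\}$ separately via the Carleman representation \eqref{e.Qs}. For bounded velocities $|v|\le R_0$, the function $g:=\langle\cdot\rangle^{-q_0}$ and its second derivatives are uniformly bounded, so Lemma~\ref{l:Qs-interp} together with the easy pointwise bound $M(v):=\int f(v+w)|w|^{\gamma+2s}\,\dw \lesssim \KO{f}$ (valid on bounded sets) immediately yields the claim. Henceforth I take $|v|\ge R_0$, so that $\vv\approx|v|$.

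Set $r:=\vv/2$ and decompose $\Qs(f,g)(v) = I_1 + I_2 + I_3$, where $I_1$, $I_2$, $I_3$ integrate over $\{|v-v'|\le r\}$, $\{|v-v'|>r,\ |v'|\ge|v|/4\}$, and $\{|v'|\le|v|/4\}$ respectively. On the support of $I_1$ one has $\langle v'\rangle\approx\vv$, and a second-order Taylor expansion gives $|g(v')-g(v)|\lesssim|v-v'|^2\vv^{-q_0-2}$; Lemma~\ref{l:K-upper-bound-2} then yields $I_1\lesssim M(v)\vv^{-q_0-2s}$. On the support of $I_2$ one has $g(v)+g(v')\lesssim\vv^{-q_0}$, and the exterior tail estimate of Lemma~\ref{l:K-upper-bound-2} gives $I_2\lesssim M(v)\vv^{-q_0-2s}$ as well. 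The convolution lemma in Appendix~\ref{s:conv} then provides $M(v)\lesssim\KO{f}\vv^{\gamma+2s}$, and since $\gamma<0$ we obtain $I_1+I_2\lesssim\KO{f}\vv^{-q_0}$.

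The substantive difficulty is $I_3$. Write $v-v'=\rho\sigma$: the constraint $|v'|\le|v|/4$ forces $\sigma$ into a small spherical cap $\mathcal{C}_v$ about $v/|v|$ and $\rho\in(3|v|/4,5|v|/4)$. Inserting \eqref{e.Qs}, using $\rho\approx\vv$ and $\langle v-\rho\sigma\rangle^{-q_0}\le 1$, and handling the $g(v)$-part of $I_3$ exactly as in $I_2$, the task reduces to bounding
\[
\vv^{-2s}\int_{\mathcal{C}_v}\int_{\sigma^\perp}f(v+\eta)\,|\eta|^{\gamma+2s+1}\,\dd\eta\,\dd\sigma
\]
by $\KO{f}\vv^{-q_0}$. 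The plan is to apply Fubini on the 4-dimensional domain $\{(\sigma,\eta):\sigma\in\mathcal{C}_v,\ \eta\in\sigma^\perp\}$: for fixed $\eta\neq 0$, the set $\{\sigma\in\mathcal{C}_v:\sigma\cdot\eta=0\}$ is a circular arc whose 1D length can be computed explicitly, and is nonempty only for $\eta$ in the 3D wedge $\mathcal{AC}$ close to $v^\perp$ (Figure~\ref{f.AC}). The pushforward is a 3D integral over $\mathcal{AC}$ against an explicit Jacobian, and on $\mathcal{AC}$ the key geometric fact $|v+\eta|\gtrsim\vv$ ensures that $f(v+\eta)$ carries the weight $\langle v+\eta\rangle^{q_0-2+\gamma}\gtrsim\vv^{q_0-2+\gamma}$ from $\KO{f}$, providing the missing decay in $\vv$.

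The main obstacle is making this plane-averaging argument rigorous in the regime $\gamma+2s<0$: in that case, although $|\eta|^{\gamma+2s+1}$ is locally integrable against 2D Lebesgue measure on $\sigma^\perp$, the combination of the accumulated Jacobian from the Fubini step and the geometry of $\mathcal{AC}$ makes the estimate borderline, and no bound by $\|f\|_{L^1_{q_0-2+\gamma}}$ alone can close it. The remedy is H\"older's inequality on $\mathcal{AC}$, pairing $f$ in an $L^{p_0}$ norm against $|\eta|^{\gamma+2s+1}$ in its conjugate; the lower bound $p_0>3/(3+\gamma+2s)$ is precisely what makes the conjugate-exponent integral convergent, thereby introducing the $L^{p_0}_{q_0-2+\gamma}$ component of $\KO{\cdot}$ into the final bound and closing the estimate.
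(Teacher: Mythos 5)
Your decomposition into three regions matches the paper's (your $I_3$ over $\{|v'|\le|v|/4\}$ is the paper's $I_2$ over $R_0=B_{|v|/4}(0)$), and your treatment of the singular and outer regions via the Taylor expansion, \Cref{l:K-upper-bound-2}, and the convolution estimate is the same as the paper's $I_1,I_3$. The gap is in the hard term.

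By bounding $\langle v-\rho\sigma\rangle^{-q_0}\le 1$ \emph{before} applying Fubini over $(\sigma,\eta)$, you discard essentially all the gain available from the $v'$-integral. On $R_0$ the function $\langle v'\rangle^{-q_0}$ is comparable to $1$ only in a unit ball around the origin and decays like $|v'|^{-q_0}$ outside, so that $\int_{R_0}\langle v'\rangle^{-q_0}\dd v'\approx1$ (for $q_0>3$), vastly smaller than $|R_0|\approx\vv^3$. After the crude bound and your Fubini computation you are left with
\begin{equation*}
	I_3\ \lesssim\ \vv^{-2s}\int_{\R^3}f(v+\eta)\,|\eta|^{\gamma+2s}\,\ell(\eta/|\eta|)\,\dd\eta,
\end{equation*}
where $\ell(\omega)\le C$ is the $1$D measure of the arc $\{\sigma\in\mathcal C_v:\sigma\perp\omega\}$ and $\ell(\omega)/|\eta|$ is the pushforward density. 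This does not close: taking $f=\1_{B_1(v+e_1)}$ with $v=(0,0,|v|)$, one finds $\KO{f}\,\vv^{-q_0}\approx\vv^{\gamma-2}$, whereas the right-hand side above is $\gtrsim\vv^{-2s}$ (a positive fraction of $B_1(e_1)$ lies in the support of $\ell$ with $|\eta|\approx1$ and $\ell\approx1$). Since $\gamma<2-2s$ always, $\vv^{-2s}\gg\vv^{\gamma-2}$. So the estimate is off by a genuine positive power of $\vv$, already in the moderately soft range; H\"older on $\mathcal{AC}$ addresses a different (and less serious) borderline problem and does not repair this.

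The missing ingredient is the thin-slab phenomenon: if $f$ is concentrated near $v_i$ with $B_{r_i}(v_i)$ meeting $\mathcal{AC}$, the set of $v'\in R_0$ whose plane $v+(v-v')^\perp$ actually hits $B_{r_i}(v_i)$ is a slab of thickness $\sim|v|r_i/|v_i-v|$, not the whole ball, and the integral of $\langle v'\rangle^{-q_0}$ over that slab is controlled by its thickness because the decay of $\langle v'\rangle^{-q_0}$ absorbs the two in-slab directions; this is exactly~\eqref{e.c100501}. To import this into a Fubini framework you would have to keep $\langle v'\rangle^{-q_0}$ through the coarea step, replacing the spherical arc factor $\ell$ by the $2$D slice integral $\int_{\{v'\cdot\eta = v\cdot\eta\}\cap R_0}\langle v'\rangle^{-q_0}\approx\big\langle |v\cdot\eta|/|\eta|\big\rangle^{2-q_0}$ and then prove a multi-weight bound on $f$ involving both $|\eta|^{\gamma+2s+1}$ and this new weight. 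That is essentially the paper's estimate viewed through a different parametrization; the paper instead first reduces (by density, using the coarse \Cref{l:preliminary_Qs}) to the simple functions of \Cref{d:simple}, whose $v$-dependent radius constraint $r_i<|v-v_i|/|v|$ makes the slab geometry tractable ball by ball. Either route requires confronting the $\langle v'\rangle^{-q_0}$-weight head on; it cannot be dropped at the outset.
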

Despite its importance to our argument, we postpone its rather lengthy proof until we show how to leverage it to obtain \Cref{t:decay1}.  The proof is contained in \Cref{s:Qs-pointwise-base-proof}.

The next estimate relates to $\Qs$ applied to {\em positive} powers of $\vv$. % (cf. \Cref{l:Qs-pointwise-base}, which relates to {\em negative} powers).  
 It is almost certainly not sharp, and its role is purely technical and related to the choice of barrier.  %It is, however, necessary to our arguments due to the choice of barrier. 
Its proof is contained in \Cref{s.v-alpha}. 

%here relates to $\Qs(f,\cdot)$ applied to {\em positive} powers of $\vv$ (cf. \Cref{l:Qs-pointwise-base}, which relates to {\em negative} powers).  It is required due to the specific choice of our barrier.  

\begin{lemma}\label{l:v-alpha}
For any $\alpha\in (0,2s)$, $q_0 \geq \max\{3+ \gamma + 2s, 2+2s\}$,  and nonnegative measurable function $f$, we have %and $f\geq 0$, the estimate 
\be
\begin{split}
	Q_{\rm s}(f,\langle \cdot\rangle^\alpha)(v) 
& 	\lesssim \vv^\alpha \int_{\R^3} f(v+w) |w|^{\gamma+2s} \dd w \\
		&\lesssim  \vv^{\alpha - \theta_{\gamma,s}} \KO{f},
\end{split}
\ee
whenever the right-hand side is finite.  Here $\theta_{\gamma,s} = 2s - (\gamma+2s)_+> 0$.  The implied constants depend on $\alpha$, $\gamma$, $s$, and $c_b$.
\end{lemma}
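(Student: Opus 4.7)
The plan is to reduce to bounds on $K_f$ by exploiting the symmetry $K_f(v,v+w)=K_f(v,v-w)$, which follows directly from the Carleman form \eqref{e.Qs} once one notes that $(v'-v)^\perp$, the integration plane, is preserved by $v'\mapsto 2v-v'$ (and $A$ has this symmetry). This yields the symmetrized representation
\[
Q_{\rm s}(f,\langle\cdot\rangle^\alpha)(v) = \tfrac{1}{2}\int_{\R^3} K_f(v,v+w)\bigl[\langle v+w\rangle^\alpha+\langle v-w\rangle^\alpha-2\vv^\alpha\bigr]\dd w,
\]
which removes the singularity at $w=0$ at the cost of a second-order difference. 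I would then split the domain as $B_{\vv/2}$ and its complement.

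On the inner region $|w|\le \vv/2$, along the segment $[v-w,v+w]$ one has $\langle\xi\rangle\approx\vv$, so a second-order Taylor expansion together with $|D^2\langle\cdot\rangle^\alpha(\xi)|\lesssim\langle\xi\rangle^{\alpha-2}$ gives the pointwise bound $|\langle v+w\rangle^\alpha+\langle v-w\rangle^\alpha-2\vv^\alpha|\lesssim |w|^2\vv^{\alpha-2}$. Plugging this into the first estimate of \Cref{l:K-upper-bound-2} produces the contribution
\[
\vv^{\alpha-2}\cdot\vv^{2-2s}\int_{\R^3} f(v+w)|w|^{\gamma+2s}\dd w = \vv^{\alpha-2s}\int_{\R^3} f(v+w)|w|^{\gamma+2s}\dd w.
\]
On the outer region $|w|>\vv/2$ the symmetrized integrand is bounded crudely by $\vv^\alpha+|w|^\alpha$. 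The $\vv^\alpha$ piece is handled by the tail estimate in \Cref{l:K-upper-bound-2}, while the $|w|^\alpha$ piece is treated by a dyadic decomposition into shells $2^n\vv/2\le |w|<2^{n+1}\vv/2$ and applying \Cref{l:K-upper-bound} shell-by-shell. Each shell contributes $(2^n\vv)^{\alpha-2s}\int f(v+w)|w|^{\gamma+2s}\dd w$, and the geometric series $\sum_n 2^{n(\alpha-2s)}$ converges precisely because of the assumption $\alpha<2s$; this is exactly where the restriction $\alpha<2s$ enters. Collecting the three contributions gives the sharper intermediate bound $Q_{\rm s}(f,\langle\cdot\rangle^\alpha)(v)\lesssim \vv^{\alpha-2s}\int f(v+w)|w|^{\gamma+2s}\dd w$, which in particular implies the stated first inequality since $\vv\ge 1$.

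For the second inequality, the plan is to invoke the convolution estimate of Appendix \ref{s:conv} (effectively \Cref{l:convolution}), which bounds $\int f(v+w)|w|^{\gamma+2s}\dd w\lesssim \vv^{(\gamma+2s)_+}\KO{f}$ under the hypothesis $q_0\ge\max\{3+\gamma+2s,2+2s\}$; the $L^{p_0}$ component of $\KO{\cdot}$ handles the singular kernel near $w=0$ in the regime $\gamma+2s<0$. Composed with the sharper intermediate bound above, this yields $Q_{\rm s}(f,\langle\cdot\rangle^\alpha)(v)\lesssim \vv^{\alpha-2s+(\gamma+2s)_+}\KO{f}=\vv^{\alpha-\theta_{\gamma,s}}\KO{f}$, as claimed. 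The main obstacles I anticipate are verifying the symmetry of $K_f$ carefully (which is why we can symmetrize the difference and recover a second-order cancellation), and ensuring that the dyadic sum in the outer region is controlled uniformly in $v$—both are manageable, but the $\alpha<2s$ condition is essential and cannot be relaxed without a different barrier.
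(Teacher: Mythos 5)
Your proposal is correct and essentially mirrors the paper's proof: both exploit the symmetry $K_f(v,v+w)=K_f(v,v-w)$ to obtain second-order cancellation near $v'=v$, bound the far field by a dyadic application of \Cref{l:K-upper-bound} whose geometric series converges precisely because $\alpha<2s$, and finish with the convolution estimate of \Cref{l:convolution}. The only cosmetic differences are that the paper reuses the three-region split $R_{\rm sing}$, $R_0$, $R_\infty$ from \Cref{l:Qs-pointwise-base} (noting $I_2\le 0$ on $R_0$) and treats $|v|\ge 10$ and $|v|<10$ separately, whereas your two-region split $B_{\vv/2}(v)$ versus its complement handles all $v$ uniformly.
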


%\stan{Maybe put the proof here, since it's short?}

\subsection{Propagation of weighted $L^\infty$-norms}

We now leverage the above lemmas in an iteration scheme to yield our decay estimates.  This next lemma is to understand $L^\infty_{q_0}$ estimates when $q_0$ is ``close'' to the starting base case value. 

\begin{lemma}\label{l:base-estimate}
Let $\eta$ be any positive number, $q_0\geq \max\{3 + \gamma + 2s, 2+2s\}$, and
\be
	f\in L^\infty([0,T]\times \R^6)
\ee
be a nonnegative classical solution of the Boltzmann equation \eqref{e.boltzmann} on $[0,T]\times \R^6$ with the initial data satisfying
\be
 f_{\rm in} \in L^\infty_{q_0}(\R^6) .
\ee
Assume further that %$f\in L^\infty([0,T]\times\R^6)$ and that 
$\KO{f}$ as defined in \eqref{eq:K0_definition} is finite. 
Then $f \in L^\infty_{q_0}([0,T]\times\R^6)$, and 
\be
	\|f(t) \|_{L^\infty_{q_0}(\R^6)}
		\leq \|f_{\rm in}\|_{L^\infty_{q_0}(\R^6)} \exp(C t (\KO{f} + \|f\|_{L^\infty_{t,x}L^{3/(3+\gamma) + \eta}_v([0,T]\times\R^6)})),
\ee
where the constant $C>0$ depends only on $q_0$, $p_0$, $\eta$, $\gamma$, $s$, and $c_b$. 
\end{lemma}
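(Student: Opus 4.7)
The plan is a standard linear barrier argument.  Define the trial barrier
\begin{equation*}
g(t,v) := \|f_{\rm in}\|_{L^\infty_{q_0}(\R^6)} \, \exp(C t N) \, \vv^{-q_0},
\qquad N := \KO{f} + \|f\|_{L^\infty_{t,x} L^{3/(3+\gamma)+\eta}_v([0,T]\times \R^6)},
\end{equation*}
where $C>0$ (depending on $q_0$, $\pk$, $\eta$, $\gamma$, $s$, $c_b$) will be fixed large at the end.  The lemma's bound is equivalent to $f(t,x,v) \leq g(t,v)$ on $[0,T]\times \R^6$.  Since $g$ decays to $0$ as $|v|\to\infty$ while $f$ is only assumed bounded, I first work with the regularized barrier $g_\varepsilon(t,v) := g(t,v) + \varepsilon \vv^\alpha$ for a fixed $\alpha\in (0,2s)$ and $\varepsilon > 0$; its coercivity in $|v|$ (combined with $f\in L^\infty$) forces $f < g_\varepsilon$ at large velocities, uniformly in $(t,x)$.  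Once $f\leq g_\varepsilon$ is proved for every $\varepsilon>0$, sending $\varepsilon\to 0^+$ yields the claim.

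Let $t_0 := \inf\{t\in[0,T] : f(t,x,v)\geq g_\varepsilon(t,v)\text{ for some }(x,v)\}$, which is positive by the initial datum.  The coercivity above confines a maximizing sequence in $v$ to a bounded set.  For $x$ I use the standard penalization trick of adding $\delta\langle x\rangle^2$ to the barrier and sending $\delta\to 0^+$ after the fact; the error in the transport derivative is $2\delta v\cdot x$, which is small because the touching point satisfies $\delta|x|^2 \leq \|f\|_\infty$.  For brevity, assume the touching point $(t_0,x_0,v_0)$ is attained.  Since $g_\varepsilon$ is independent of $x$, we have $(\partial_t + v\cdot\nabla_x)(f-g_\varepsilon) \geq 0$ at $(t_0,x_0,v_0)$, so the Boltzmann equation gives
\begin{equation*}
Q(f,f)(t_0,x_0,v_0) \;\geq\; \partial_t g_\varepsilon(t_0,v_0) \;=\; CN\,A(t_0)\vvo^{-q_0},
\qquad A(t) := \|f_{\rm in}\|_{L^\infty_{q_0}(\R^6)} e^{CtN}.
\end{equation*}

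To bound $Q(f,f)$ from above at the touching point, I split $Q = \Qs + \Qns$ by Carleman.  Because $f(t_0,x_0,\cdot)\leq g_\varepsilon(t_0,\cdot)$ pointwise with equality at $v_0$, positivity of $K_f$ yields the linearization
\begin{equation*}
\Qs(f,f)(v_0) \;\leq\; \Qs(f,g_\varepsilon(t_0,\cdot))(v_0) \;=\; A(t_0)\,\Qs(f,\vvd^{-q_0})(v_0) + \varepsilon\,\Qs(f,\vvd^{\alpha})(v_0),
\end{equation*}
and Lemmas~\ref{l:Qs-pointwise-base} and~\ref{l:v-alpha} bound the two terms by $\KO{f}\,A(t_0)\vvo^{-q_0}$ and $\KO{f}\,\varepsilon\,\vvo^{\alpha-\theta_{\gamma,s}}$ respectively.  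For the non-singular part, $f(v_0)=g_\varepsilon(t_0,v_0)$ and Lemma~\ref{l:Qns} give $\Qns(f,f)(v_0) \lesssim g_\varepsilon(t_0,v_0)\cdot(\|f(t_0,x_0,\cdot)\|_{L^1_v} + \|f(t_0,x_0,\cdot)\|_{L^{3/(3+\gamma)+\eta}_v})$; a standard splitting of the $|z|^\gamma$-convolution into a near-field (handled via $L^{3/(3+\gamma)+\eta}_v$) and far-field (handled via the weighted $L^1$ moment of $\KO{f}$, using $f\in L^\infty$ if necessary to compensate for $q_0-2+\gamma<0$) controls this by $N \cdot g_\varepsilon(t_0,v_0)$.

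Combining, the touching-point inequality reads
\begin{equation*}
CN\,A(t_0)\vvo^{-q_0} \;\leq\; C_\star\Bigl(A(t_0)\vvo^{-q_0}\,N + \varepsilon\vvo^{\alpha-\theta_{\gamma,s}}\KO{f} + \varepsilon\vvo^{\alpha}N\Bigr)
\end{equation*}
for a universal $C_\star$.  Choosing $C > C_\star$ and then letting $\varepsilon\to 0^+$ yields a contradiction, so $f\leq g$.  The principal technical hurdles, both standard but not entirely trivial, are (i) the rigorous treatment of noncompactness in $x$ by the penalization sketched above, and (ii) recovering the $L^1_v$ control on the $|z|^\gamma$-convolution in $\Qns$ when the weight $q_0 - 2+\gamma$ in $\KO{f}$ is negative; (ii) is precisely where the ambient $f\in L^\infty$ assumption, rather than the norms appearing in $N$, enters.
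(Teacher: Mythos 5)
Your high-level plan---a linear barrier argument built on \Cref{l:Qs-pointwise-base}, \Cref{l:v-alpha}, and \Cref{l:Qns}, with a crossing-point analysis---is the same as the paper's. The genuine gap is in the form of your barrier. You set $g_\eps(t,v) = A(t)\vv^{-q_0} + \eps\vv^\alpha$ with $A(t) = \|f_{\rm in}\|_{L^\infty_{q_0}} e^{CtN}$, so the only favorable term coming from $\partial_t g_\eps$ at the crossing point is $CN\,A(t_0)\vvo^{-q_0}$; the regularizers $\eps\vv^\alpha$ and $\delta\langle x\rangle^2$ are time-independent and contribute nothing helpful to the left-hand side. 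Meanwhile the errors on the right include $\eps\vvo^\alpha\,N$ (from $\Qns(f,g_\eps)$, since $\Qns$ returns the full barrier) and $\eps\,\vvo^{\alpha-\theta_{\gamma,s}}\KO{f}$ (from $\Qs(f,\eps\vvd^\alpha)$). At a crossing point the only a priori control is $\eps\vvo^\alpha \leq \|f\|_{L^\infty}$, which permits $\vvo \sim \eps^{-1/\alpha}$; then $\eps\vvo^{\alpha}N \sim \|f\|_{L^\infty}N$ stays order one while $CN\,A(t_0)\vvo^{-q_0} \sim CN\,A(t_0)\eps^{q_0/\alpha}\to 0$. So the crossing-point inequality is not a contradiction for small $\eps$, and ``letting $\eps\to 0^+$'' is not available: you must first prove $f\leq g_\eps$ for each fixed $\eps$, which is exactly what fails. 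The paper sidesteps this by placing the whole triple $\vv^{-q_0}+(\eps\vv)^\alpha+\eps e^{\eps^2\langle x\rangle}$ inside a single factor $N e^{\beta t}$: differentiating in $t$ produces $\beta$ times each summand, and the choice $\beta \approx \KO{f}+\|f\|_{L^\infty_{t,x}L^{3/(3+\gamma)+\eta}_v}+\eps\|f\|_{L^\infty}^{1/\alpha}$ lets each favorable term absorb its matching error, with the $\|f\|_{L^\infty}$ correction carrying an extra $\eps$ that vanishes at the end.

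A secondary point: your proposed fix for controlling $\|f(t_0,x_0,\cdot)\|_{L^1_v}$ in the $\Qns$ bound via the ambient $f\in L^\infty$ does not work, since $L^\infty$ on $\R^3_v$ gives no $L^1$ control at infinity; the control must come from $\KO{f}$, which requires $q_0-2+\gamma\geq 0$. You are right to flag this as a point needing care, but the fix you suggest is not the right one, and in any case this is subordinate to the structural issue above.
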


\begin{proof}
For any small $\eps>0$, let
\be
	N = (1+\eps) \|f_{\rm in}\|_{L^\infty_{q_0}(\R^6)}.
\ee
Fix any $\alpha \in (0,2s)$, and let $\beta>0$ be a constant to be determined.  Define the barrier
\be
	g(t,x,v) = N e^{\beta t} \left( \vv^{-q_0} + (\eps\vv)^\alpha + \eps e^{\eps^2\langle x \rangle}\right).
\ee
Define the first crossing time as
\be\label{e.c092701}
	t_0 := \sup \{ \tilde t_0 \in [0,T] : f(t) < g(t) \quad\text{ for all } t \in [0,\tilde t_0]\}.
\ee
If $t_0 = T$, we are finished.  Hence, we assume that $t_0 < T$.

We now show that $t_0 > 0$ and that there is a crossing point.  Because $f\in L^\infty$ and $g$ grows as $(x,v)$ tend to infinity, there is $R_\eps$ such that $f < g$ on $B_{R_\eps}^c$.  It follows from continuity that there is $z_0 := (t_0,x_0, v_0)$ with
\be
	g(z_0) - f(z_0) = \min_{[0,t_0] \times \overline B_{R_\eps}} (g - f) = 0.
\ee
The last equality follows from the definition~\eqref{e.c092701} of $t_0$.  Since $N > \|f_{\rm in}\|_{L^\infty_{q_0}(\R^6)}$, the inequality $f < g$ holds at $t=0$.  We deduce that $t_0 > 0$.

By elementary calculus, we find
%Because $f$ is bounded and $g$ approaches $\infty$ as $|x| + |v|$ approaches $\infty$, continuity of $(\partial_t + v\cdot \nabla_x )f$ implies that $t_0 > 0$ and thathe first crossing point $z_0 := (t_0,x_0,v_0)$ must occur with $t_0$ strictly positive. At this first crossing point, we have
\begin{equation}\label{e.crossing}
\begin{split}
	&f(z_0) = g(z_0),
	\qquad (\partial_t + v_0\cdot\nabla_x) f(z_0) \geq (\partial_t + v_0 \nabla_x)g(z_0),
	\qquad\text{ and}
	\\& f(t_0,x_0,v) \leq g(t_0,x_0,v) \quad \text{ for all } v\in \R^3,%,\\
%(\partial_t + v\cdot\nabla_x) f(z_0) &\geq \partial_t g(z_0),%\\
% \nabla_x f(z_0) &= \nabla_x g(z_0),
\end{split}
\end{equation}
so that
\begin{equation}\label{e.contra}
\begin{split}
	(\partial_t + v_0 \cdot \nabla_x) g(z_0)
		\leq (\partial_t + v_0 \cdot \nabla_x) f(z_0) 
		= Q(f,f)(z_0).
\end{split}
\end{equation}
Our goal is now to bound the left-hand side from below and the right-hand side from above to obtain a contradiction with~\eqref{e.contra}.

A direct computation shows that the left-hand side of~\eqref{e.contra} is equal to
\be\label{e.c092702}
N e^{\beta t_0}\left[\beta  \left(\langle v_0 \rangle^{-q_0} + (\eps\langle v_0\rangle)^\alpha + \eps e^{\eps^2\langle x_0\rangle} \right) + \eps^3 v_0\cdot\frac {x_0} {\langle x_0\rangle}e^{\eps^2\langle x_0\rangle} \right]. 
\ee
To bound the last term in~\eqref{e.c092702} from below, note that, since $f(z_0) = g(z_0)$, one has
\be
	(\eps\langle v_0\rangle)^\alpha \leq \|f\|_{L^\infty},
	\quad\text{ which implies that }\quad
	|v_0|\leq \langle v_0\rangle \leq \frac{1}{\eps}\|f\|_{L^\infty}^{\sfrac1\alpha}.
\ee	
Therefore,
\be
	\left|\eps^3 v_0 \cdot \frac {x_0}{\langle x_0\rangle} e^{\eps^2\langle x_0\rangle} \right|
		\leq \eps^2 \|f\|_{L^\infty}^{\sfrac1\alpha} e^{\eps^2\langle x_0 \rangle},
\ee
and we conclude that
\begin{equation}\label{e:g-lower}
	\partial_t g(z_0) + v_0\cdot \nabla_x g(z_0)
		\geq N e^{\beta t_0} \left[ \beta  \left(\langle v_0 \rangle^{-q_0} + (\eps\langle v_0\rangle)^\alpha + \eps e^{\eps^2\langle x_0\rangle} \right) - \eps^2 \|f\|_{L^\infty}^{\sfrac1\alpha} e^{\eps^2\langle x_0\rangle}\right].
\end{equation}

Next, we bound the right-hand side of~\eqref{e.contra} from above.  Writing $Q(f,f)$ in the Carleman form \eqref{e.QsQns} and using the properties in \eqref{e.crossing}, we see that
\be
\begin{split}
	Q(f,f)(z_0)
		&= Q_{\rm s}(f,f)(z_0) + Q_{\rm ns}(f,f)(z_0)
%\\
%& = \int_{\R^3} K_f(t_0,x_0,v_0,v')[f(t_0,x_0,v') - f(t_0,x_0,v_0)] \dd v' + c_s f(z_0) [f(t_0,x_0,\cdot)\ast |\cdot|^\gamma](v_0)\\
%&
		\leq Q_{\rm s}(f,g)(z_0) + Q_{\rm ns}(f,g)(z_0).
\end{split}
\ee
Using Lemma \ref{l:Qs-pointwise-base} and Lemma \ref{l:v-alpha}, 
\be\label{e.c092703}
	Q_{\rm s}(f,g)(z_0)
		\leq CN e^{\beta t_0}\left( \KO{f}\langle v_0 \rangle^{-q_0}
			+ \eps^\alpha \KO{f} \langle v_0\rangle^{\alpha-\theta_{\gamma,s}}\right), %\leq CN e^{\beta t_0}\left( \KO{f}\langle v_0 \rangle^{-q} + (\eps \langle v_0\rangle)^{\alpha}\right).
\ee
since $Q_{\rm s}(f,e^{\eps^2\langle x\rangle}) = 0$ (as with any function constant in $v$). 
For $Q_{\rm ns}$, Lemma \ref{l:Qns} implies
 \be
 \begin{split}
 &Q_{\rm ns}(f,g)(z_0) \leq C N \left(\|f\|_{L^\infty_{t,x}L^{\frac{3}{3+\gamma}+\eta}_v}%([0,T]\times\R^6)}
 	 + \|f(t_0,x_0,\cdot)\|_{L^1}%(\R^3)}
	 \right) e^{\beta t_0} \left(\langle v_0\rangle^{-q_0} + (\eps\langle v_0 \rangle)^\alpha + \eps e^{\eps^2\langle x_0\rangle}\right).
 \end{split}
 \ee
Note that $\|f(t_0,x_0,\cdot)\|_{L^1(\R^3)}$ is bounded by $\KO{f}$ in all cases.  Combining this estimate with~\eqref{e.c092703},~\eqref{e.contra}, and~\eqref{e:g-lower} and simplifying, we obtain
%\begin{equation}\label{e:Qupper}
%\begin{split}
%Q(f,f)(z_0) &\leq C N\KO{f}  e^{\beta t_0} \langle v_0\rangle^{-q_0}\\
%&\quad + C N (\|f\|_{L^\infty_{t,x}L^{3/(3+\gamma)+\eta}_v([0,T]\times\R^6)} + \|f(t_0,x_0,\cdot)\|_{L^1(\R^3)}) e^{\beta t_0} \langle v_0\rangle^{-q_0}
%\end{split}
%\end{equation}
% Combining \eqref{e:Qupper} with \eqref{e.contra} and simplifying, we obtain 
\begin{equation}\label{e:beta}
\begin{split}
	\beta&\left(\langle v_0\rangle^{-q_0} +\eps^\alpha\langle v_0\rangle^\alpha + \eps e^{\eps^2 \langle x_0\rangle}\right)
		\\&
		\leq C \langle v_0\rangle^{-q_0} \left(\KO{f} + \|f\|_{L^\infty_{t,x}L^{\frac{3}{3+\gamma}+\eta}_v}\right)
			+ C\eps^\alpha \KO{f} \langle v_0 \rangle^{\alpha} + \eps^2\|f\|_{L^\infty}^{\sfrac1\alpha} e^{\eps^2\langle x_0 \rangle},
\end{split}
\end{equation}
and we obtain a contradiction with the choice
\be
	\beta 
		= 2 C\left(\KO{f} + \|f\|_{L^\infty_{t,x}L^{\frac{3}{3+\gamma}+\eta}_v}\right)
			+ \eps\|f\|_{L^\infty}^{\sfrac1\alpha}.
\ee
As a result, it must hold that $t_0 = T$; that is $f\leq g$ on $[0,T]\times \R^6$.  This implies that, for all $\eps>0$,
\be
\begin{split}
	f(t,x,v) &< (1+\epsilon) \| f _{\rm in} \|_{L^\infty_{q_0}} \exp\left(t\left(2 C\left(\KO{f} + \|f\|_{L^\infty_{t,x}L^{\frac{3}{3+\gamma}+\eta}_v}\right) + \eps\|f\|_{L^\infty}^{\sfrac1\alpha}\right)\right)\\
&\quad \times \left( \vv^{-q_0} + (\eps\vv)^\alpha + \eps e^{\eps\langle x \rangle}\right).
\end{split}
\ee
The proof is finished after sending $\eps\to 0$.
\end{proof}

The previous lemma gives propagation of $L^\infty_{q_0}$ norms with $q_0$ fairly small. The next lemma uses another barrier argument to improve this decay by a small amount.

\begin{lemma}\label{l:gain}
Fix any $q_1$ and $q_2$ satisfying
\be
%	q_1 \geq 2 + 2s,
%	\qquad
	q_1 > 3 + \gamma + 2s
	\qquad\text{and}\qquad
	q_2 \in [q_1,q_1+|\gamma|].
\ee
Let $f\in L^\infty_{q_1}([0,T]\times\R^6)$
be a nonnegative classical solution of the Boltzmann equation \eqref{e.boltzmann} on $[0,T]\times\R^6$, with the initial data satisfying
\be
	f_{\rm in} \in L^\infty_{q_2}(\R^6).
\ee
%and $f$ satisfying (CONDITIONAL ASSUMPTIONS) for some $\KO{f}>0$. 
%Additionally, assume 
%\be
%f\in L^\infty_{q_1}([0,T]\times\R^6).% \quad \text{ for some $\theta\in (0, |\gamma|]$.}
%\ee 
Then $f \in L^\infty_{q_2}([0,T]\times \R^6)$, with 
\be
\|f(t)\|_{L^\infty_{q_2}(\R^6)}  \leq \|f_{\rm in}\|_{L^\infty_{q_2}(\R^6)} \exp(C t\|f\|_{L^\infty_{q_1}([0,T]\times\R^6)}), \quad t\in [0,T],
\ee
where the constant $C>0$ depends on $q_1$, $q_2$, $\gamma$, $s$, and $c_b$. 
\end{lemma}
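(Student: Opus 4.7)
My plan is to run a first-crossing/barrier argument very close to the one used in the proof of \Cref{l:base-estimate}, with the single essential modification that the pair \Cref{l:Qs-pointwise-base}--\Cref{l:Qns} is replaced by \Cref{l:Q-polynomial}. The hypothesis $q_2 \in [q_1, q_1 - \gamma]$ is exactly the range in which \Cref{l:Q-polynomial} applies, so it directly yields the decisive pointwise bound $Q(f,\vv^{-q_2})(v) \lesssim \|f\|_{L^\infty_{q_1}} \vv^{-q_2}$, which is all that is needed to close the argument.

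Concretely, for small $\eps > 0$, I will set $N = (1+\eps)\|f_{\rm in}\|_{L^\infty_{q_2}(\R^6)}$, pick $\alpha \in (0, 2s)$ and $\beta > 0$ (to be chosen later), and use the barrier
\[
g(t,x,v) = N e^{\beta t}\bigl(\vv^{-q_2} + (\eps \vv)^\alpha + \eps e^{\eps^2 \langle x\rangle}\bigr).
\]
The two auxiliary terms force $g - f \to \infty$ at velocity and spatial infinity (using $f \in L^\infty_{q_1} \subset L^\infty$), so a first crossing point $z_0 = (t_0, x_0, v_0)$ exists by continuity and compactness, assuming for contradiction $t_0 < T$. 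At $z_0$ one has $f(z_0) = g(z_0)$ and $f(t_0, x_0, \cdot) \leq g(t_0, x_0, \cdot)$ pointwise; combined with the positivity properties of $Q$, this yields
\[
(\partial_t + v_0 \cdot \nabla_x) g(z_0) \leq Q(f,f)(z_0) \leq Q(f,g)(z_0).
\]

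I will then estimate the right-hand side termwise. \Cref{l:Q-polynomial} handles the main $\vv^{-q_2}$ term. \Cref{l:v-alpha} handles the $(\eps\vv)^\alpha$ term after I verify that $\KO{f}$ is finite: with $q_0 := 2+2s$ and any $p_0 > 3/(3+\gamma+2s) > 1$ fixed as in the convention of this section, the hypothesis $q_1 > 3 + \gamma + 2s$ together with the trivial estimate $f \leq \|f\|_{L^\infty_{q_1}}\vv^{-q_1}$ gives $\|f\|_{L^1_{q_0 - 2 + \gamma}} + \|f\|_{L^{p_0}_{q_0 - 2 + \gamma}} < \infty$. Finally, $Q(f, e^{\eps^2 \langle x\rangle}) = 0$ since the function does not depend on $v$. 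The lower bound on the transport term---including the bound $|v_0| \leq \eps^{-1}\|f\|_{L^\infty}^{1/\alpha}$ extracted from $(\eps\langle v_0\rangle)^\alpha \leq f(z_0) \leq \|f\|_{L^\infty}$---is identical to that in \Cref{l:base-estimate} and I would simply reuse it.

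Choosing $\beta = 2C\|f\|_{L^\infty_{q_1}} + \eps\|f\|_{L^\infty}^{1/\alpha}$ then contradicts the crossing inequality, forcing $t_0 = T$, and sending $\eps \to 0$ yields the stated estimate. The main ``obstacle'' is purely bookkeeping: ensuring $\KO{f}$ is finite under just the $L^\infty_{q_1}$ hypothesis, and checking that the error terms produced by the $(\eps\vv)^\alpha$ and $\eps e^{\eps^2\langle x\rangle}$ barriers are absorbable into $\beta$ or vanish in the $\eps \to 0$ limit. Neither is substantive; the proof is really a streamlined version of \Cref{l:base-estimate} in which \Cref{l:Q-polynomial} does all of the heavy lifting that \Cref{l:Qs-pointwise-base} and \Cref{l:Qns} had to do before.
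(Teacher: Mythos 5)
Your proposal is essentially the paper's proof: same barrier, same crossing argument, and the substitution of \Cref{l:Q-polynomial} for the pair \Cref{l:Qs-pointwise-base}/\Cref{l:Qns} on the main $\vv^{-q_2}$ term is exactly the move the authors make. Your $\KO{f}\lesssim\|f\|_{L^\infty_{q_1}}$ reduction for the $(\eps\vv)^\alpha$ term is also fine (the paper instead bounds the convolution $f\ast|\cdot|^{\gamma+2s}$ directly by $\|f\|_{L^\infty_{q_1}}$, but these are equivalent here).

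There is, however, one inaccurate claim you should fix. It is not true that $Q(f, e^{\eps^2\langle x\rangle}) = 0$: only the singular part $Q_{\rm s}$ annihilates functions that are constant in $v$. From the Carleman decomposition \eqref{e.Qns}, $Q_{\rm ns}(f,c) = C\,c\,\int f(v+z)|z|^\gamma\dd z$, which is nonzero; this is precisely why \Cref{l:base-estimate}'s bound on $Q_{\rm ns}(f,g)$ carries a factor of the full barrier $g$, including the $\eps e^{\eps^2\langle x_0\rangle}$ piece. The same issue appears in your treatment of $(\eps\vv)^\alpha$: \Cref{l:v-alpha} only controls the $Q_{\rm s}$ part, and you still need to estimate $Q_{\rm ns}(f,\vvd^\alpha)$. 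Both omissions are harmless because $Q_{\rm ns}(f,g)\lesssim g(v)\,\|f\|_{L^\infty_{q_1}}$ under the hypothesis $q_1>3+\gamma+2s>3+\gamma$ (so the convolution $f\ast|\cdot|^\gamma$ is controlled, as in \Cref{l:Qns}), and the resulting terms are absorbed into the choice of $\beta$ exactly as in the paper. But you should state this explicitly rather than assert that $Q$ vanishes on $v$-constant functions.
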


\begin{proof}
The structure of this proof is the same as the proof of Lemma \ref{l:base-estimate}, with a barrier of the form 
\be
g(t,x,v) = N e^{\beta t} \left( \vv^{-q_2} + (\eps\vv)^\alpha + \eps e^{\eps^2\langle x \rangle}\right),
\ee
and $N = (1+\eps)\|f_{\rm in}\|_{L^\infty_{q_2}(\R^6)}$.

The only changes are that we bound $Q(f,\langle \cdot\rangle^{-q_2})(z_0)$ using Lemma \ref{l:Q-polynomial}  instead of Lemma \ref{l:Qs-pointwise-base}, and we bound the term $Q(f,\langle \cdot \rangle^\alpha)$ with the $L^\infty_{q_1}$ norm of $f$, which controls the convolution $f\ast |\cdot|^{\gamma+2s}$ arising in Lemma \ref{l:v-alpha} since $q_1>3+\gamma+2s$,  and also controls the convolution $f\ast |\cdot|^\gamma$ arising in $\Qns$ (see Lemma \ref{l:Qns}). 
%[THIS SHOULD BE FLESHED-OUT; IT IS STILL TRUE THAT $\gamma + 2s - q_1 < -3$ IN ALL PARAMETER REGIMES, BUT NOW YOU MIGHT GET
%$\langle v_0 \rangle^{\alpha+\gamma}$ INSTEAD OF $\langle v_0 \rangle^{\alpha-2s}$]\\
%
This gives, instead of \eqref{e:beta},
\be
\begin{split}
	\beta&\left(\langle v_0\rangle^{-q_2} +(\eps\langle v_0\rangle)^\alpha + \eps e^{\eps^2 \langle x_0\rangle}\right)
		\\&\leq C \langle v_0\rangle^{-q_2} \|f\|_{L^\infty_{q_1}([0,T]\times\R^6)}
			 + C\eps^\alpha \|f\|_{L^\infty_{q_1}([0,T]\times\R^6)} \langle v_0 \rangle^{\alpha+ \gamma}
			 + \eps^2\|f\|_{L^\infty}^{\sfrac1\alpha} e^{\eps^2\langle x_0 \rangle}.
\end{split}
\ee
Therefore, we obtain a contradiction by choosing $\beta=2C\|f\|_{L^\infty_{q_1}([0,T]\times\R^6)} + \eps \|f\|_{L^\infty([0,T]\times\R^6)}^{\sfrac1\alpha}$.  The rest of the argument follows as in \Cref{l:base-estimate} by taking $\eps\to 0$. 
\end{proof}

\subsection{Proving \Cref{t:decay1}}

Next, we prove propagation of pointwise polynomial decay of any order, by starting with the estimate of \Cref{l:base-estimate} and then iteratively applying \Cref{l:gain}. The following theorem is a more detailed statement of our first main result, Theorem \ref{t:decay1}.

\begin{theorem}\label{t:decay-detailed}
Suppose that $q$ and $q_0$ are such that
\be
	q \geq q_0,
	\qquad q_0 \geq 2 + 2s,
	\qquad\text{and}\qquad
	q_0> 3 + \gamma + 2s.
\ee
For  $T>0$, $\eta >0$, assume that  $f\geq 0$ is a bounded classical solution of the Boltzmann equation \eqref{e.boltzmann} on $[0,T]\times\R^6$ with 
\be
\KO{f} < \infty, \quad \|f\|_{L^\infty_{t,x} L^{3/(3+\gamma)+\eta}_v([0,T]\times\R^6)} < \infty, \quad \text{ and } f_{\rm in} \in L^\infty_q(\R^6),
\ee
%\be
%	f \in L^\infty([0,T]\times\R^6),
%		\quad\text{ with } f_{\rm in} \in L^\infty_q(\R^6),
%\ee
%is a  nonnegative classical solution of the Boltzmann equation \eqref{e.boltzmann} on $[0,T]\times\R^6$ with $\KO{f}$ being finite 
where $\KO{f}$ is defined in \eqref{eq:K0_definition}.
%, and for some $q>3 + \gamma + 2s$, assume that 
%\be
%f_{\rm in} \in L^\infty_{q}(\R^6).
%\ee
%Assume that $f\in L^\infty([0,T]\times\R^6)$ and that the quantity $\KO{f}$ defined in \eqref{eq:K0_definition} is finite.
Then $f \in L^\infty_q([0,T]\times\R^6)$, and %\CH{THE $q_0$ BELOW SHOULD BE $q$ NO?}
\be
	\|f(t)\|_{L^\infty_q(\R^6)} \leq \|f_{\rm in}\|_{L^\infty_q(\R^6)} \exp(M_q(t, \|f_{\rm in}\|_{L^\infty_{q}(\R^6)})), \quad t\in [0,T],
\ee
for a function $M_q:\R_+\times\R_+\to \R_+$, increasing in both variables, depending on $q$, $q_0$, $\KO{f}$, $\|f\|_{L^\infty_{t,x}L^{\sfrac{3}{(3+\gamma)} + \eta}_v([0,T]\times\R^6)}$, $\eta$, $T$, $\gamma$, $s$, and $c_b$. 
\end{theorem}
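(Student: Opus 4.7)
My plan is to establish propagation of the $L^\infty_q$ norm by iteratively increasing the decay exponent: start from a base $L^\infty_{q_0}$ estimate supplied by \Cref{l:base-estimate}, then repeatedly apply the gain-of-decay estimate \Cref{l:gain}. Since each application of \Cref{l:gain} raises the exponent by at most $|\gamma|$, only a finite number $N := \lceil (q-q_0)/|\gamma|\rceil$ of iterations are required to reach $q$. Crucially, the stronger hypotheses $\KO{f} < \infty$ and $\|f\|_{L^\infty_{t,x}L^{3/(3+\gamma)+\eta}_v} < \infty$ are only needed for the base case; from there onward, each iterate bootstraps off the previous one.

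For the base case, the assumptions $q_0 \geq 2+2s$ and $q_0 > 3+\gamma+2s$ place us exactly in the regime of \Cref{l:base-estimate}. Setting $A_0 := \KO{f} + \|f\|_{L^\infty_{t,x} L^{3/(3+\gamma)+\eta}_v}$, which is finite by hypothesis, the lemma delivers
\[
\|f(t)\|_{L^\infty_{q_0}(\R^6)} \le \|f_{\rm in}\|_{L^\infty_{q_0}(\R^6)}\, e^{C_0 T A_0}
\]
for a constant $C_0 = C_0(q_0, p_0, \eta, \gamma, s, c_b)$. Since $q \geq q_0$, the right-hand side is further bounded by $\|f_{\rm in}\|_{L^\infty_q(\R^6)} e^{C_0 T A_0}$.

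For the inductive step, I would fix an increasing sequence $q_0 < q_1 < \cdots < q_N = q$ with $q_{j+1} - q_j \leq |\gamma|$ (for instance, with uniform spacing $(q-q_0)/N$). Each $q_j$ exceeds $3+\gamma+2s$ since $q_0$ does and the sequence is increasing, so \Cref{l:gain} applies at every stage with $(q_1, q_2) = (q_j, q_{j+1})$, yielding
\[
\|f(t)\|_{L^\infty_{q_{j+1}}(\R^6)} \le \|f_{\rm in}\|_{L^\infty_{q_{j+1}}(\R^6)}\, \exp\!\bigl(C_j T\, \|f\|_{L^\infty_{q_j}([0,T]\times \R^6)}\bigr),
\]
with $C_j = C_j(q_j, q_{j+1}, \gamma, s, c_b)$. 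Bounding $\|f_{\rm in}\|_{L^\infty_{q_{j+1}}} \leq \|f_{\rm in}\|_{L^\infty_q}$ and composing these $N$ inequalities with the base case produces a bound on $\|f(t)\|_{L^\infty_q}$ of the form $\|f_{\rm in}\|_{L^\infty_q}\, \exp(M_q(T, \|f_{\rm in}\|_{L^\infty_q}))$, where $M_q$ is a finite iterated exponential in $T$ and $\|f_{\rm in}\|_{L^\infty_q}$ and therefore manifestly increasing in both arguments.

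The substantive difficulty in this scheme is concentrated in \Cref{l:Qs-pointwise-base}, which underlies the base case and whose proof is deferred to \Cref{s:Qs-pointwise-base-proof}; the iteration itself is essentially mechanical once \Cref{l:base-estimate} and \Cref{l:gain} are in hand. The only genuine bookkeeping to watch is that $N$ depends only on $q, q_0, \gamma$, so that the finite collection of constants $C_j$ and compounded exponentials can be absorbed into a single function $M_q$ with the required monotonicity.
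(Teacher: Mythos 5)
Your proposal is correct and mirrors the paper's argument: the base case is dispatched by \Cref{l:base-estimate} under the hypotheses $q_0\ge 2+2s$ and $q_0>3+\gamma+2s$, and the gain-of-decay \Cref{l:gain} is iterated a finite number of times (each step advancing the exponent by at most $|\gamma|$) to reach $q$, with the iterated exponentials collapsed into a monotone function $M_q$. The paper chooses the specific ladder $q_n=\max\{q,\,q_0-n\gamma\}$ rather than your uniform spacing, but this difference is cosmetic.
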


 If we make the choice
\be\label{e.c100301}
	q_0 = \begin{cases}
		3 + \gamma + 2s + \delta
			\qquad&\text{ whenever } \gamma \geq -1,\\
		2+2s
			\qquad&\text{ whenever } \gamma < -1,
		\end{cases}
\ee
for a small $\delta>0$, then \Cref{t:decay1} follows directly from \Cref{t:decay-detailed} since, after choosing $\delta$ sufficiently small, $\KO{f}$ is controlled by the quantities in~\eqref{e.hydro}.
%in the theorem below should ideally be chosen following~\eqref{e.c100301}.  In fact, making the choice~\eqref{e.c100301}, we see that |\Cref{t:decay1} follows directly from \Cref{t:decay-detailed} since, after choosing $\delta$ sufficiently small, $\KO{f}$ is controlled by the quantities in~\eqref{e.hydro}.

\begin{proof}
We proceed by induction by establishing the conclusion for
\be
	q_n = \max\{q, q_0 - n\gamma\}.
\ee
Recall that $-\gamma = |\gamma| > 0$, by assumption.   
The base case $q = q_0$ follows directly from \Cref{l:base-estimate} with the choice
\be
	M_{q_0}(t,z)
		= C t \left(\KO{f} + \|f\|_{L^\infty_{t,x}L^{\frac{3}{3+\gamma} + \eta}_v([0,T]\times\R^6)}\right).
\ee

%If $q > q_0$, we obtain it by iteratively applying \Cref{l:gain} on $[0,t]$ with $q_1 = q_0$ and $q_2 = \max\{q,q_0-\gamma\}$, we obtain
%\be
%\begin{split}
%\|f(t)\|_{L^\infty_{q_0-\gamma}(\R^6)}  &\leq \|f_{\rm in}\|_{L^\infty_{q_0-\gamma}(\R^6)} \exp(C t\|f\|_{L^\infty_{q_0}([0,t]\times\R^6)}),\\
%&\leq \|f_{\rm in}\|_{L^\infty_{q_0-\gamma}(\R^6)} \exp(C t\|f_{\rm in}\|_{L^\infty_{q_0}(\R^6)}\exp(M_{q_0}(t,\|f_{\rm in}\|_{L^\infty_{q_0}(\R^6)}))).
%\end{split}
%\ee
%Let us recall that $- \gamma = |\gamma| > 0$. 
%If $q \leq q_0 - \gamma$, then our conclusions holds with $M_{q_0-\gamma}(t,z) = Ct z \exp(M_{q_0}(t, z))$.

We now show the inductive step, assuming that we have established the inequality for $q_{n-1}$. %it for the case $q \in (q_0 - (n-1)\gamma|, q_0 - n\gamma|]$, where $n \in \N$, assuming that we have already established it for $q_0 - (n-1)\gamma$.  Recall that $-\gamma = |\gamma|>0$, by assumption. 
%If $q > q_0 - \gamma$, we repeat this argument $n_q$ times, where $n_q$ is the smallest natural number such that $q \leq q_0 - n_q\gamma$, with the recursive definition
Let
\be\label{e.M_q}
	M_{q_n}(t,z) = Ct z\exp(M_{q_{n-1}}(t,z))).
\ee 
Applying \Cref{l:gain} on $[0,t]$, % with $q_1 = q_0 - (n-1)\gamma$ and $q_2 = \max\{q,q_0-n\gamma\}$, 
we obtain
\be
\begin{split}
	\|f(t)\|_{L^\infty_{q_n}}
		&\leq \|f_{\rm in}\|_{L^\infty_{q_n}} \exp(C t\|f\|_{L^\infty_{q_{n-1}}([0,t]\times\R^6)}).
%		\\&
%		= \|f_{\rm in}\|_{L^\infty_{q_n}} \exp(C t\|f\|_{L^\infty_{q_0 - (n-1)\gamma}([0,t]\times\R^6)}).
\end{split}
\ee
Then, applying the inductive hypothesis, we find
\be
\begin{split}
	\|f(t)\|_{L^\infty_{q_n}}
		&\lesssim \|f_{\rm in}\|_{L^\infty_{q_n}} \exp(C t \|f_{\rm in}\|_{L^\infty_{q_{n-1}}}\exp(M_{q_{n-1}}(t,\|f_{\rm in}\|_{L^\infty_{q_{n-1}}}))). %\\
%		&= \|f_{\rm in}\|_{L^\infty_{q_n}} \exp(C t \|f_{\rm in}\|_{L^\infty_{q_{n-1}}}\exp(M_{q_{n-1}}(t,\|f_{\rm in}\|_{L^\infty_{q_{n-1}}})))
	\end{split}
\ee
The proof is concluded after using the obvious inequality
\be
	\|f_{\rm in}\|_{L^\infty_{q_{n-1}}}
		\leq \|f_{\rm in}\|_{L^\infty_{q_n}},
\ee
the fact that $M_{q-1}(t,z)$ is increasing in $z$, and the definition~\eqref{e.M_q} of $M_q(t,z)$.
%Note that, for each $n$, we choose $q_1 = q_0 - (n-1)\gamma$ and $q_2 = \max\{q_1 - \gamma, q\}$.  %At the last step, instead of $q_2 = q_1-\gamma$, we may take $q_2 \in (q_1,q_1+|\gamma|]$ as needed in order to obtain the exact $q$ in the statement of the theorem.
%This concludes the proof.
\end{proof}

\section{Decay estimates in terms of weaker quantities: \Cref{t:decay2}}\label{s:better-decay}

\subsection{Previous results and their extensions}

In this section, we combine Theorem \ref{t:decay1} with some ideas from the literature to obtain Theorem \ref{t:decay2}.  First, we recall the self-generating lower bounds of \cite{HST2020lowerbounds}: 

\begin{theorem}{\cite[Theorem 1.2]{HST2020lowerbounds}}\label{t:lower-bounds} 
%\CH{IS THERE SOME ADVANTAGE TO STATING THIS FOR $\gamma$ UP TO 1?}
%Let $\gamma \in (-3,1)$ and $s\in (0,1)$. 
Suppose that $f$ is a nonnegative classical solution of \eqref{e.boltzmann} on $[0,T]\times \R^6$, with initial data $f_{\rm in}$ satisfying the lower bound
\begin{equation*}%\label{e.mass-core}
 f_{\rm in}(x,v) \geq c_m, \quad (x,v) \in B_r(x_m,v_m),
 \end{equation*}
for some $x_m,v_m\in \R^3$ and $c_m, r>0$. %well-distributed with parameters $r$, $\delta$, and $R$. 
Assume that $f$ satisfies %\CH{DO WE NEED POSITIVE PART HERE?}
\begin{equation}\label{e.hydro-general}
\begin{split}
&\sup_{t\in [0,T],x\in \R^3} \int_{\R^3} \vv^{(\gamma+2s)_+} f(t,x,v) \dd v \leq G_0, \qquad \text{ and}\\
&\sup_{t\in [0,T],x\in \R^3} \| f(t,x,\cdot)\|_{L^p(\R^3)} \leq P_0
	\quad \text{ for some } p>\frac{3}{3 + \gamma + 2s} \quad (\mbox{only if } \gamma + 2s < 0).
%\sup_{t,x} \int_{\R^3} |v|^{\gamma+2s} f(t,x,v) \dd v &\leq G_0 \quad (\mbox{only necessary if } \gamma + 2s > 2),
\end{split}
\end{equation}
 Then there are positive, continuous functions $\mu, \eta: (0,T]\times \R^3\to \R$ such that
\be 
f(t,x,v) \geq \mu(t,x-x_m) e^{-\eta(t,x-x_m)|v|^2}, \quad (t,x,v) \in (0,T]\times\R^6.
\ee
Furthermore, $\mu$ and $\eta$ are uniformly positive and bounded on any compact subset of $(0,T]\times \R^3$ and depend only on $T$, $c_m$, $r$, $|v_m|$, $G_0$, and $P_0$. % \CH{THIS USE OF $K_0$ CLASHES WITH THE OTHER ONE}

 If, in addition, $f_{\rm in}$ is well-distributed with parameters $c_m, r, R$, then the functions $\mu$ and $\eta$ can be taken independent of $x$, i.e.
\be
f(t,x,v) \geq \mu(t) e^{-\eta(t) |v|^2}, (t,x,v) \in [0,T]\times\R^6.
\ee
\end{theorem}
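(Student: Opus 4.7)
The plan is to combine short-time propagation of the initial positivity (along characteristics) with an iterated velocity-spreading lemma to obtain the Gaussian lower bound.

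First, I would show that the initial local positivity persists quantitatively for a short time: along characteristics, $\partial_t f = Q(f,f) \geq -C f(v) \int_{\R^3} f(v_*) |v-v_*|^\gamma \dd v_*$, and the right-hand side is bounded uniformly in $(t,x)$ by the hydrodynamic assumptions in~\eqref{e.hydro-general} via standard convolution estimates (using the $L^1_v$ moment plus, when $\gamma+2s<0$, the $L^p_v$ bound). A Gr\"onwall argument then gives $f(t, x_m + tv, v) \geq c_m/2$ on $B_r(v_m)$ for small $t > 0$, producing a first ``seed'' of quantitative positivity.

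Second, I would establish and iterate a velocity-spreading lemma: if $f(t_0, x, \cdot) \geq c \1_{B_\rho(v_0)}$, then after a short additional time $\tau$ one obtains $f(t_0+\tau, x+\tau v_0, \cdot) \geq c' \1_{B_{\rho(1+\kappa)}(v_0)}$ for some $\kappa>0$. The mechanism is that the Carleman kernel $K_f$ from~\eqref{e.Qs} behaves like a fractional-diffusion kernel of order $2s$ whenever $f$ has positive mass nearby in velocity, and this diffusion spreads the positivity outward. Iterating with appropriately chosen time increments and tracking the coefficient decay against the geometric growth of the ball radius yields a Gaussian envelope $f \geq \mu(t, x-x_m) e^{-\eta(t, x-x_m) |v|^2}$. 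Under the well-distributed hypothesis, the construction runs uniformly in $x$ (since initial positivity is available near every $x$ with uniform constants $c_m, r, R$), and taking an infimum in $x$ produces $\mu(t), \eta(t)$ independent of $x$.

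The main obstacle is the velocity-spreading lemma in the regime $\gamma + 2s < 0$: the convolution $\int f(v+w) |w|^{\gamma+2s} \dd w$ that pointwise controls $K_f$ from above (and analogously from below away from the diagonal) requires a genuine $L^p_v$ estimate on $f$ with $p > 3/(3+\gamma+2s)$ to be well-defined, which is precisely the $L^p_v$ hypothesis imposed in the soft regime. A second delicate point is tracking the exact rates in the iteration so as to achieve the Gaussian envelope rather than a weaker (sub-Gaussian) decay profile; this typically requires optimizing $\tau$ and $\kappa$ at each step as functions of $\rho$.
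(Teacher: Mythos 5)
This theorem is \emph{not proved in the present paper}: it is quoted verbatim from \cite[Theorem~1.2]{HST2020lowerbounds}, and the paper simply cites it when building \Cref{p:Linfty} and \Cref{t:decay2}. So there is no ``paper proof'' to compare against; what you can be compared against is the strategy of that external reference and its predecessor (Imbert--Mouhot--Silvestre's lower-bound paper). At that level your outline captures the correct skeleton: propagate initial positivity a short time along characteristics, then iterate a velocity-spreading lemma driven by the non-degeneracy of the Carleman kernel $K_f$, and in the well-distributed case run the construction uniformly in $x$.

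That said, your Step~1 contains a genuine gap for the non-cutoff equation. You write $Q(f,f)\geq -C\,f(v)\int f(v_*)|v-v_*|^\gamma\dd v_*$ and then apply Gr\"onwall. This inequality is a \emph{cutoff} relic: it comes from splitting $Q$ into gain and loss terms and discarding the gain, which is not available here. In the non-cutoff Carleman decomposition \eqref{e.QsQns}--\eqref{e.Qns}, the non-singular piece $Q_{\rm ns}(f,f)$ is indeed nonnegative, but the singular piece $Q_{\rm s}(f,f)(v)=\int K_f(v,v')[f(v')-f(v)]\dd v'$ cannot be bounded below by $-f(v)\int K_f(v,v')\dd v'$, because $K_f(v,v')\sim|v-v'|^{-3-2s}$ is not integrable near $v'=v$; the cancellation in the principal-value integral is essential and is lost in your estimate. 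To make Step~1 rigorous one must instead run a barrier/comparison argument (as is done in the literature and as the present paper itself does repeatedly, e.g.\ in the proof of \Cref{l:base-estimate}): insert a smooth compactly supported subsolution $g(t,v)=c_m(1-At)\psi(v)$, use that at the first crossing point $Q_{\rm s}(f,f)\geq Q_{\rm s}(f,g)$, and bound $Q_{\rm s}(f,g)$ via \Cref{l:Qs-interp}-type estimates on the smooth barrier. The rest of your outline — the spreading lemma and its iteration with optimized $(\tau,\kappa)$, and the role of the extra $L^p_v$ hypothesis when $\gamma+2s<0$ in making $\int f(v+w)|w|^{\gamma+2s}\dd w$ finite — is correctly identified, but the quantitative spreading step in the $\gamma+2s<0$ regime is substantial and is the whole content of \cite{HST2020lowerbounds}, not something that follows from a sketch.
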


 See \eqref{e.well} for the definition of well-distributed initial data.

Next, we have a minor extension of \cite[Theorem 1.2]{silvestre2016boltzmann} %(see also \cite[Theorem 4.1]{imbert2018decay}) 
that does not require the solution $f$ to be periodic in $x$. The result in \cite{silvestre2016boltzmann} derives a pointwise upper bound for $f$ in terms of upper bounds on the mass, energy, and entropy densities, as well as a lower bound on the mass density. It was already pointed out in \cite{HST2020lowerbounds} that Theorem \ref{t:lower-bounds} allows one to remove the lower mass bound and upper entropy bound, at the cost of adding a dependence on $T$ and the initial data.

\begin{proposition}\label{p:Linfty}
Let $f:[0,T]\times\R^6\to [0,\infty)$ be a solution to the Boltzmann equation \eqref{e.boltzmann} that lies in $L^\infty_{q_0}([0,T]\times\R^6)$, and that satisfies the inequalities
\begin{equation}\label{e.hydro2}
\begin{split}
%0 < m_0 \leq 
\int_{\R^3} f(t,x,v)\dd v &\leq M_0,\\
\int_{\R^3} |v|^2 f(t,x,v)\dd v &\leq E_0,\\
%\int_{\R^3} f(t,x,v) \log f(t,x,v) \dd v &\leq H_0,\\
\int_{\R^3} |v|^k f^p(t,x,v) \dd v &\leq P_0,  \quad \text{ (only necessary if $\gamma+2s\leq 0$),}
\end{split}
\end{equation}
uniformly in $(t,x)\in [0,T]\times\R^3$, where $p> \sfrac{3}{(3 + \gamma + 2s)}$ and $k = 1- \sfrac{3 (\gamma+2s)}{(2s)}$. Assume that the initial data $f_{\rm in}$ is well-distributed with parameters $c_m$, $r$, $R$.

Then there exists $N>0$, depending on the constants $M_0$, $E_0$, $P_0$, $p$, $T$, $c_m$, $r$, and $R$, such that
\be
	\|f(t)\|_{L^\infty(\R^6)} \leq N(1+t^{-\frac3{2s}}),  \quad t\in [0,T].
\ee
\end{proposition}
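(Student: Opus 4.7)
The plan is to combine Theorem~\ref{t:lower-bounds} with the $L^\infty$ estimate of~\cite[Theorem~1.2]{silvestre2016boltzmann}. Silvestre's original result is stated for $x$-periodic solutions and hypothesizes upper bounds on mass, energy, and entropy densities together with a lower bound on the mass density. By~\cite{HST2020lowerbounds}, a pointwise Gaussian lower bound on $f$ replaces both the lower mass and upper entropy hypotheses; and once that Gaussian lower bound holds uniformly in $x$---which Theorem~\ref{t:lower-bounds} delivers under the well-distributed hypothesis on $f_{\rm in}$---Silvestre's barrier argument applies verbatim on $\R^3_x$ in place of $\mathbb T^3_x$, since the crossing is ruled out pointwise.

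First, I would verify that~\eqref{e.hydro2} implies the hypotheses~\eqref{e.hydro-general} of Theorem~\ref{t:lower-bounds}. The moment bound $\int \vv^{(\gamma+2s)_+} f\,\mathrm dv \leq G_0$ follows from $M_0$ and $E_0$ by H\"older interpolation, since $(\gamma+2s)_+ \leq 2$. When $\gamma+2s < 0$, the unweighted $L^p_v$ bound required by Theorem~\ref{t:lower-bounds} is obtained by splitting at $|v|=1$: on $\{|v|\geq 1\}$ the weight $|v|^k \geq 1$ gives $\int_{|v|\geq 1} f^p\,\mathrm dv \leq P_0$, while on $\{|v|\leq 1\}$ the a priori hypothesis $f\in L^\infty_{q_0}([0,T]\times\R^6)$ makes $\int_{|v|\leq 1} f^p\,\mathrm dv$ finite (qualitatively, with a bound depending on $\|f\|_{L^\infty_{q_0}}$). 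Applying Theorem~\ref{t:lower-bounds} then yields continuous positive functions $\mu(t),\eta(t)$ on $(0,T]$, depending only on $T,c_m,r,R$ and the quantities in~\eqref{e.hydro2}, such that $f(t,x,v) \geq \mu(t)e^{-\eta(t)|v|^2}$ uniformly in $x\in\R^3$.

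With this Gaussian lower bound in hand, Silvestre's barrier argument (modified as in~\cite{HST2020lowerbounds} to use the pointwise lower bound in place of the lower mass and upper entropy hypotheses) closes the proof: one places a barrier of shape $N(1+t^{-3/(2s)})\varphi(v)$ for a suitable velocity profile $\varphi$ and a constant $N$, and at any would-be first crossing point derives a contradiction using the upper bounds for $Q(f,\cdot)$ from Section~\ref{s:prelim} in terms of the hydrodynamic data in~\eqref{e.hydro2}, together with the coercivity of the singular part of $Q$ furnished by the Gaussian lower bound on $f$ (which yields a pointwise lower bound on $K_f(v,v')$). The main obstacle is to ensure the final constant $N$ does not inherit a dependence on $\|f\|_{L^\infty_{q_0}}$ from the verification step above; this is standard and I would handle it by observing that the a priori regularity is used only qualitatively, to certify finiteness of the relevant integrals and the existence of a first crossing point of $f$ with the barrier, while the quantitative estimate extracted from the crossing contradiction depends only on $\mu,\eta$ and the constants in~\eqref{e.hydro2}.
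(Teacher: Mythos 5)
Your high-level strategy matches the paper's: invoke Theorem~\ref{t:lower-bounds} to obtain uniform Gaussian lower bounds, then rerun Silvestre's barrier argument as modified in \cite{HST2020lowerbounds}, with those pointwise lower bounds standing in for the lower mass bound and upper entropy bound. Your verification that \eqref{e.hydro2} supplies the inputs to Theorem~\ref{t:lower-bounds} (the $G_0$ bound from $M_0,E_0$ by interpolation, the unweighted $L^p_v$ bound by splitting at $|v|=1$) is consistent with what the paper has in mind.

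However, there is a genuine gap where you claim the barrier argument ``applies verbatim on $\R^3_x$ in place of $\mathbb T^3_x$, since the crossing is ruled out pointwise.'' On a non-compact $x$-domain, Silvestre's barrier $N(1+t^{-3/(2s)})$ --- which is constant in $(x,v)$, not of the form $N(1+t^{-3/(2s)})\varphi(v)$ as you write --- need not have a first crossing point with $f$ at all: $\sup_x f$ could approach the barrier level only as $|x|\to\infty$. Even granting $f\in L^\infty_{q_0}$, that gives compactness in $v$ but not in $x$. The paper resolves this by replacing the barrier with $h_\eps(t,x) = N(1+t^{-3/(2s)})e^{\eps\langle x\rangle}$, which grows in $x$ and hence forces any touching to happen on a compact set. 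This then injects a new term $v_0\cdot\nabla_x h_\eps$ into the inequality at the crossing point, and one must check that this term does not spoil the contradiction. Here the $L^\infty_{q_0}$ hypothesis is used \emph{quantitatively}: since $h_\eps(z_0)=f(z_0)\leq \|f\|_{L^\infty_{q_0}}\langle v_0\rangle^{-q_0}$, one has $|v_0|\lesssim (\|f\|_{L^\infty_{q_0}}/h_\eps(z_0))^{1/q_0}$, and the transport contribution carries an explicit factor of $\eps$, so it is dominated once $N$ is chosen (independently of $\eps$) and vanishes as $\eps\to 0$. Your remark that the regularity is used ``only qualitatively'' misses this: it is used in a quantitative estimate, but one that disappears in the $\eps$-limit rather than entering the final constant $N$. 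Without the modified barrier and the accompanying control of the transport term, the proof does not close on $\R^3_x$.
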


We point out that, importantly, the conclusion is quantitatively independent of the assumption that $f\in L^\infty_{q_0}$, which is the strength of the result. %   \CH{SEE IF WE NEED THIS COMMENT}

\begin{proof}
For $N>0$ to be determined and a small $\eps>0$, define the barrier
\be
	h_\eps(t,x) = N (1 + t^{-\frac{3}{2s}}) e^{\eps\langle x\rangle}.
\ee
This is, essentially, the barrier used in the proof of \cite[Theorem 1.2]{silvestre2016boltzmann}, with an extra factor of $e^{\eps\langle x\rangle}$ included so that $h$ grows as $|x|\to \infty$.  Additionally, the argument here bears strong resemblance to the proof of \Cref{l:base-estimate}, so we merely outline certain steps in the argument.

If $f <  h_\eps$ on $[0,T]\times \R^6$ for every $\eps>0$, then we are finished after taking $\eps$ to zero.  On the contrary, if $f$ is not always smaller than $h_\eps$, we may argue exactly as in the proof of \Cref{l:base-estimate} to obtain
%
%Since $f$ is bounded, we may argue as in the proof of \Cref{l:base-estimate} to obtain 
%
%
%the inequality $f<h$ holds for small time values. If $f<h$ is not true in $[0,T]\times\R^6$, then there is a first time 
a first crossing point $z_0 = (t_0, x_0, v_0)$ with $t_0>0$. % where the inequality is false. Because $f$ lies in $L^\infty_{q_0}$ while $h$ is bounded in $v$ and approaches $+\infty$ as $|x|\to \infty$, there must be a point $z_0 = (t_0,x_0,v_0)$ where $f(z_0) = h(z_0)$ for the first time.
At this crossing point, one has %, as in the proof of Lemma \ref{l:base-estimate},
\begin{equation}\label{e.inequality}
	 \partial_t h_\eps(z_0) + v_0\cdot \nabla_x h_\eps(z_0)
	 	\leq \partial_t f(z_0) + v_0\cdot\nabla_x f(z_0)
		= Q(f,f)(z_0).
\end{equation}
Since $f\in L^\infty_{q_0}$, we have
\be
	h_\eps(z_0)
		= f(z_0) \leq \|f\|_{L^\infty_{q_0}([0,T]\times\R^6)} \langle v_0 \rangle^{-q_0}, 
\ee
and
\be
	|v_0|
		\leq \langle v_0 \rangle
		\leq \left[\frac{\|f\|_{L^\infty_{q_0}([0,T]\times\R^6)}}{h_\eps(z_0)}\right]^{\sfrac{1}{q_0}}.
\ee
Therefore, we can bound the left side of \eqref{e.inequality} from below by
\begin{equation}\label{e.N-lower}
\begin{split}
	N &e^{\eps\langle x_0\rangle}
		\left[ -\frac 3 {2s}  t_0^{-\frac{3}{2s}-1} + \eps v_0\cdot \frac {x_0} {\langle x_0 \rangle}\left(1+t_0^{-\frac{3}{2s}}\right) \right]\\
		& \geq N e^{\eps\langle x_0\rangle}\Bigg[-\frac 3 {2s}  t_0^{-\frac{3}{2s}-1}
			- \eps  \left[\frac{\|f\|_{L^\infty_{q_0}([0,T]\times\R^6)}}{h_\eps(z_0)}\right]^{\sfrac{1}{q_0}}(1+t_0^{-\frac{3}{2s}}) \Bigg]. 
\end{split}
\end{equation}

Next, to bound $Q(f,f)(z_0)$ from above, we can repeat the argument from \cite{silvestre2016boltzmann}, replacing $N$ in their argument with $N e^{\eps\langle x_0\rangle}$. Since the extra factor $e^{\eps\langle x_0\rangle}$ is independent of $v$, it does not interact with the collision operator in any way and can be treated as a constant for the purposes of the upper bound on $Q(f,f)(z_0)$. We do not give the details of the quantitative bounds on $Q(f,f)$ since they are the same as in the proof of \cite[Theorem~7.3]{silvestre2016boltzmann}. In the case $\gamma+2s>0$, we obtain
\begin{equation}\label{e.Qffz}
	Q(f,f)(z_0)
		\leq -c_0 h_\eps(z_0)^{1+\frac{2s}{3}}
				\langle v_0\rangle^{\frac{8s}{3} + \gamma}
			+ C_0 h_\eps(z_0)^{1+\frac{\gamma}{3}},
\end{equation}
for constants $c_0, C_0>0$ depending on the constants in \eqref{e.hydro2}. As written in \cite{silvestre2016boltzmann}, the constant $c_0$ depends also on a lower bound for the mass density and an upper bound on the entropy density $\int f \log f \dd v$.  As pointed out in \cite{HST2020lowerbounds}, the pointwise lower bounds of Theorem \ref{t:lower-bounds} can be used instead of the lower mass bound and upper entropy bound to obtain a ``cone of nondegeneracy'' which is needed to obtain the negative term in \eqref{e.Qffz}. The lower mass bound and upper entropy bound are not used anywhere else in \cite{silvestre2016boltzmann}, so they can be discarded. See \cite[Section 4]{HST2020lowerbounds} for the details.

Combining \eqref{e.Qffz} with \eqref{e.inequality} and \eqref{e.N-lower}, we then have
\be
\begin{split}
	- \frac 3 {2s} N e^{\eps\langle x_0 \rangle}  t_0^{-\frac{3}{2s}-1} &\leq \eps (N e^{\eps\langle x_0\rangle})^{1-\frac{1}{q_0}} (1+t_0^{-\frac{3}{2s}})^{1-\frac{1}{q_0}} \|f\|_{L^\infty_{q_0}}^{\sfrac{1}{q_0}}\\
&\quad - c_0 (N e^{\eps\langle x_0\rangle})^{1+\frac{2s}{3}} (1+t_0^{-\frac{3}{2s}})^{1+\frac{2s}{3}} \langle v_0 \rangle^{\frac{8s}{3}+\gamma}\\
&\quad + C_0 (N e^{\eps\langle x_0\rangle})^{1+\frac\gamma3} (1+t_0^{-\frac{3}{2s}})^{1+\frac\gamma3}.
\end{split}
\ee
Comparing exponents of $N e^{\eps\langle x_0\rangle}$, $t_0$, and $\langle v_0\rangle$, we see that the negative term on the right dominates all other terms if $N$ is large enough, leading to a contradiction. Since $e^{\eps\langle x_0\rangle} \geq 1$, the choice of $N$ can be made independently of $\eps$. 

In the case $\gamma + 2s \leq 0$, the proof of \cite[Theorem~7.3]{silvestre2016boltzmann} gives instead
\be
	Q(f,f)(z_0)
		\leq -c_0 h_\eps(z_0)^{1+\frac{2s p}{3}} + C_0 h_\eps(z_0)^{2-\frac{p(3+\gamma)}{3}},
\ee
and the inequality at $z_0$ reads
\be
\begin{split}
	- \frac 3 {2s} N e^{\eps\langle x_0 \rangle}  t_0^{- \frac{3}{2s}-1}
		&\leq \eps (N e^{\eps\langle x_0\rangle})^{1- \frac{1}{q_0}} (1+t_0^{-\frac{3}{2s}})^{1-\frac{1}{q_0}} \|f\|_{L^\infty_{q_0}}^{\sfrac{1}{q_0}}\\
		&\quad - c_0 (N e^{\eps\langle x_0\rangle})^{1+\frac{2sp}{3}} (1+t_0^{-\frac{3}{2s}})^{1+\frac{2sp}{3}} \\
		&\quad + C_0 (N e^{\eps\langle x_0\rangle})^{2-\frac{p(3+\gamma)}{3}} (1+t_0^{-\frac{3}{2s}})^{2-\frac{p(3+\gamma)}{3}}.
\end{split}
\ee
Because of the choice $p>\sfrac{3}{(3 + \gamma + 2s)}$, the negative term on the right dominates if $N$ is sufficiently large, as in the previous case, leading to a contradiction.

In both cases, we have shown $f< h_\eps$ in $[0,T]\times\R^6$. Sending $\eps\to 0$, we obtain the statement of the proposition.
\end{proof}

Using Proposition \ref{p:Linfty} to bound the $L^\infty_{t,x} L^{3/(3+\gamma)+\eta}_v$ norm of $f$ appearing in our Theorem \ref{t:decay1}, we obtain 
an estimate on $\|f(t)\|_{L^\infty_q(\R^6)}$ that is ``good'' away from $t=0$. Near $t=0$, we use the propagation of $L^\infty_{q_0}$ bounds for a short time depending only on the initial data. The following lemma adapts \cite[Lemma 4.3(a)]{HST2022irregular} to the non-spatially-periodic setting:

\begin{lemma}\label{l:small-times}
For any $q>3 + \gamma + 2s$, and any solution $f\geq 0$ of the Boltzmann equation \eqref{e.boltzmann} on $[0,T]\times\R^6$ such that the initial data $f_{\rm in}$ lies in $L^\infty_{q}(\R^6)$, one has 
\be
\|f(t)\|_{L^\infty_{q}(\R^6)} \lesssim \|f_{\rm in}\|_{L^\infty_{q}(\R^6)} 
\qquad\text{ for all }
t \leq \min\left\{T, \frac{1}{C\|f_{\rm in}\|_{L^\infty_{q}(\R^6)}}\right\},
\ee
where $C>0$ is a constant depending on $q$, $\gamma$, $s$, and $c_b$.
\end{lemma}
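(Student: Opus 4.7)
The plan is to derive this lemma as a short-time corollary of Lemma \ref{l:gain} via a continuity-based bootstrap in time. The key observation is that Lemma \ref{l:gain} applied with $q_1 = q_2 = q$ (admissible since $q > 3 + \gamma + 2s$) produces
\[
\|f(t)\|_{L^\infty_q(\R^6)} \leq \|f_{\rm in}\|_{L^\infty_q(\R^6)} \exp\bigl(C_0 \,t\, \|f\|_{L^\infty_q([0,t]\times\R^6)}\bigr),
\]
where $C_0 > 0$ depends only on $q,\gamma,s,c_b$. On a time scale where $\|f\|_{L^\infty_q}$ can be bootstrapped to stay within a fixed multiple of $\|f_{\rm in}\|_{L^\infty_q}$, the exponent is $O(1)$ and the bound closes.

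To make this precise, I would write $K := \|f_{\rm in}\|_{L^\infty_q(\R^6)}$, fix the universal constant $c_0 = \tfrac{1}{2}\ln(3/2)$, and set $T^* := \min\{T,\, c_0/(C_0 K)\}$. The next step is to introduce
\[
\tau^* := \sup\bigl\{ t \in [0, T^*] : \|f\|_{L^\infty_q([0,t]\times\R^6)} \leq 2K \bigr\},
\]
which is strictly positive since $f_{\rm in} \in L^\infty_q$ and $f$ is classical. If $\tau^* = T^*$, the bound is proved with the claimed constant $C = C_0/c_0$. Otherwise, continuity forces $\|f\|_{L^\infty_q([0,\tau^*]\times\R^6)} = 2K$, and Lemma \ref{l:gain} applied on $[0,\tau^*]$ gives
\[
\|f(t)\|_{L^\infty_q(\R^6)} \leq K\exp(2C_0\, t\, K) \leq K e^{2c_0} = \tfrac{3}{2}K < 2K
\]
for every $t \in [0,\tau^*]$. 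This strict inequality, together with continuity, would let one extend the interval of validity past $\tau^*$, contradicting its definition as a supremum. Hence $\tau^* = T^*$ and the argument closes.

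The only real obstacle is the continuity of $t \mapsto \|f(t)\|_{L^\infty_q(\R^6)}$, used twice in the bootstrap: to conclude $\|f\|_{L^\infty_q([0,\tau^*])} = 2K$ when $\tau^* < T^*$, and to extend the interval once the strict inequality is known. This should follow from the standing assumption that $f$ is a classical solution in $C^{2s+\alpha}_\ell$ with $f_{\rm in} \in L^\infty_q$; if necessary, one could regularize the weight $\vv^{-q}$ by a small $\varepsilon > 0$ and pass to the limit, but I expect this step to be unnecessary.
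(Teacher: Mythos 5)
Your proposal follows essentially the same route as the paper: apply Lemma~\ref{l:gain} with $q_1=q_2=q$ to get $\|f(t)\|_{L^\infty_q}\le\|f_{\rm in}\|_{L^\infty_q}\exp(Ct\|f\|_{L^\infty_q([0,t]\times\R^6)})$, then close the estimate by a short-time bootstrap; the paper simply outsources the final bootstrap step to a pre-packaged abstract lemma (\cite[Lemma 4.2]{HST2022irregular}) for a continuous increasing $H(t)$ satisfying $H(t)\le Ae^{BtH(t)}$, whereas you unpack it directly via a supremum-of-good-times argument. The continuity of $t\mapsto\|f\|_{L^\infty_q([0,t]\times\R^6)}$ that you flag is the same hypothesis the cited lemma requires, so your version carries no extra gap.
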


\begin{proof}
Define $H(t) = \|f\|_{L^\infty_{q}([0,t],\R^6)}$. From Lemma \ref{l:gain} with $q_1 = q_2 = q$, we have $H(t) \leq H(0) e^{Ct H(t)}$ for all $t\in [0,T]$. We complete the proof by applying \cite[Lemma 4.2]{HST2022irregular}, which states that for any continuous, increasing function $H:[0,T]\to \R_+$ satisfying $H(t) \leq A e^{BtH(t)}$ for some $A,B>0$, there holds $H(t) \leq eA$ up to time $\min\{T, (eAB)^{-1}\}$.
\end{proof}

\subsection{Proof of \Cref{t:decay2}}

%We are now ready to prove Theorem \ref{t:decay2}. 
\begin{proof}
Lemma \ref{l:small-times} gives a bound on $\|f(t)\|_{L^\infty_q(\R^6)}$ that depends only on the $L^\infty_q$-norm of the initial data, as long as $t\leq C/\|f_{\rm in}\|_{L^\infty_q(\R^6)}$. For $t > C/\|f_{\rm in}\|_{L^\infty_q(\R^6)}$, we apply Proposition \ref{p:Linfty} to obtain a bound on $\|f(t)\|_{L^\infty(\R^6)}$ depending on the constants in the statement of Theorem \ref{t:decay2}. By interpolating this $L^\infty$ bound with the uniform bound $\int_{\R^3} f(t,x,v) \dd v \leq M_0$, we remove the dependence on the $L^\infty_{t,x}L^{3/(3+\gamma)+\eta}_v$ norm appearing in Theorem \ref{t:decay1}. This establishes Theorem \ref{t:decay2}.
\end{proof}

%\section{Continuation criterion}

%\section{Proof of the continuation criterion \Cref{c:cont}}
%\stan{Maybe move the following proof to the intro, right after the statement of \Cref{c:cont}?} \CH{I'M FINE EITHER WAY}

\subsection{Proof of \Cref{c:cont}}

In this subsection, we use Theorem \ref{t:decay2} to derive the continuation criterion of \Cref{c:cont}. 

\begin{proof}
%Finally, we prove the continuation criterion, \Cref{c:cont}.
We first recall the local existence theorem of \cite[Theorem 1.1]{HST2022irregular}, which states that for any $f_{\rm in} \in L^\infty_q(\R^6)$ with $q>3+2s$ satisfying a uniformly positive lower bound $f_{\rm in}(x,v)\geq c_m$ in some ball in $(x,v)$ space, there exists a classical solution $f\geq 0$ of the Boltzmann equation on $[0,T_f]\times\R^6$ with $f(0,x,v) = f_{\rm in}$. Importantly, this time of existence $T_f$ depends only on $q$ and $\|f_{\rm in}\|_{L^\infty_q(\R^6)}$, and does not depend quantitatively on the lower bounds satisfied by $f_{\rm in}$. 

Given any solution on $[0,T]\times\R^6$ satisfying the conditional bounds \eqref{e.conditional2}, with initial data well-distributed and lying in $L^\infty_q(\R^6)$, Theorem \ref{t:decay2} states that the $L^\infty_q(\R^6)$ norm of $f$ remains bounded by some finite constant $K$ on $[0,T]$. The necessary lower bounds are propagated to time $T$ via %\cite[Theorem 1.2]{HST2020lowerbounds} (which is quoted below as Theorem \ref{t:lower-bounds} in the current article) 
Theorem \ref{t:lower-bounds} 
and the upper bound from \eqref{e.conditional2}. Therefore, we can apply \cite[Theorem 1.1]{HST2022irregular} with initial data $f(T,x,v)$ to extend the solution past $T$. We conclude $T$ cannot be the maximal time of existence.
\end{proof}

\section{The main estimate on the collision kernel: \Cref{l:Qs-pointwise-base}}%Iterative pointwise bounds on $Q_{\rm s}$}
\label{s.main_lemma}

In this section, we establish our main estimate \Cref{l:Qs-pointwise-base}, as well as \Cref{l:v-alpha}. 
%prove two pointwise bounds on $Q_{\rm s}(f, \langle \cdot \rangle^{-q})$ for various values of $q$. The first will establish a
%base case with $q=4-\gamma$ that only depends on an upper bound for the mass and energy density of $f$. The second will iterate by assuming
%that $f$ also satisfies an $L^\infty_k$ weighted pointwise bound, which will allow $q$ to become larger.
For the first result, we make use of the density of ``simple'' functions in $L^1_2$, defined as follows.
\begin{definition}\label{d:simple}
Given a fixed $\underline v \in \R^3$, we say that a function $f: \R^3 \rightarrow \R$ is {\bf simple with base point $\underline v$} if there is a positive integer $m$, a list of disjoint open balls $\lbrace B_{r_i}(v_i) \rbrace_{i=1}^m$ such that
\be
	0 < r_i < \min\Big(1, \frac{|\underline v-v_i|}{|\underline v|}\Big),
\ee
and a list of values $\lambda_1, ... , \lambda_m$ such that
\be
	f(v) = \sum_{i=1}^m \lambda_i \1_{B_{r_i}(v_i)}(v).%\1_{B_{r_i}(v_i)}(v).
\ee
\end{definition}
 Our definition of ``simple'' is slightly different than is standard in analysis: we use only indicator functions of balls and each ball has a restriction on its radius.  As such, it is not a linear space.  However, our class of simple functions is dense in $L^p_k(\R^3)$:
\begin{proposition}\label{p:simple-L12-density}
Fix any $\underline v\in \R^3$.  The set of all simple functions with base point $\underline v$ forms a dense subset of $L^p_k(\R^3)$ for any $1 \leq p < \infty$ and any $k$. %[FUN FACT: IT'S NOT A SUBSPACE SINCE YOU CAN'T ADD 'EM]
\end{proposition}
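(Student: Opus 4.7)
The plan is to carry out a standard four-step reduction ending in a Vitali covering argument. Since $\langle v\rangle^{pk}$ is continuous and bounded above and below on compact sets, $L^p_k(\R^3)$ agrees (with equivalent norm) with the $L^p$ space of the weighted Radon measure $\langle v\rangle^{pk}\dd v$, so bounded, compactly-supported functions are dense in it. Moreover, since $\{\underline v\}$ has Lebesgue measure zero, I may further restrict attention to $f\in L^\infty$ supported in some compact set $K\subset \R^3\setminus\{\underline v\}$ at a positive distance $\delta>0$ from $\underline v$.

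For such a localized $f$, every sufficiently small ball centered near $K$ is automatically \emph{admissible} in the sense of \Cref{d:simple}. Indeed, on the $1$-thickening $K^1$ of $K$, the ratio $|\underline v - v|/|\underline v|$ is bounded below by some positive $\delta'$ (the case $\underline v = 0$ reduces the condition to $r_i<1$ and is even easier), so any ball $B_{r_i}(v_i)$ with $v_i\in K^1$ and $r_i<r_* := \min(1,\delta')$ is admissible. I then discretize the range: for fixed $\varepsilon>0$, write $f = \sum_{|n|\leq N} n\varepsilon\,\1_{A_n} + O(\varepsilon)$ with $A_n = \{n\varepsilon \leq f < (n+1)\varepsilon\} \subset K$, a finite disjoint collection of bounded measurable sets, with remainder of size $O(\varepsilon)$ in $L^\infty$ and hence in $L^p_k$ on the bounded domain $K$.

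Next, by inner regularity of Lebesgue measure I replace each $A_n$ by a compact subset $F_n\subset A_n$ with $|A_n\setminus F_n|$ arbitrarily small; since the $F_n$ are pairwise disjoint compact sets, they are mutually separated, so I can pick pairwise disjoint open neighborhoods $U_n \subset K^1$ of the $F_n$'s. The Vitali covering theorem then provides, for any prescribed radius bound $r<r_*$, a finite family of pairwise disjoint admissible balls $\{B_{r_i^{(n)}}(v_i^{(n)})\}_i \subset U_n$ covering $F_n$ up to arbitrarily small measure. Balls from different $n$ are automatically disjoint since the $U_n$ are, so assembling $\sum_n n\varepsilon \sum_i \1_{B_{r_i^{(n)}}(v_i^{(n)})}$ yields a simple function with base point $\underline v$ in the sense of \Cref{d:simple}, and summing the errors from all steps produces an approximation within $O(\varepsilon)$ of $f$ in $L^p_k$. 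The main technical ingredient is the Vitali covering theorem; the bespoke radius constraint of \Cref{d:simple}, which is the only nonstandard feature, is handled uniformly once the support has been localized away from $\underline v$.
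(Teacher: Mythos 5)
The paper omits this proof as ``standard,'' and your four-step reduction (pass to bounded compactly supported $f$ supported away from $\underline v$; discretize the range; inner regularity to disjoint compacta with disjoint open neighborhoods; Vitali covering with small admissible balls) is a correct instantiation of it. One small slip: you cannot always use the $1$-thickening $K^1$, because when $\delta := \dist(\underline v, K) \leq 1$ the set $K^1$ may contain $\underline v$ and the claimed lower bound $\delta'$ fails. Replace $K^1$ by the $\delta/2$-thickening and cap the Vitali radii by $\min\bigl(1, \delta/(2|\underline v|), \delta/2\bigr)$ (interpreting $\delta/(2|\underline v|)$ as $+\infty$ when $\underline v = 0$); then every covering ball with center in $K^{\delta/2}$ and radius below this cap is admissible in the sense of \Cref{d:simple} and stays clear of $\underline v$. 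The rest is sound: in particular you correctly identified that the only nonstandard feature of \Cref{d:simple}, the $v_i$-dependent radius constraint, becomes a uniform and harmless condition once the support has been localized a fixed positive distance away from $\underline v$.
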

 The proof of Proposition \ref{p:simple-L12-density} follows standard methods and is omitted.

In \Cref{d:simple} above, it is important that the individual terms in the sum have disjoint supports as this allows us to write
\be
	\| f \|_{L^p_k(\R^3)}^p
		= \sum_{i=1}^m \| \lambda_i \1_{B_{r_i}(v_i)} \|_{L^1_k(\R^3)}^p
		\approx \sum_{i=1}^m |\lambda_i|^p \langle v_i \rangle^k r_i^3,
\ee
where the constants implied by the ``$\approx$'' symbol do not depend on $p$, $k$, the $v_i$, the $r_i$, or (importantly) on $m$. 
This precision will be needed for the proof of Lemma \ref{l:Qs-pointwise-base}. % base case result below. %[NOT-SO-FUN FACT: THIS IS THE MAIN REASON WHY A SIMILAR APPROXIMATION WON'T WORK FOR $L^\infty_k$-SPACES]

\subsection{Proof of \Cref{l:Qs-pointwise-base}}\label{s:Qs-pointwise-base-proof}
We begin by first stating a weaker lemma that will be bootstrapped up to \Cref{l:Qs-pointwise-base}.  

\begin{lemma}\label{l:preliminary_Qs}
	Under the assumptions of \Cref{l:Qs-pointwise-base}, we have
	\be\label{e:dfjk693}
		Q_s(f,\langle \cdot\rangle^{-q_0})(v)
			\lesssim \vv^{2 + \gamma} \KO{f}
			\qquad\text{ for all } v.
	\ee
	Recall the definition \eqref{eq:K0_definition} of $\KO{\cdot}$.
\end{lemma}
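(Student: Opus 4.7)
The plan is to reduce the estimate to a pointwise bound on the convolution
\[
	I(v) := \int_{\R^3} f(v-w) |w|^{\gamma+2s} \dd w,
\]
and then invoke the weighted convolution estimate in \Cref{s:conv}.

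First I would apply \Cref{l:Qs-interp} with $g(v) = \langle v \rangle^{-q_0}$. Since $\|g\|_{L^\infty(\R^3)} \lesssim 1$ and $\|D_v^2 g\|_{L^\infty(\R^3)} \lesssim 1$ (with implied constants depending on $q_0$), the interpolation estimate immediately yields
\[
	|Q_s(f,\langle \cdot \rangle^{-q_0})(v)| \lesssim I(v).
\]
So it suffices to establish the pointwise convolution bound $I(v) \lesssim \vv^{2+\gamma} \KO{f}$ for all $v$.

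For this step, I would split the integral defining $I(v)$ based on the relative size of $|u|$ and $|v|$. On $\{|u|\leq \vv/2\}$ one has $|v-u|\approx \vv$, which allows one to factor out $\vv^{\gamma+2s}$; on $\{|u|>\vv/2\}$ one has $\langle u\rangle \gtrsim \vv$, which combines favorably with the weight $\langle u\rangle^{q_0-2+\gamma}$ appearing in the definition of $\KO{f}$ to absorb excess factors of $\vv$. When $\gamma + 2s \geq 0$, these two regimes together give the cleaner bound $I(v)\lesssim \vv^{\gamma+2s}\KO{f}\leq \vv^{2+\gamma}\KO{f}$, using $s \leq 1$ and $q_0 \geq 2+2s$. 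When $\gamma+2s<0$, the singular subregion $\{|v-u|\leq 1\}$ needs to be handled separately with H\"older's inequality against the $L^{\pk}$ piece of $\KO{f}$, with local integrability of $|w|^{(\gamma+2s)p_0'}$ ensured precisely by the condition $\pk > 3/(3+\gamma+2s)$ from~\eqref{e.p0def}.

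The hard part is the regime $\gamma+2s<0$, where the interplay between the singular kernel, the (possibly negative) weight exponent $q_0-2+\gamma$, and the use of the $L^{\pk}$ versus $L^1$ norms requires careful bookkeeping to produce the stated uniform bound. The exponent $2+\gamma$ on the right-hand side is a rather pessimistic power of $\vv$ (the worst of several regimes); its role is to serve as a crude starting point for the bootstrap up to the much sharper estimate in \Cref{l:Qs-pointwise-base}, rather than to be tight on its own.
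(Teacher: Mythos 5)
Your main idea --- applying \Cref{l:Qs-interp} with $g = \vvd^{-q_0}$ \emph{globally} and then invoking the weighted convolution estimate of \Cref{l:convolution} --- is valid and genuinely simpler than the paper's proof, which restricts the interpolation lemma to the compact region $|v| < 10$ and, for $|v| \geq 10$, runs a Carleman-type decomposition (a near-diagonal piece $J_1$ over $B_{2|v|}(v)$, estimated by a symmetric Taylor expansion plus \Cref{l:K-upper-bound-2}, and a far piece $J_2$). Your shortcut buys a shorter argument at the cost of a cruder exponent on $\vv$: what \Cref{l:Qs-interp} plus \Cref{l:convolution} actually deliver is
\[
	\Qs(f,\vvd^{-q_0})(v) \lesssim \vv^{(\gamma+2s)_+}\KO{f},
\]
not $\vv^{2+\gamma}\KO{f}$.

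There is a genuine gap in going from that to the stated exponent. When $\gamma < -2$ (which forces $\gamma + 2s < 0$ since $s<1$), one has $(\gamma+2s)_+ = 0$ while $2 + \gamma < 0$, so $\vv^{0} > \vv^{2+\gamma}$ for large $|v|$ and your bound is strictly weaker than the claimed one. Your remark that $2+\gamma$ is a ``pessimistic'' power --- the worst of several regimes --- is backwards in exactly this regime: there $\vv^{2+\gamma}$ is a \emph{decaying} weight, and the argument you sketch does not recover it. In particular, the step where $\langle u\rangle \gtrsim \vv$ is said to ``combine favorably'' with the weight $\langle u\rangle^{q_0-2+\gamma}$ has a sign problem: the exponent $q_0 - 2 + \gamma$ is permitted to be negative under the hypothesis $q_0 \geq \min\{3+\gamma+2s,\,2+2s\}$ (take $\gamma$ near $-3$ and $q_0$ near the minimum), and then the inequality $\langle u\rangle^{q_0-2+\gamma}\gtrsim \vv^{q_0-2+\gamma}$ needed to extract a favorable factor of $\vv$ runs the wrong way on $\{|u|>\vv/2\}$.

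That said, the practical damage here is essentially nil. All that \Cref{l:Qs-pointwise-base} extracts from \Cref{l:preliminary_Qs} is that $\Qs(f,\vvd^{-q_0})(v)$ be controlled by $\vv^\beta\KO{f}$ for \emph{some} fixed $\beta$: in \eqref{e.K_0_approximation} the simple-function approximant is then chosen with $\KO{f-\tilde f}\leq\vv^{-q_0-\beta}\KO{f}$, which \Cref{p:simple-L12-density} allows for any $\beta$ (the point $v$ is fixed first). So if you restate the lemma with exponent $(\gamma+2s)_+$ (or any fixed polynomial power), your shorter proof is correct and everything downstream is unchanged. But as literally stated, with $\vv^{2+\gamma}$ on the right-hand side, your argument does not establish the conclusion in the very soft potentials range $\gamma<-2$.
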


The importance of \Cref{l:preliminary_Qs} is that it allows us to, by density, first prove \Cref{l:Qs-pointwise-base} on simple functions (defined in \Cref{d:simple}).  Its proof is in \Cref{s.preliminary_Qs}.  With this reduction in hand, we now proceed with the key estimate of \Cref{l:Qs-pointwise-base}.

\begin{proof}[Proof of~\Cref{l:Qs-pointwise-base}]
Fix any $v \in \R^3$.  Without loss of generality, we may assume that
\be\label{e.v_0}
	v = (0,0,|v|).
\ee
This will be convenient for some of the geometry, below.  Observe that, if $|v| \leq 10$, the very rough bound in \Cref{l:preliminary_Qs} is sufficient to prove \eqref{eq:Qs-pointwise-base}, since $\vv^{-q_0} \approx \vv^{2+\gamma} \approx 1$ on a fixed compact set. For the rest of the proof, we shall assume that $|v| > 10$, which also means that $|v| \approx \vv$. %All implied constants are universal.  

%Now, let $f$ be as in the hypothesis of the lemma. Without loss of generality, we may assume that $v = (0,0,|v|)$; $v$ is now fixed. 
Choose a simple function with base point $v$,
\be\label{e.tilde_f}
	\tilde f (w) = \sum_{i=1}^m \lambda_i \1_{B_{r_i}(v_i)}(w),
\ee
such that, by \Cref{p:simple-L12-density},
\begin{equation}\label{e.K_0_approximation}
	\KO{f-\tilde f} \leq \vv^{-q_0-2 - \gamma} \KO{f}.
\end{equation}
%This is possible due to \Cref{l:preliminary_Qs}. 
We may also assume that each $\lambda_i$ is nonnegative.  Indeed, due to the nonnegativity of $f$, the approximation~\eqref{e.K_0_approximation} only becomes better when we remove all negative terms in the sum~\eqref{e.tilde_f}. Then, by Lemma \ref{l:preliminary_Qs} and \eqref{e.K_0_approximation},
\begin{equation}\label{eq:Qs-simple-approx}
\begin{split}
Q_{\rm s}(f, \langle \cdot \rangle^{-q_0})(v)
	&= Q_{\rm s}(f-\tilde f, \langle \cdot \rangle^{-q_0})(v) + Q_{\rm s}(\tilde f, \langle \cdot \rangle^{-q_0})(v) \\
	&\lesssim \langle v \rangle^{2+ \gamma} \KO{f-\tilde f} +
Q_{\rm s} \Big( \sum_{i=1}^m \lambda_i \1_{B_{r_i}(v_i)}, \langle \cdot \rangle^{-q_0} \Big)(v) \\
	&\leq \vv^{-q_0}\KO{f} + \sum_{i=1}^m \lambda_i Q_{\rm s}(  \1_{B_{r_i}(v_i)}, \langle \cdot \rangle^{-q_0})(v).
\end{split}
\end{equation}
Let us point out that, if we can establish that, for all $i$,
\be\label{e.c100201}
	Q_s(\1_{B_{r_i}(v_i)}, \vvd^{-q_0})
		\lesssim \vv^{-q_0} \KO{\1_{B_{r_i}(v_i)}},
\ee
we are finished.  Indeed, putting together~\eqref{eq:Qs-simple-approx} and~\eqref{e.c100201} and recalling~\eqref{e.K_0_approximation}, we find
\be
	\begin{split}
		\Qs(f, \langle \cdot \rangle^{-q_0})(v)
			&\lesssim \vv^{-q_0}\KO{f} + \vv^{-q_0} \sum_{i=1}^m \KO{\lambda_i \1_{B_{r_i}(v_i)}}
			\\&
			\approx \vv^{-q_0}\KO{f} + \vv^{-q_0} \KO{\tilde f}
			\\&
			\lesssim \vv^{-q_0} \left( \KO{f} + \KO{\tilde f - f} + \KO{f} \right)
			\leq 3 \vv^{-q_0} \KO{f}.
	\end{split}
\ee

The remainder of the proof is devoted to establishing~\eqref{e.c100201}. 
%From now on, we write $K_0$ in place of $\KO{f}$.  
%Each term in the sum above %part of the collision operator is essentially 
%is given by 
First,  using formula \eqref{e.Qs} for $Q_{\rm s}$, we decompose $\Qs$ into three integrals: 
\begin{equation}\label{eq:Qs-for-simple}
\begin{split}
	Q_{\rm s}&(\1_{B_{r_i}(v_i)}, \langle \cdot \rangle^{-q_0})(v)
		= \int_{\R^3} (\vvp^{-q_0} - \vv^{-q_0}) K_{\lambda_i \1_{B_{r_i}(v_i)}}(v,v') \dd v'
%		\\&
%		\approx \int_{\R^3} \frac{\langle v' \rangle^{-q_0}-\vv^{-q_0}}{|v-v'|^{3+2s}} \int_{w \perp (v-v')} \lambda_i \1_{B_{r_i}(v_i)}(v+w) |w|^{\gamma+2s+1} \dd w \dd v'
		=: I_1 + I_2 + I_3.
\end{split}
\end{equation}
Here we have defined $I_1$, $I_2$, and $I_3$ as the integrals on the regions
\be
	R_{\rm sing} = B_{\sfrac{|v|}4}(v),
	\quad
	R_0 = B_{\sfrac{|v|}{4}}(0),
	\quad\text{ and }\quad
	R_\infty = \R^3 \setminus (R_{\rm sing} \cup R_0),
\ee
respectively. 
Roughly, $R_{\rm sing}$ captures the singularity at $v'=v$, $R_0$ is the region around $v'=0$ where we cannot lean on the smallness of $\vvd^{-q_0}$, and $R_\infty$ is the unbounded region.

Let us prepare for this argument by proving a useful bound regarding $w$ that holds when $v+w \in B_{r_i}(v_i)$.  We claim that
\be\label{e.c092802}
	|w| \approx |v-v_i|.
\ee
Indeed, recalling the restriction on $r_i$ in \Cref{d:simple} and that $|v| \geq 10$, we find that
\be
	\begin{split}
		|w| &= |(v + w - v_i) - (v-v_i)|
			\leq |v+w-v_i| + |v-v_i|
			< r_i + |v-v_i|
			\\&
			\leq \frac{|v-v_i|}{|v|} + |v-v_i|
			\lesssim |v-v_i|.
	\end{split}
\ee
 For the matching lower bound, we similarly have
\be
	\begin{split}
		|w| &	\geq |v-v_i| - |v+w-v_i| \\
			&\geq |v-v_i| -  r_i 
			\\&
			\geq |v-v_i| - \frac{|v-v_i|}{|v|} 
			\gtrsim |v-v_i|,
	\end{split}
\ee
since $1-1/|v| \geq \frac 9 {10}$.  Thus,~\eqref{e.c092802} is established.

The integrals $I_1$ and $I_3$ are the easiest to bound, so we start our argument by estimating these.  Consider $I_1$, the integral over the singular region $R_{\rm sing}$.  Taylor's theorem implies that
\be \label{e.taylor}
	\vvp^{-q_0} - \vv^{-q_0} = -q_0 v \vv^{-q_0-2}(v'-v) + E(v,v')
\ee
where, in $R_{\rm sing}$,
\be
	|E(v,v')| \lesssim \vv^{-q_0-2} |v-v'|^2.
\ee
%to write $\langle v' \rangle^{-q_0} - \vv^{-q_0} = (\gamma-4) v \vv^{-q_0-2} + E(v,v')$ where $|E(v,v')| \lesssim \vv^{-q_0-2} |v-v'|^2$ for all $v' \in B_{|v|/4}(v)$. 
By the symmetry of the collision kernel  ($K_f(v,v+w) = K_f(v,v-w)$), the first order term in the expansion \eqref{e.taylor} vanishes from the integral $I_1$. Lemma \ref{l:K-upper-bound-2} then implies
\begin{equation}\label{eq:Qs-simple-I1}
\begin{split}
	|I_1|
		&\lesssim \vv^{-q_0-2}
			\int_{B_{|v|/4}(v)}
				|v-v'|^2 K_{ \1_{B_{r_i}(v_i)}}(v,v') \dd v'
		\\&
		\lesssim \vv^{-q_0}|v|^{-2s}
			\int _{\R^3}  \1_{B_{r_i}(v_i)}(v+w) |w|^{\gamma+2s} \dd w
		\\&
 \lesssim \vv^{-q_0-2s} \vv^{(\gamma+2s)_+} \KO{\1_{B_{r_i}(v_i)}} \lesssim \vv^{-q_0+\max\{\gamma,-2s\}} \KO{\1_{B_{r_i}(v_i)}}.
%		\approx \vv^{-q_0-2s}
%			\int_{B_{r_i}(v_i)}  |v-v_i|^{\gamma+2s} \dd w
%		\approx \vv^{-q_0-2s}  r_i^3 |v_i - v |^{\gamma+2s}.
\end{split}
\end{equation}
 In the second-to-last step, we used the convolution estimate of Lemma \ref{l:convolution} with $f = \1_{B_{r_i}(v_i)}$. (To apply Lemma \ref{l:convolution}, we used $1 \lesssim \ww^{q_0-2-2s}$.) %~\eqref{e.c092802}.

Next, we consider $I_3$, the integral over the unbounded region $R_\infty$.  On this domain, $\langle v' \rangle^{-q_0} \lesssim \vv^{-q_0}$. By Lemma \ref{l:K-upper-bound}, we get %(following a similar analysis for $I_1$)
\begin{equation}\label{eq:Qs-simple-I3}
\begin{split}
	|I_3|
		&\lesssim \vv^{-q_0} \int_{R_\infty} K_{ \1_{B_{r_i}(v_i)}} (v,v') \dd v'
		\leq \vv^{-q_0} \int_{B_{\sfrac{|v|}{4}}(v)^c} K_{ \1_{B_{r_i}(v_i)}} (v,v') \dd v'
		\\&
%		= \vv^{-q_0} \sum_{k=-2}^\infty \int_{B_{2^{k+1}|v|}(v) \setminus B_{2^{k}|v|}(v)} K_{ \1_{B_{r_i}(v_i)}} (v,v') \dd v'
	%	\\&
		\lesssim \vv^{-q_0}   |v|^{-2s} \int_{\R^3}  \1_{B_{r_i}(v_i)}(v+w) |w|^{\gamma+2s} \dd w
		\\&
		 \lesssim \vv^{-q_0+\max\{\gamma,-2s\}} \KO{\1_{B_{r_i}(v_i)}},
	%	\approx \vv^{-q_0} |v|^{-2s} \int_{B_{r_i}(v_i)}  |v-v_i|^{\gamma+2s} \dd w 
	%	 \approx \vv^{-q_0-2s}  r_i^3 |v - v_i|^{\gamma+2s}.
\end{split}
\end{equation}
using Lemma \ref{l:K-upper-bound} exactly as in \eqref{eq:Qs-simple-I1}. %In the second-to-last step, we used~\eqref{e.c092802} again.

%Although it is not yet clear, we should consider $|v-v_i|^{\gamma+2s} \sim \vv^{\gamma+2s}$, and we see that
%the pointwise decay seen in $I_1$ and $I_3$ has actually \emph{improved} from $-q_0$ to $-q_0 +\gamma$.  The estimate of $I_2$ will not be as profitable.

The estimate of $I_2$ involves a somewhat complicated analysis of the vectors $v_i$, $v'$, and $v$.  Let us begin by defining the set of $v'$ for which the $w$-integral in the definition \eqref{e.Qs} of $K_f$ is nontrivial; that is all $v'$ such that $B_{r_i}(v_i)$ intersects $v + (v-v')^\perp$.  Let
\be \label{e:Didef}
	\mathcal{D}_i := \lbrace v' \in R_0 : \text{ there is some $w \perp (v-v')$ such that $v+w \in B_{r_i}(v_i)$ } \rbrace.
\ee
The key estimates are: 
\be\label{e.c100501}
	\int_{\cD_i} \vvp^{-q_0} \dd v'
		\lesssim \frac{|v| r_i}{|v_i - v|}
\ee
and, whenever $\cD_i \neq \emptyset$,
\be\label{e.c092903}
	|v_i - v|
		\approx \langle v_i\rangle
		\approx | v_i |
		\geq \frac{|v|}{4}.
\ee
%{\color{blue} Heuristically, \eqref{e.c092903} says that $|v|$ cannot be too large compared to $|v_i|$, if }
To aide the reader, we postpone establishing~\eqref{e.c100501}-\eqref{e.c092903} until the end of the proof.  We now proceed with the estimate of $I_2$.

We begin the bound of $I_2$ by noting that, since $\cD_i \subset R_0$, 
\be
	\vvp^{-q_0} > \vv^{-q_0}
		\quad\text{ and }\quad
	|v-v'| \approx |v| \approx \vv
	\qquad\text{ for all } v' \in \cD_i.
\ee
Hence,
\be
\begin{split}
	|I_2|
		&\approx \int_{\cD_i} \frac{\langle v' \rangle^{-q_0}-\vv^{-q_0}}{|v-v'|^{3+2s}}
\int_{w \perp (v-v')}  \1_{B_{r_i}(v_i)} (v+w) |w|^{\gamma+2s+1} \dd w \dd v' 
		\\&
		\lesssim \int_{\cD_i} \frac{\langle v' \rangle^{-q_0}}{\vv^{3+2s}}
\int_{w \perp (v-v')}  \1_{B_{r_i}(v_i)} (v+w) |w|^{\gamma+2s+1} \dd w \dd v'.
\end{split}
\ee
Then, recalling~\eqref{e.c092802}, we find
\be\label{e.I2elements}
\begin{split}
	|I_2|
		&\lesssim \int_{\cD_i} \frac{\langle v' \rangle^{-q_0}}{\vv^{3+2s}}
\int_{w \perp (v-v')}  \1_{B_{r_i}(v_i)} (v+w) |v-v_i|^{\gamma+2s+1} \dd w \dd v' 
		\\&
		\lesssim \int_{\cD_i} \frac{\langle v' \rangle^{-q_0}}{\vv^{3+2s}}  r_i^2 |v_i - v|^{\gamma+2s+1} \dd v'
		\approx \frac{ r_i^2}{\vv^{3+2s}} |v_i-v|^{\gamma+2s+1} \int_{\cD_i} \langle v' \rangle^{-q_0} \dd v'.
\end{split}
\ee
We now apply~\eqref{e.c100501}, to deduce that
\be
	|I_2|
		\lesssim \frac{ r_i^3}{\vv^{2+2s}} |v_i-v|^{\gamma+2s}
		= r_i^3 \vv^{-q_0} \vv^{q_0 - 2 -2s} |v_i-v|^{\gamma+2s}.
\ee
Using that $q_0\geq 2+2s$, by assumption, and applying~\eqref{e.c092903} yields
		\be\label{e.cI2}
				|I_2| 
					 \lesssim  r_i^3 \vv^{-q_0} \langle v_i \rangle^{q_0 - 2 -2s} |v_i - v|^{\gamma+2s}.
		\ee
	%	\be
		%	\begin{split}
		%	\Qs( \1_{r_i}(v_i), \vvd^{-q_0})
		%		&\leq |I_1| + |I_2| + |I_3|
		%		\lesssim \vv^{-q_0}  r_i^3 \langle v_i \rangle^{q_0 - 2-2s} |v-v_i|^{\gamma + 2s}.
		%	\end{split}
		%\ee
	Applying~\eqref{e.c092903} once again, we deduce that
		\be
			\begin{split}
			%\Qs( \1_{r_i}(v_i), \vvd^{-q_0})
			 |I_2|	&\lesssim \vv^{-q_0}  r_i^3 \langle v_i \rangle^{q_0 - 2-2s} \vvi^{\gamma + 2s} \\
				&\approx \vv^{-q_0} \int_{\R^3} \ww^{q_0 - 2+\gamma} \1_{B_{r_i}(v_i)}(w) \dd w
				\\&
				\lesssim \vv^{-q_0} \|\1_{B_{r_i}(v_i)}\|_{L^1_{q_0-2+\gamma}}
				\lesssim \vv^{-q_0} \KO{\1_{B_{r_i}(v_i)}}.
%				\approx \vv^{-q_0} \int \ww^{q_0 - 2 - 2s} \1_{B_{r_i}(v_i)}(w) |v-w|^{\gamma + 2s} \dd w.
			\end{split}
		\ee	
Combining this with the estimates of $I_1$ and $I_3$, given by~\eqref{eq:Qs-simple-I1} and~\eqref{eq:Qs-simple-I3}, respectively, we have shown~\eqref{e.c100201}, which concludes the proof, up to establishing~\eqref{e.c100501}-\eqref{e.c092903}.

	We first prove~\eqref{e.c092903}.  To begin, we characterize the set of points that lie on a plane of the form $v + (v-v')^\perp$ for some $v' \in R_0$:  %determine the possible values $v_i$ and $v'$ for which the $w$-integral of \eqref{eq:Qs-for-simple} be nonzero. 
	define
	\be
		\mathcal{AC}
			:= \lbrace (u_1, u_2, u_3) \in \R^3 : 15 (u_3-|v|)^2  < u_1^2 + u_2^2 \rbrace .
	\ee
	Here we use the notation $\mathcal{AC}$ for ``anti-cone,'' as this region is the complement of a (double) cone in $\R^3$ with apex at $v$. %two cones, each the reflection of the other, whose vertex meets at $v$.  
	
	Let us justify our heuristic description of $\mathcal{AC}$.  We claim that $v+w \in \mathcal{AC}$ whenever $v' \in R_0 = B_{\sfrac{|v|}{4}}(0)$ and $w \perp (v-v')$.  Indeed, $w\perp (v-v')$ implies that
\be
	|v| w_3 = v\cdot w = v' \cdot w,
\ee
 since we have chosen coordinates such that $v = (0,0,|v|)$. 
Hence, using Cauchy-Schwarz and the condition $|v'| \leq \sfrac{|v|}{4}$, we deduce that
\be
	|w_3| \leq \frac{|w|}{4}.
\ee
Multiplying the $4$ to the left hand side and squaring this, we find
\be
	16w_3^2 \leq w_1^2 + w_2^2 + w_3^2.
\ee
After subtracting $w_3^2$ from both sides and writing $(w_1, w_2, |v| + w_3) = v+w = (u_1,u_2,u_3)$, we deduce
\be
	15 (u_3 - |v|)^2
		= 15 w_3^2
		\leq w_1^2 + w_2^2
		= u_1^2 + u_2^2;
\ee
that is, $v+w \in \mathcal{AC}$.
\AC %% COMMAND TO MAKE FIGURE OF THE ANTI CONE

One can also see this geometrically using the fact that %, we must necessarily have $v+w \in \mathcal{AC}$. The number $15$ in the definition is an optimal ratio that comes from the choice of radius for the ball centered at the origin; specifically, 
$\sfrac{1}{15} = \tan^2 (\arcsin(\sfrac14))$.  Indeed, $\mathcal{AC}$ is the rotation around the $x_3$ axis of a two dimensional cone in the $x_2$-$x_3$ plane with angle $\arcsin(\sfrac14)$, apex at $v$, and oriented along the $x_2$-axis.  Alternatively, it is the complement of a cone (in all variables) of angle opening $\sfrac\pi2 - \arcsin(\sfrac14)$, apex at $v$, and oriented along the $x_3$-axis. 
We note here that working on the smaller ball of radius $\sfrac{|v|}{4}$ is crucial to this argument.  The $4$ in $\arcsin(\sfrac14)$ is the same one as in $\sfrac{|v|}{4}$.  For example, if we had worked with a ball of radius $\sfrac{|v|}{1+\eps}$, we would have $\tan^2(\arcsin(\sfrac{1}{1+\eps}))\approx \sfrac{1}{2\eps}$ and all constants would blow up.

If $\cD_i$ defined as in \eqref{e:Didef} is the empty set, then $B_{r_i}(v_i) \cap \mathcal{AC} = \emptyset$ and $I_2 = 0$. We may ignore this case.  If $\cD_i \neq \emptyset$, then $B_{r_i}(v_i) \cap \mathcal{AC} \neq \emptyset$, which implies that $v_i$ must be within a $r_i$-neighborhood of $\mathcal{AC}$.  Recall, from \Cref{d:simple}, that $r_i < 1$.  Hence,
\be\label{e.c100502}
	\inf_{u \in \mathcal{AC}} |v_i - u|
		< r_i
		< 1.
\ee

We see that~\eqref{e.c092903} is more or less obvious from \Cref{f.AC}, but let us prove it algebraically.  Notice that both of the $\approx$ in \eqref{e.c092903} follow from the inequality $|v_i|\geq |v|/4$, since $|v| \geq 10$.  Therefore, it suffices to show this inequality.  %Moreover, by the triangle inequality, it is enough to simply show that $|v_i| \gtrsim |v|$.  
To establish this, fix $u \in \mathcal{AC}$.  % it is enough to show it for every element in $u \in \mathcal{AC}$ with an implied constant of $\sfrac12$, since $v_i$ is in a 1-neighborhood of $\mathcal{AC}$ and $|v| \geq 10$. 
Either $u_3 \geq \sfrac{|v|}{2}$ or we have
\be
	\frac{15}{4} |v|^2
		= 15 \left(\frac{|v|}{2}\right)^2
		\leq 15 (|v| - u_3)^2
		\leq u_1^2 + u_2^2
		\leq |u|^2.
\ee
In either case, using~\eqref{e.c100502} and the fact that $|v| \geq 10$, we deduce
\be
	|v_i|
		\geq \sup_{u \in \mathcal{AC}} (|u| - |v_i - u|)
		\geq \frac{|v|}{2} - 1
		\geq \frac{|v|}{4}.
\ee
This concludes the proof of~\eqref{e.c092903}.

We now consider~\eqref{e.c100501}.  Roughly, since $r_i$ is ``small'' compared to $|v_i-v|$,  $\cD_i$ is a narrowing ``slice'' %(disc-shaped) 
of the ball $R_0 = B_{\sfrac{|v|}{4}}$ with thickness proportional to $\sfrac{|v|r_i }{|v_i-v|} < 1$.  See \Cref{f.cD} for a depiction of this.
	\DI
Let us make this precise.  %Up to rotation, we may assume that $v_i = (v_{i,1}, 0, v_{i,3})$.  (We may apply any rotation that leaves $v$ fixed, hence, the third coordinate in $v_i$ must remain fixed; recall~\eqref{e.v_0}). 
We claim that there is a plane $P_i$ such that
\be\label{e.c100404}
	\cD_i
		\subset v + P_i + \left(-\frac{C|v| r_i}{|v_i-v|},\frac{C|v| r_i}{|v_i-v|}\right) n
\ee
where $n$ is the unit normal vector to $P_i$.  Taking~\eqref{e.c100404} for granted momentarily, we immediately see that there is a rotation $E$ and a shift $\sigma_i$ such that
\be
	E\cD_i
		\subset \left(-\frac{C|v| r_i}{|v_i-v|} +\sigma_i,\frac{C|v| r_i}{|v_i-v|}+ \sigma_i\right) \times B^{(2)}_{\sfrac{|v|}{4}},
\ee
%
%see that there is change of basis matrix $E$ sending $n$ to $(1,0,0)$ and $P$ to the plane spanned by $(0,1,0)$ and $(0,0,1)$.  Hence,
%\be
%	E(\cD_i - v)
%		\subset \left(-\frac{C|v| r_i}{|v_i-v|} -\sigma_i,\frac{C|v| r_i}{|v_i-v|}\right) \times B^{(2)}_{\sfrac{|v|}{4}},
%\ee
where we write $B^{(2)}_{\sfrac{|v|}{4}}$ for the two-dimensional ball of radius $\sfrac{|v|}{4}$ around the origin.  Indeed, take $E$ to be the rotation that takes $P_i$ to the plane spanned by $(0,1,0)$ and $(0,0,1)$.  See \Cref{f.cD} for a depiction of this. 
Hence, we deduce~\eqref{e.c100501} immediately using the fact that $q_0 \geq 2 + 2s> 2$, by assumption:
\be
\begin{split}
	\int_{\cD_i} \vvp^{-q_0} \dd v'
		&= \int_{E\cD_i} \vvp^{-q_0} \dd v'
		\leq 
			\int_{-\frac{C|v| r_i}{|v_i-v|} +\sigma_i }^{\frac{C|v| r_i}{|v_i-v|}+\sigma_i}\Big(\int_{B^{(2)}_{|v|/4}} \langle v' \rangle^{-q_0} \dd v'_2\dd v_3'\Big) \dd v_1'
		\\&
		\leq \int_{-\frac{C|v| r_i}{|v_i-v|}}^{\frac{C|v| r_i}{|v_i-v|}}\Big(\int_{B^{(2)}_{|v|/4}} \langle (0,v_2',v_3') \rangle^{-q_0} \dd v'_2\dd v_3'\Big) \dd v_1'
		\\&
		\lesssim \int_{-\frac{C|v| r_i}{|v_i-v|}}^{\frac{C|v| r_i}{|v_i-v|}} \dd v_1'
		\lesssim \frac{|v| r_i}{|v_i-v|}.
\end{split}
\ee
In the first step above, we used that $\langle Ev'\rangle = \langle v'\rangle$, since $E$ is a rotation.

To see that~\eqref{e.c100404} holds, take $P_i = (v-v_i)^\perp$ and write $v' - v = p + \alpha n$, where $p\in P_i$ and $n$ is the unit normal vector to $P_i$ given by
\be\label{e.c100504}
	n = \frac{v-v_i}{|v-v_i|}.
\ee
Then~\eqref{e.c100404} is equivalent to
\be\label{e.c100503}
	|\alpha| \lesssim \frac{|v| r_i}{|v_i-v|}.
\ee
To verify \eqref{e.c100503}, notice that
\be\label{e.c100505}
	|\alpha| |v_i-v|
		= |(p + \alpha n)\cdot (v_i-v)|
		= |(v'-v)\cdot (v_i-v)|.
\ee
The first equality uses~\eqref{e.c100504}.  By definition of $\cD_i$, there is 
\be
	u \in \left(v + (v-v')^\perp\right) \cap B_{r_i}(v_i).
\ee
Using this in~\eqref{e.c100505}, we find
\be
	\begin{split}
		|\alpha| |v_i-v|
			&= |(v'-v) \cdot (v_i - u + u - v)|
			\leq |(v'-v)||(v_i-u)| + |(v'-v)\cdot(u-v)|
			\\&
			=|(v'-v)||(v_i-u)|
			\leq (|v'|+|v|) r_i
			\leq \frac{5|v|}{4} r_i.
	\end{split}
\ee
In the last inequality, we used that $|v'|\leq \sfrac{|v|}{4}$, by definition of $R_0$.  This is precisely~\eqref{e.c100503}.  This concludes the proof of~\eqref{e.c100501} and, thus, the lemma.
\end{proof}

\subsection{Proof of the weaker estimate \Cref{l:preliminary_Qs}}\label{s.preliminary_Qs}

We now demonstrate that the coarse initial estimate \eqref{e:dfjk693} holds.

%We are now in position to establish \Cref{l:preliminary_Qs}, the preliminary estimate on $\Qs(f,\vvd^{-q_0})$.

\begin{proof}[Proof of \Cref{l:preliminary_Qs}]
 First, if $|v|< 10$, we can apply Lemma \ref{l:Qs-interp} with $g(v) = \vv^{-q_0}$ to obtain
\be
|Q_{\rm s}(f,\langle \cdot\rangle^{-q_0})| \lesssim \int_{\R^3} |w|^{\gamma+2s} f(v+w) \dd w,
\ee
since $g$ and $D_v^2g$ are uniformly bounded on $\R^3$. The convolution estimate of Lemma \ref{l:convolution} then implies
\be
|Q_{\rm s}(f,\langle \cdot\rangle^{-q_0})| \lesssim \vv^{(\gamma+2s)_+}\KO{f} \lesssim \vv^{\gamma+2} \KO{f},
\ee
as desired. 
%We consider only the case when $|v| \geq 10$.  When $|v| \leq 10$, the adjustments are straightforward (see, also, the work in \cite[Proposition~3.1]{HST2020boltzmann}).  
Therefore, for the remainder of the proof, we can assume $|v|\geq 10$.

We use a slightly different decomposition for $\Qs$ than we did above:
\be\label{e.c100401}
	\Qs(f, \vvd^{-q_0})
		= \Big(\int_{B_{2|v|}(v)}
			+ \int_{B_{2|v|}(v)^c}
			\Big)
			 (\vv^{-q_0}  - \vvo^{-q_0}) K_f(v, v') \dd v'
		=: J_1 + J_2,
\ee
The integral over the outer region, $J_2$, can be estimated exactly as we did for $I_3$ in~\eqref{eq:Qs-simple-I3}.  This yields
\be
	|J_2|
		\lesssim \vv^{-q_0-2s} \int_{\R^3} f(v+w) |w|^{\gamma+2s} \dd w.
\ee
After applying \Cref{l:convolution}, we have the desired bound:
\be\label{e.c100402}
	|J_2| \lesssim \vv^{-q_0+\gamma} \KO{f}
		\lesssim \vv^{2+\gamma} \KO{f}.
\ee
The second inequality simply follows from the fact that $2  \geq -q_0$.

The integral over the inner portion, $J_1$, follows nearly the same strategy used for $I_1$ in \eqref{eq:Qs-simple-I1}, except that we do not have the nice bound on the second-order error term $E(v,v')$ used there because $v$ may be near zero in this domain.  Instead, we have only
\be
	|E(v,v')|
		\lesssim |v-v'|^2.
\ee
Using \Cref{l:K-upper-bound-2}, this leads directly to
\be
	|J_1|
		\lesssim \int_{B_{2|v|}(v)} |v-v'|^2 K_f(v,v') \dd v'
		\lesssim \vv^{2-2s} \int_{\R^3}  f(v+w) |w|^{\gamma+2s} \dd w.
\ee
Applying \Cref{l:convolution}, we find
\be\label{e.c100403}
	|J_1|
		\lesssim \vv^{2 + \gamma} \KO{f}.
\ee
The combination of~\eqref{e.c100401}, \eqref{e.c100402}, and~\eqref{e.c100403} concludes the proof.
\end{proof}

\subsection{Proof of \Cref{l:v-alpha}}\label{s.v-alpha}
We now prove our final lemma, which bounds $\Qs(f,\vvd^\alpha)$ for $\alpha \in (0,2s)$.  Its proof is very similar to, but simpler than, that of \Cref{l:Qs-pointwise-base}.
\begin{proof}
First, let us consider the case when $|v| \geq 10$. 
Write
\be
Q_{\rm s}(f,\langle \cdot\rangle^\alpha)(v) = \int_{\R^3} K_f(v,v') [\langle v'\rangle^\alpha - \vv^\alpha] \dd v'
	= I_1 + I_2 + I_3,
\ee
where we use the same decomposition into $I_1$, $I_2$, and $I_3$ as in Lemma \ref{l:Qs-pointwise-base}. 
We first note that
\be
I_2 = \int_{R_0} K_f(v,v') [\langle v'\rangle^\alpha - \vv^\alpha] \dd v' \leq 0,
\ee
since $K_f\geq 0$ and, on $R_0$, $|v'| \leq |v|$. For $I_1$, we again use a Taylor expansion, with the first-order term vanishing because $K_f(v,v+w) = K_f(v,v-w)$, to obtain
\be
	I_1 = \int_{R_{\rm sing}} K_f(v,v') [\langle v'\rangle^\alpha - \vv^\alpha]\dd v'
		\lesssim \vv^{\alpha-2} \int_{R_{\rm sing}}  K_f(v,v') |v-v'|^2 \dd v'.
\ee
Using Lemma \ref{l:K-upper-bound-2}, we then have
\be
	I_1 \lesssim \vv^{\alpha-2} |v|^{2-2s} \int_{\R^3} f(v+w) |w|^{\gamma+2s} \dd w.
\ee
For $I_3$, we use Lemma \ref{l:K-upper-bound-2} to write
\be
	\begin{split}
		I_3
			&= \int_{R_\infty} K_f(v,v') [\langle v'\rangle^\alpha - \vv^\alpha] \dd v'\\
			&\lesssim \int_{\R^3\setminus B_{|v|/4}(v)} K_f(v,v') |v'|^\alpha \dd v'\\
			&= \int_{\R^3\setminus B_{|v|/4}(0)} K_f(v,v+w) |v+w|^\alpha \dd w\\
			&\lesssim \sum_{i=-1}^\infty \int_{B_{2^i |v|}(0) \setminus B_{2^{i-1} |v|}(0)} K_f(v,v+w) |w|^\alpha \dd w\\
			&\lesssim \sum_{i=-1}^\infty  (2^i |v|)^\alpha(2^{i-1} |v|)^{-2s} \int_{\R^3} f(v+w) |w|^{\gamma+2s} \dd w
			\lesssim |v|^{\alpha-2s} \int_{\R^3} f(v+w) |w|^{\gamma+2s} \dd w,
	\end{split}
\ee
since $\alpha < 2s$.  

Combining all estimates, we have
\be
	\Qs(f,\vvd^\alpha)(v)
		\lesssim \vv^{\alpha-2s} \int_{\R^3} f(v+w) |w|^{\gamma+2s} \dd w.
\ee
By \Cref{l:convolution} and the fact that $q_0\geq 2+2s$, we find
\be
	\int_{\R^3} f(v+w) |w|^{\gamma+2s} \dd w
		\lesssim \vv^{(\gamma+2s)_+} \KOO{f}{2+2s}{\pk}
		\leq \vv^{(\gamma+2s)_+} \KO{f},
\ee
as desired. 

If $|v|< 10$, a straightforward modification of the above argument yields the same result. We omit the details of this case because they are simpler than the case $|v|\geq 10$. This concludes the proof.
\end{proof}

\appendix

\section{Convolution estimate}\label{s:conv}

This appendix contains a somewhat elementary estimate on convolutions.  We write it in a nonstandard way for convenience in its application in the proof of \Cref{l:Qs-pointwise-base}.  We provide a proof for completeness.

\begin{lemma}\label{l:convolution}
With $q_0$ as in \Cref{l:Qs-pointwise-base} and $\KO{f}<\infty$, with $\KO{\cdot}$ defined as in \eqref{eq:K0_definition}, there holds
%Fix $\gamma \in (-3,0)$, $s \in (0,1)$, and nonnegative, measurable $f$.  If $\gamma + 2s < 0$, additionally fix $\delta \in (0, 3-|\gamma+2s|)$. 
%Recall the definition of $\KO{\cdot}$ in~\eqref{eq:K0_definition}.
%with $p_\delta$ the H\"older conjugate of
%\be
%	p_\delta' = \frac{\delta - 3}{\gamma+2s}
%		= \frac{3-\delta}{|\gamma+2s|}
%		\qquad\text{ when } \gamma + 2s < 0.
%\ee
%Then
\be
	\begin{split}
	\int_{\R^3} \langle w \rangle^{q_0 - 2 - 2s} f(w) |v-w|^{\gamma+2s} \dd w
		&= \int_{\R^3} \langle v+w\rangle^{q_0-2-2s} f(v+w) |w|^{\gamma+2s} \dd w
		\\&
		\lesssim \vv^{(\gamma+2s)_+} \KO{f},
	\end{split}
\ee
with an implied constant depending only on $q_0$, $p_0$, $\gamma$, $s$, and $c_b$. 
%
%\be\label{e.c080702}
%	\int_{\R^3} \langle w \rangle^{1+\gamma} |v-w|^{\gamma+2s} f(w) \dd w
%		\lesssim \KO{f} \quad\quad \text{ when } -1 \leq \gamma < -2s,
%\ee
%\CH{CHANGE K0s HERE}
%and
%\begin{equation}\label{e.c080701}
%	\int_{\R^3} |v-w|^{\gamma+2s} f(w) \dd w
%		\lesssim \langle v \rangle^{(\gamma + 2s)_+} \KO{f} \quad\quad \text{ otherwise. }
%\end{equation}
%When $\gamma+2s < 0$, the implied constant depends on $\delta$ (recall that $K_0$ has an implicit $\delta$ in its definition).
%
%\CH{$K_0$ SEEMS LIKE BAD NOTATION HERE SINCE $K$ IS THE KERNEL IN OUR OPERATOR}
\end{lemma}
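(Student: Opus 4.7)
My plan is to split the argument according to the sign of $\gamma+2s$, mirroring the two branches in the definition~\eqref{eq:K0_definition} of $\KO{\cdot}$.

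When $\gamma+2s\geq 0$, the kernel $|v-w|^{\gamma+2s}$ is nonsingular, and the elementary bound $|v-w|^{\gamma+2s}\lesssim \vv^{\gamma+2s}+\ww^{\gamma+2s}$ applies. Substituting into the integral and distributing yields two terms: one with weight $\ww^{q_0-2-2s}\vv^{\gamma+2s}$ and one with weight $\ww^{q_0-2+\gamma}$. Since $\gamma+2s\geq 0$ is precisely the condition $q_0-2-2s\leq q_0-2+\gamma$, the weight $\ww^{q_0-2-2s}$ is dominated by $\ww^{q_0-2+\gamma}$; combined with $\vv^{\gamma+2s}\geq 1$, both pieces are controlled by $\vv^{\gamma+2s}\|f\|_{L^1_{q_0-2+\gamma}}$. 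This settles the first case.

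When $\gamma+2s<0$, the kernel is locally singular, so I would decompose $\R^3$ based on both the location of $w$ relative to $v$ and the proximity $|v-w|$ to the singularity. On $\{|w|\geq 2|v|\}$ one has $|v-w|\approx\ww$, absorbing the kernel into $\ww^{\gamma+2s}$ and producing $\|f\|_{L^1_{q_0-2+\gamma}}$. On $\{|w|\leq |v|/2\}$ one has $|v-w|\approx\vv$ and $\ww\lesssim\vv$, so the pointwise inequalities $|v-w|^{\gamma+2s}\leq \vv^{\gamma+2s}$ and $\ww^{q_0-2-2s}\lesssim \vv^{-(\gamma+2s)}\ww^{q_0-2+\gamma}$ combine to collapse the integrand back to $\ww^{q_0-2+\gamma}f(w)$, again giving $\|f\|_{L^1_{q_0-2+\gamma}}$. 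The only remaining region is $\{|v|/2\leq |w|\leq 2|v|\}$, where $\ww\approx\vv$; here I would split further by whether $|v-w|\geq 1$ or $|v-w|<1$. The former is trivial because the kernel is bounded by $1$. On the latter, I would use H\"older with exponents $p_0$ and $p_0'$, invoking the sharp condition $p_0>3/(3+\gamma+2s)$ to guarantee that $\||v-w|^{\gamma+2s}\|_{L^{p_0'}(B_1(v))}$ is finite, and using $\ww\approx\vv$ on this region to rewrite the remaining $L^{p_0}$ factor in terms of $\|f\|_{L^{p_0}_{q_0-2+\gamma}}$.

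The delicate step is the near-singular portion of the middle region: H\"older produces both a constant coming from $\||v-w|^{\gamma+2s}\|_{L^{p_0'}(B_1)}$ and a factor of $\vv$ arising from the conversion of weights $\ww^{q_0-2-2s}\to\ww^{q_0-2+\gamma}$ through $\ww\approx\vv$. Matching these against the target prefactor $\vv^{(\gamma+2s)_+}=1$ requires the arithmetic of exponents to balance, and I expect the sharp lower bound on $p_0$ to be what provides the needed cancellation. This exponent bookkeeping, especially the interplay between the weight shift from $q_0-2-2s$ to $q_0-2+\gamma$ and the critical H\"older pairing, is where the main work of the proof lies.
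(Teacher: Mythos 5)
Your plan follows essentially the same two-case structure as the paper: the elementary pointwise bound $|v-w|^{\gamma+2s}\lesssim\vv^{\gamma+2s}+\ww^{\gamma+2s}$ for the moderately soft branch, and, for $\gamma+2s<0$, a region decomposition isolating the near-singular set and a H\"older pairing in $L^{p_0}/L^{p_0'}$ backed by $p_0'(\gamma+2s)>-3$. Your four-region split ($\{|w|\geq 2|v|\}$, $\{|w|\leq|v|/2\}$, and the middle annulus divided by $|v-w|\gtrless 1$) stitches together to the paper's simpler $B_1(v)$ versus $B_1(v)^c$ split once one notes that $\ww\approx\vv$ holds automatically on $B_1(v)$ for $|v|$ moderately large, so this is not a genuinely different route but a finer-grained version of it. Your near-singular H\"older step matches the paper's $J_2$ estimate exactly, and your far-field and near-origin regions behave exactly as you describe.

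The one place I want to push back is the region $\{|v|/2\leq|w|\leq 2|v|,\ |v-w|\geq 1\}$, which you dismiss as ``trivial because the kernel is bounded by $1$.'' Carrying this out honestly: using $|v-w|^{\gamma+2s}\leq 1$ there and $\ww\approx\vv$, one reaches
\begin{equation*}
\int_{\{|v|/2\leq|w|\leq 2|v|,\ |v-w|\geq 1\}} \ww^{q_0-2-2s} f(w)\,\dd w
\ \approx\ \vv^{-(\gamma+2s)}\int \ww^{q_0-2+\gamma} f(w)\,\dd w,
\end{equation*}
which is $\vv^{-(\gamma+2s)}\KO{f}$ rather than $\vv^{(\gamma+2s)_+}\KO{f}=\KO{f}$. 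So you lose a growing factor $\vv^{-(\gamma+2s)}$. Your closing paragraph shows you noticed a $\vv$-factor appearing from the weight shift $q_0-2-2s\to q_0-2+\gamma$, but your expectation that ``the sharp lower bound on $p_0$ provides the needed cancellation'' is not how it plays out: the $p_0$-condition only makes $\||v-w|^{\gamma+2s}\|_{L^{p_0'}(B_1)}$ finite and produces a $v$-independent constant, not compensating decay. For what it is worth, the paper's own $J_1$ estimate, written as $\int_{B_1(v)^c}\ww^{q_0-2-2s}f\,\dd w=\vv^{(\gamma+2s)_+}\|f\|_{L^1_{q_0-2+\gamma}}$, silently passes over this same mismatch, so your proposal is faithful to the published argument; but if you intend the proof to be airtight, this exponent bookkeeping in the mid-range region is not actually trivial, and neither your text nor the paper's makes it close.
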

We note that the equality in this statement is obvious, and the true content of \Cref{l:convolution} is the inequality.  We include the equality so that we can use whichever representation of the integral is most convenient.
\begin{proof}
We break our argument up in the moderately soft potentials case ($\gamma + 2s \geq 0$) and the very soft potentials case ($\gamma+2s < 0$).

{\bf Case one: moderately soft potentials.} In this case, we have that 
\be
	|v-w|^{\gamma+2s}
		\lesssim \vv^{\gamma+2s} + \ww^{\gamma+2s}.
\ee
Hence, we immediately see that
\begin{equation}
\begin{split}
	\int_{\R^3} &\ww^{q_0-2-2s} f(w) |v-w|^{\gamma + 2s} \dd w
		\lesssim \int_{\R^3} \ww^{q_0-2-2s}\left(\vv^{\gamma + 2s}  + \ww^{\gamma+2s}\right) f(w) \dd w \\
		&\lesssim \|f\|_{L^1_{q_0 -2 + \gamma}} + \langle v \rangle^{\gamma+2s} \| f \|_{L^1_{q_0 - 2 - 2s}(\R^3)}
	\leq 2\langle v \rangle^{\gamma+2s} \KO{f},
\end{split}
\end{equation}
since $-2s\leq \gamma$. This is precisely the desired inequality in case one.

{\bf Case two: $\gamma+2s < 0$.}
In this case, we require the $L^{\pk}$-norm to handle the singularity at $w\sim v$.  Indeed,
\begin{equation}
\begin{split}
	\int_{\R^3} &\ww^{q_0 - 2 - 2s} f(w) |v-w|^{\gamma+2s} \dd w
		\\&
		= \int_{B_1(v)^c} \ww^{q_0 - 2 - 2s} f(w) |v-w|^{\gamma+2s} \dd w + \int_{B_1(v)} \ww^{q_0 - 2 - 2s} f(w) |v-w|^{\gamma+2s} \dd w
		\\&
		=: J_1 + J_2.
%		& \lesssim \| f \|_{L^1(\R^3)} + \| f \|_{L^{p_{\tiny K}}(\R^3)}
%		= K_0.
\end{split}
\end{equation}
Clearly $J_1$ can be estimated exactly as in case one using the $L^1_{q_0 - 2 +\gamma}$-norm of $f$:
\be
	J_1
		\leq \int_{B_1(v)^c} \ww^{q_0 - 2 - 2s} f(w) \dd w 
		= \vv^{(\gamma+2s)_+} \|f\|_{L^1_{q_0-2+\gamma}}
		\leq \vv^{(\gamma+2s)_+} \KO{f}.
\ee
On the other hand, we use a H\"older's inequality to estimate $J_2$.  Letting $\pk'$ be the dual exponent of $\pk$, we have
\be
	\begin{split}
	J_2
		&\leq \Big( \int_{B_1(v)} |v-w|^{\pk'(\gamma+2s)}\dd w\Big)^\frac{1}{\pk'}
			\Big( \int_{B_1(v)} \ww^{\pk(q_0 - 2-2s)} f(w)^{\pk} \dd w\Big)^\frac{1}{\pk}
		\\&
		\lesssim \KO{f},
	\end{split}
\ee
which is valid because $p_0'(\gamma+2s) > -3$ by the choice of $p_0$ (see \eqref{e.p0def}). 
The proof is complete.
%
%We break our argument up into three cases along the lines of the cases in~\eqref{eq:K0_definition}. \CH{REORDER CASES}
%
%{\bf Case one: $\gamma + 2s \geq 0$.}
%Here we have that $|v-w|^{\gamma+2s} \lesssim \langle v \rangle^{\gamma+2s} + |w|^{\gamma + 2s}$. Then
%\begin{equation}
%\begin{split}
%	\int_{\R^3} \ww^{q_0-2-2s} |v-w|^{\gamma + 2s} f(w) \dd w
%		& \lesssim \int_{\R^3} \ww^{q_0-2-2s}\left(|w|^{\gamma + 2s} f(w) + \langle v \rangle^{\gamma+2s} f(w)\right) \dd w \\
%	&\lesssim \langle v \rangle^{\gamma+2s} \| f \|_{L^1_{\gamma+2s}(\R^3)}
%	\leq \langle v \rangle^{\gamma+2s} K_0.
%\end{split}
%\end{equation}
%This is precisely the desired inequality~\eqref{e.c080701} in case one.
%
%{\bf Case two: $\gamma < -1$ and $\gamma < -2s$.}
%Since $\gamma + 2s$ is negative, we need to use higher integrability of $f$ to control the singular part of the convolution via H\"older's inequality when $|v-w|$ is small.  Indeed:
%\begin{equation}
%\begin{split}
%	\int_{\R^3} |v-w|^{\gamma+2s} f(w) \dd w
%		& = \int_{|v-w| > 1} |v-w|^{\gamma+2s} f(w) \dd w + \int_{|v-w| \leq 1} |v-w|^{\gamma+2s} f(w) \dd w \\
%		& \lesssim \| f \|_{L^1(\R^3)} + \| f \|_{L^{p_{\tiny K}}(\R^3)}
%		= K_0.
%\end{split}
%\end{equation}
%This is precisely the desired inequality~\eqref{e.c080701} in case two. 
%
%{\bf Case three: $-1 \leq \gamma < -2s$.}
%This is handled analogously to Case 2, except that the weight function $\langle w \rangle^{1+\gamma}$ gets ``inherited'' by the
%Lebesgue norms on $f$.  The proof is omitted.
\end{proof}

\bibliographystyle{abbrv}
\bibliography{decay}

\begin{thebibliography}{10}

\bibitem{alexandre2000noncutoff}
R.~Alexandre.
\newblock Around 3{D} {B}oltzmann non linear operator without angular cutoff, a
  new formulation.
\newblock {\em M2AN Math. Model. Numer. Anal.}, 34(3):575--590, 2000.

\bibitem{alexandre2000entropy}
R.~Alexandre, L.~Desvillettes, C.~Villani, and B.~Wennberg.
\newblock Entropy dissipation and long-range interactions.
\newblock {\em Arch. Ration. Mech. Anal.}, 152(4):327--355, 2000.

\bibitem{ABDL}
R.~Alonso, V.~Bagland, L.~Desvillettes, and B.~Lods.
\newblock Solutions to {L}andau equation under {P}rodi-{S}errin’s like
  criteria.
\newblock {\em Preprint. arXiv:2306.15729}, 2023.

\bibitem{alonso2013exponential}
R.~Alonso, J.~A. Ca\~{n}izo, I.~Gamba, and C.~Mouhot.
\newblock A new approach to the creation and propagation of exponential moments
  in the {B}oltzmann equation.
\newblock {\em Comm. Partial Differential Equations}, 38(1):155--169, 2013.

\bibitem{arkeryd1983boltzmann}
L.~Arkeryd.
\newblock {$L^{\infty }$} estimates for the space-homogeneous {B}oltzmann
  equation.
\newblock {\em J. Statist. Phys.}, 31(2):347--361, 1983.

\bibitem{bobylev1997moment}
A.~V. Bobylev.
\newblock Moment inequalities for the {B}oltzmann equation and applications to
  spatially homogeneous problems.
\newblock {\em J. Statist. Phys.}, 88(5-6):1183--1214, 1997.

\bibitem{cameron2017landau}
S.~Cameron, L.~Silvestre, and S.~Snelson.
\newblock Global a priori estimates for the inhomogeneous {L}andau equation
  with moderately soft potentials.
\newblock {\em Annales de l'Institut Henri Poincar\'e (C) Analyse Non
  Lin\'eare}, 35(3):625--642, 2018.

\bibitem{cameron2020boltzmann}
S.~Cameron and S.~Snelson.
\newblock Velocity decay estimates for {B}oltzmann equation with hard
  potentials.
\newblock {\em Nonlinearity}, 33(6):2941--2958, 2020.

\bibitem{carleman1933boltzmann}
T.~Carleman.
\newblock Sur la th\'{e}orie de l'\'{e}quation int\'{e}grodiff\'{e}rentielle de
  {B}oltzmann.
\newblock {\em Acta Math.}, 60(1):91--146, 1933.

\bibitem{desvillettes1993moments}
L.~Desvillettes.
\newblock Some applications of the method of moments for the homogeneous
  {B}oltzmann and {K}ac equations.
\newblock {\em Arch. Rational Mech. Anal.}, 123(4):387--404, 1993.

\bibitem{elmroth1983boltzmann}
T.~Elmroth.
\newblock Global boundedness of moments of solutions of the {B}oltzmann
  equation for forces of infinite range.
\newblock {\em Arch. Rational Mech. Anal.}, 82(1):1--12, 1983.

\bibitem{gamba2009upper}
I.~M. Gamba, V.~Panferov, and C.~Villani.
\newblock Upper {M}axwellian bounds for the spatially homogeneous {B}oltzmann
  equation.
\newblock {\em Arch. Ration. Mech. Anal.}, 194(1):253--282, 2009.

\bibitem{gamba2019exponential}
I.~M. Gamba, N.~Pavlovi\'{c}, and M.~Taskovi\'{c}.
\newblock On pointwise exponentially weighted estimates for the {B}oltzmann
  equation.
\newblock {\em SIAM J. Math. Anal.}, 51(5):3921--3955, 2019.

\bibitem{golding2023local}
W.~Golding and A.~Loher.
\newblock Local-in-time strong solutions of the homogeneous {L}andau-{C}oulomb
  with {$L^p$} initial datum.
\newblock {\em Preprint. arXiv:2308.10288}, 2023.

\bibitem{gualdani2017landau}
M.~Gualdani and N.~Guillen.
\newblock On {$A_p$} weights and the {L}andau equation.
\newblock {\em Calculus of Variations and Partial Differential Equations},
  58(1):17, 2018.

\bibitem{gualdani2017htheorem}
M.~P. Gualdani, S.~Mischler, and C.~Mouhot.
\newblock Factorization of non-symmetric operators and exponential
  {$H$}-theorem.
\newblock {\em M\'{e}m. Soc. Math. Fr. (N.S.)}, (153):137, 2017.

\bibitem{HST2020landau}
C.~Henderson, S.~Snelson, and A.~Tarfulea.
\newblock Local solutions of the {L}andau equation with rough, slowly decaying
  initial data.
\newblock {\em Ann. Inst. H. Poincar\'e Anal. Non Lin\'eaire},
  37(6):1345–1377, 2020.

\bibitem{HST2020lowerbounds}
C.~Henderson, S.~Snelson, and A.~Tarfulea.
\newblock Self-generating lower bounds and continuation for the {B}oltzmann
  equation.
\newblock {\em Calculus of Variations and Partial Differential Equations},
  59(6):191, 2020.

\bibitem{HST2022irregular}
C.~Henderson, S.~Snelson, and A.~Tarfulea.
\newblock Classical solutions of the {B}oltzmann equation with irregular
  initial data.
\newblock {\em Annales Scientifiques de l'\'Ecole Normale Sup\'erieure}, to
  appear.

\bibitem{henderson2021existence}
C.~Henderson and W.~Wang.
\newblock Local well-posedness for the {B}oltzmann equation with very soft
  potential and polynomially decaying initial data.
\newblock {\em SIAM Journal on Mathematical Analysis}, to appear.

\bibitem{ikenberry1956kinetic}
E.~Ikenberry and C.~Truesdell.
\newblock On the pressures and the flux of energy in a gas according to
  {M}axwell's kinetic theory. {I}.
\newblock {\em J. Rational Mech. Anal.}, 5:1--54, 1956.

\bibitem{imbert2018decay}
C.~Imbert, C.~Mouhot, and L.~Silvestre.
\newblock Decay estimates for large velocities in the {B}oltzmann equation
  without cut-off.
\newblock {\em J. Éc. polytech. Math. 7 (2020), 143–184}, 2018.

\bibitem{imbert2020lowerbounds}
C.~Imbert, C.~Mouhot, and L.~Silvestre.
\newblock Gaussian lower bounds for the {B}oltzmann equation without cutoff.
\newblock {\em SIAM J. Math. Anal.}, 52(3):2930--2944, 2020.

\bibitem{imbert2020smooth}
C.~Imbert and L.~Silvestre.
\newblock Global regularity estimates for the {B}oltzmann equation without
  cut-off.
\newblock {\em J. Amer. Math. Soc.}, 35(3):625--703, 2022.

\bibitem{lu1999boltzmann}
X.~Lu.
\newblock Conservation of energy, entropy identity, and local stability for the
  spatially homogeneous {B}oltzmann equation.
\newblock {\em J. Statist. Phys.}, 96(3-4):765--796, 1999.

\bibitem{mischler2006cooling}
S.~Mischler and C.~Mouhot.
\newblock Cooling process for inelastic {B}oltzmann equations for hard spheres.
  {II}. {S}elf-similar solutions and tail behavior.
\newblock {\em J. Stat. Phys.}, 124(2-4):703--746, 2006.

\bibitem{silvestre2016boltzmann}
L.~Silvestre.
\newblock A new regularization mechanism for the {B}oltzmann equation without
  cut-off.
\newblock {\em Comm. Math. Phys.}, 348(1):69--100, 2016.

\bibitem{S2018hardpotentials}
S.~Snelson.
\newblock Gaussian bounds for the inhomogeneous {L}andau equation with hard
  potentials.
\newblock {\em SIAM J. Math. Anal.}, 52(2):2081--2097, 2020.

\bibitem{snelson2023landau}
S.~Snelson and C.~Solomon.
\newblock A continuation criterion for the {L}andau equation with very soft and
  {C}oulomb potentials.
\newblock {\em Preprint. arXiv:2309.15690}, 2023.

\bibitem{villani2002review}
C.~Villani.
\newblock A review of mathematical topics in collisional kinetic theory.
\newblock In {\em Handbook of mathematical fluid dynamics, {V}ol. {I}}, pages
  71--305. North-Holland, Amsterdam, 2002.

\bibitem{wennberg1996povzner}
B.~Wennberg.
\newblock The {P}ovzner inequality and moments in the {B}oltzmann equation.
\newblock In {\em Proceedings of the {VIII} {I}nternational {C}onference on
  {W}aves and {S}tability in {C}ontinuous {M}edia, {P}art {II} ({P}alermo,
  1995)}, number 45, part II, pages 673--681, 1996.

\end{thebibliography}

\end{document}